\numberwithin{equation}{section}
\newenvironment{myitemize}{%
\begin{list}{$\bullet$}%
 	{%
	\setlength{\itemsep}{0.4em}%
	\setlength{\topsep}{0.5em}%
	\setlength\leftmargin{2.45em}%
	\setlength\labelwidth{2.05em}%
	\setlength{\labelsep}{0.4em}%
	}%
	}%
{\end{list}}
\renewenvironment{itemize}{
\begin{myitemize}}%
{\end{myitemize}}
\newcommand*{\myfnsymbolsingle}[1]{%
  \ensuremath{%
    \ifcase#1
    \or 
      \dagger
    \else 
      \@ctrerr  
    \fi
  }%
}   
\newalphalph{\myfnsymbolmult}[mult]{\myfnsymbolsingle}{}
\theoremstyle{plain}
\newtheorem{theorem}{Theorem}[section]
\newtheorem{proposition}[theorem]{Proposition}
\newtheorem{corollary}[theorem]{Corollary}
\theoremstyle{definition}
\newtheorem{remark}[theorem]{Remark}
\newtheorem{example}[theorem]{Example}
\renewcommand{\@secnumfont}{\bfseries}
\renewcommand\section{\@startsection{section}{1}%
\z@{.7\linespacing\@plus\linespacing}{.5\linespacing}%
{\large\bfseries\scshape\centering}}
\renewcommand\subsection{\@startsection{subsection}{2}%
  \z@{.5\linespacing\@plus.7\linespacing}{-.5em}%
  {\bfseries\scshape}}
\DeclarePairedDelimiter\abs{\lvert}{\rvert} 
\let\oldabs\abs
\def\abs{\@ifstar{\oldabs}{\oldabs*}}
\DeclarePairedDelimiterX{\norm}[1]{\lVert}{\rVert}{#1} 
\let\oldnorm\norm
\def\norm{\@ifstar{\oldnorm}{\oldnorm*}}
\DeclarePairedDelimiterX{\ceil}[1]{\lceil}{\rceil}{#1} 
\let\oldceil\ceil
\def\ceil{\@ifstar{\oldceil}{\oldceil*}}
\DeclarePairedDelimiterX{\floor}[1]{\lfloor}{\rfloor}{#1} 
\let\oldfloor\floor
\def\floor{\@ifstar{\oldfloor}{\oldfloor*}}
\newcommand\sfM{\mathsf M}
\newcommand\sfP{\mathsf P}
\newcommand\sfQ{\mathsf Q}
\newcommand\sfT{\mathsf T}
\newcommand\sfW{\mathsf W}
\newcommand\sfX{\mathsf X}
\newcommand\sfe{\mathsf e}
\newcommand\sfh{\mathsf h}
\newcommand\sft{\mathsf t}
\newcommand\sfw{\mathsf w}
\newcommand{\cQ}{{\ensuremath{\mathcal Q}} }
\newcommand{\cR}{{\ensuremath{\mathcal R}} }
\newcommand{\bbE}{{\ensuremath{\mathbb E}} }
\newcommand{\bbP}{{\ensuremath{\mathbb P}} }
\renewcommand{\P}{\mathbb{P}} 
\newcommand{\E}{\mathbb{E}} 
\newcommand{\1}{\mathbbm{1}} 
\newcommand{\ind}{\1}
\renewcommand{\i}{\mathrm{i}} 
\newcommand{\schur}{\mathrm{s}} 
\newcommand{\sumtwo}[2]{\sum_{\substack{#1 \\ #2}}} 
\newcommand{\sumthree}[3]{\sum_{\substack{#1 \\ #2 \\ #3}}} 
\newcommand{\id}{\mathrm{id}}
\newcommand{\rsk}{\mathsf{RSK}}
\newcommand\drsk{\mathsf{dRSK}}
\newcommand{\dtasep}{\mathsf{dTASEP}}
\newcommand{\tasep}{\mathsf{TASEP}}
\DeclareMathOperator{\sh}{\mathrm{sh}}
\DeclareMathOperator{\ledge}{\mathrm{l-edge}}
\renewcommand{\emptyset}{\varnothing}
\newcommand{\Z}{\mathbb{Z}} 
\newcommand{\R}{\mathbb{R}} 
\newcommand{\C}{\mathbb{C}} 
\renewcommand{\epsilon}{\varepsilon}
\renewcommand{\rho}{\varrho}
\renewcommand{\phi}{\varphi}
\DeclareMathSymbol{\widehatsym}{\mathord}{largesymbols}{"62}
\renewcommand{\hat}{\widehat}
\renewcommand{\tilde}{\widetilde}
\newcommand{\doubletilde}[1]{{%
  \mathpalette\double@tilde{#1}%
}}
\newcommand{\double@tilde}[2]{%
  \sbox\z@{$\m@th#1\tilde{#2}$}%
  \ht\z@=.9\ht\z@
  \tilde{\box\z@}%
}
\newenvironment{myenumerate}{%
\renewcommand{\theenumi}{(\roman{enumi})}%
\renewcommand{\labelenumi}{\theenumi}%
\begin{list}{\labelenumi}
	{%
	\setlength{\itemsep}{0.4em}%
	\setlength{\topsep}{0.5em}%
	\setlength\leftmargin{2.45em}%
	\setlength\labelwidth{2.05em}%
	\setlength{\labelsep}{0.4em}%
	\usecounter{enumi}%
	}%
	}%
{\end{list}
}
\renewenvironment{enumerate}{
\begin{myenumerate}}%
{\end{myenumerate}}
\lbrace\begin{array}{@{}l@{}}}%
\newsavebox{\mybox}\newsavebox{\mysim}
\newcommand{\distras}[1]{%
  \savebox{\mybox}{\hbox{\kern3pt$\scriptstyle#1$\kern3pt}}%
  \savebox{\mysim}{\hbox{$\sim$}}%
  \mathbin{\overset{#1}{\kern\z@\resizebox{\wd\mybox}{\ht\mysim}{$\sim$}}}%
}
\begin{document}

\title[Non-intersecting path constructions for TASEP with inhomogeneous rates]{\large Non-intersecting path constructions \\ for TASEP with inhomogeneous rates \\ and the KPZ fixed point}

\author[E.~Bisi]{Elia Bisi}
\address{Elia Bisi, Technische Universit\"at Wien \\
Institut f\"ur Stochastik und Wirtschaftsmathematik \\
E 105-07 \\
Wiedner Hauptstra{\ss}e 8-10 \\
1040 Wien, Austria}
\email{elia.bisi@tuwien.ac.at}

\author[Y.~Liao]{Yuchen Liao}
\address{Yuchen Liao, Mathematics Institute,
University of Warwick,
Coventry CV4 7AL,
UK}
\email{yuchen.liao@warwick.ac.uk}

\author[A.~Saenz]{Axel Saenz}
\address{Axel Saenz, Mathematics Department,
Oregon State University,
Corvallis,
OR 97331,
USA}
\email{saenzroa@oregonstate.edu}

\author[N.~Zygouras]{Nikos Zygouras}
\address{Nikos Zygouras, Mathematics Institute,
University of Warwick,
Coventry CV4 7AL,
UK}
\email{n.zygouras@warwick.ac.uk}

\begin{abstract}
We consider a discrete-time TASEP, where each particle jumps according to Bernoulli random variables with particle-dependent and time-inhomogeneous parameters.
We use the combinatorics of the Robinson--Schensted--Knuth correspondence and certain intertwining relations to express the transition kernel of this interacting particle system in terms of ensembles of weighted, non-intersecting lattice paths and, consequently, as a marginal of a determinantal point process.
We next express the joint distribution of the particle positions as a Fredholm determinant, whose correlation kernel is given in terms of a boundary-value problem for a discrete heat equation.
The solution to such a problem finally leads us to a representation of the correlation kernel in terms of random walk hitting probabilities, generalizing the formulation of 
Matetski, Quastel and Remenik (Acta Math., 2021) to the case of both particle- and time-inhomogeneous rates.
The solution to the boundary value problem in the fully inhomogeneous case appears with a finer structure than in the homogeneous case.
\end{abstract}

\maketitle
\tableofcontents

\section{Introduction}
\label{sec:intro}

\subsection{Background and literature}
\label{subsec:background}

The Kardar--Parisi--Zhang (KPZ) universality class is a large class of stochastic systems with highly correlated components that exhibit a similar statistical asymptotic behavior under space-time rescaling.
They include $(1+1)$-dimensional random growth models, interacting particle systems, eigenvalues of random matrices, and stochastic partial differential equations.
These models can be characterized by means of a space-time `height function', which typically features random non-Gaussian fluctuations depending on the initial height profile.
For certain specific initial configurations (e.g.\ `step' and `flat'), the one-point distributions are given by the Tracy--Widom laws (first introduced in the random matrix literature~\cite{tracyWidom94, tracyWidom96}) and the multi-point distributions are given by Airy processes.
Such precise asymptotics have been obtained so far only for a few integrable models, whose rich algebraic structure lead to exact formulas that are suitable for asymptotic analysis; see e.g.~\cite{borodinGorin16, zygouras22}.
Among these integrable models, the most accessible ones can be described in terms of determinantal measures: popular examples are the corner growth model, the polynuclear growth model, last passage percolation, and various types of exclusion processes.

It is conjectured that all KPZ models, under the so-called \emph{1:2:3-scaling}, converge to a universal, scale-invariant Markov process, known as the \emph{KPZ fixed point}.
Such a limiting process was constructed in~\cite{matetskiQuastelRemenik21} for the continuous-time Totally Asymmetric Simple Exclusion Process ($\tasep$), a prototypical interacting particle system on the integer line.
The transition probabilities of $\tasep$ were first shown to admit determinantal formulas in~\cite{schutz97}, using the coordinate Bethe ansatz.
Based on these formulas, \cite{sasamoto05, borodinFerrariPrahoferSasamoto07} showed that the $\tasep$ evolution is encoded by a determinantal point process; consequently, for arbitrary initial configurations, the multi-point distribution of $\tasep$ particles was given as a Fredholm determinant, whose kernel is implicitly characterized by a biorthogonalization problem.
This problem was solved in~\cite{borodinFerrariPrahoferSasamoto07} in the case of flat (2-periodic) initial configuration for the particles.
The solution to the problem of biorthogonalization for general initial configuration was given in~\cite{matetskiQuastelRemenik21}, where the kernel was expressed in terms of a functional of a random walk and its hitting times to a curve encoding the (arbitrary) initial configuration.
The KPZ fixed point was then constructed by taking a Donsker type scaling limit, under which random walk and associated hitting times turn into Brownian motion and corresponding  hitting times.

Since the seminal contribution of~\cite{matetskiQuastelRemenik21}, numerous works have considered other, not only determinantal, KPZ models with general initial configurations, obtaining similar Fredholm determinant formulas and, in certain cases, also convergence to the KPZ fixed point.
Here is a non-exhaustive list of such results.
A system of one-sided reflected Brownian motions was studied in~\cite{nicaQuastelRemenik20}.
Two variations of discrete-time $\tasep$ with geometric and Bernoulli jumps were considered in~\cite{arai20}.
Convergence to the KPZ fixed point was proved in~\cite{QuastelSarkar23} {in the case of finite range asymmetric exclusion processes and the KPZ equation, for certain classes of initial conditions}.
In~\cite{matetskiRemenik22} it was shown that the method of~\cite{matetskiQuastelRemenik21} can cover a general class of models whose multipoint distributions possess a Sch\"utz type determinantal formula.

In the present work, we consider a discrete-time $\tasep$ with inhomogeneous jump probabilities.
We provide an explicit, step-by-step route from the very definition of the model to a Fredholm determinant representation of the joint distribution of the particle positions in terms of random walk hitting times.
This is, to the best of our knowledge, the first such formulation for an interacting particle system with both particle- and time-inhomogeneous rates.
As discussed above, a result of this type, for the continuous-time and homogeneous-rate $\tasep$, was first obtained in~\cite{matetskiQuastelRemenik21}.
However, our approach differs from that of~\cite{matetskiQuastelRemenik21}:
Firstly, our starting point is not a Sch\"utz type formula, but rather the combinatorial structure of the integrable model.
Moreover, instead of solving the biorthogonalization problem, we map to ensembles of non-intersecting paths.
We work directly with the corresponding determinantal processes, exploiting some special features that emerge through mapping to path ensembles and from the expression of the path weights via local operators.
We hope this perspective can shed additional light on the structure of the KPZ fixed point formulas and may also be useful in different settings, in particular for other particle systems that can be characterized by intertwining relations.
In the next subsection we present our result and discuss our approach in detail.

\subsection{Our result and approach}
\label{subsec:ourContribution}

In all variations of $\tasep$, particles occupy sites of $\Z$ and, according to a stochastic mechanism, perform jumps in the same direction (to the right, by convention).
The interaction between particles consists in the exclusion rule: no two particles may occupy the same position at any given time.
Therefore, if a particle attempts to jump to an occupied site, the jump is suppressed.
There are several possible stochastic mechanisms inducing the dynamics.
In this article, as a working example, we consider a version of $\tasep$ where:
\begin{enumerate}
\item there is a finite number {$N\geq 2$} of particles;
\item the dynamics evolves in discrete time;
\item jump sizes are given by independent Bernoulli random variables;
\item the expected jump size may depend both on the particle ({\it particle-inhomogeneous rates}) and on time
 ({\it time-inhomogenous rates});
\item particle positions are updated sequentially from right to left.
\end{enumerate}
Let us define the version of $\tasep$ we are concerned with.
For $k=1,\dots,N$, denote by $Y_k(t) \in \mathbb{Z}$ the position of the $k$-th particle from the right at time $t \in \mathbb{Z}_{\geq 0}$.
Therefore, the configuration encoding the positions of the particles, at time $t$, will be
\[
Y(t) = (Y_1(t) > Y_2(t) > \cdots > Y_N(t) ),
\]
with {arbitrary} initial configuration 
\[
Y(0) = \bm{y} = (y_1 > y_2 > \cdots > y_N).
\]

Let $p_t$, $t\geq 1$, and $q_k$, $1\leq k\leq N$, be positive parameters.
The random dynamics is then given by sequential updates from right to left, 
i.e.\ from the particle labeled $1$ to the particle labeled $N$, as follows:
Assume we have already updated the position of the $(k-1)$-th particle at time $t$ (with $k\geq 2$).
Then, the $k$-th particle updates its position at time $t$ by jumping one step to the right with probability $p_tq_k/(1+p_tq_k)$, assuming that the neighboring site on its right 
is not occupied by the $(k-1)$-th particle; otherwise, the particle remains in its current position.
We call this model \emph{discrete-time (Bernoulli) $\tasep$} and abbreviate it as $\dtasep$.

To state our main result, we first define the following kernels of operators from $\Z$ to $\Z$.
For $1\leq j\leq k$ and $0\leq r\leq t$, let
\begin{align}
\mathcal{S}_{[j,k],(r,t]}(x,y)&:= \oint_{\Gamma_0}\frac{\mathrm{d}z}{2\pi\mathrm{i}z} \frac{\prod_{\ell=j}^{k}(q_\ell-z)\cdot \prod_{\ell=r+1}^{t}(1+p_\ell z)}{z^{x-y+k-j+1}}, \label{eq:Sintegral}\\ 
\bar{\mathcal{S}}_{[j,k],(r,t]}(x,y)&:= -\prod_{\ell=j}^{k}(q_\ell-1)\cdot\oint_{\Gamma_{\bm{q}}}\frac{\mathrm{d}z}{2\pi\mathrm{i}z} \frac{z^{y-x+k-j+1}}{\prod_{\ell=j}^{k}(q_\ell-z)\cdot \prod_{\ell=r+1}^{t}(1+p_\ell z)}, \label{eq:Sbarintegral}
\end{align}
where $\Gamma_0$ and $\Gamma_{\bm{q}}$ are simple closed contours {with counterclockwise orientation}, enclosing $0$ and $\{q_i\}_{i=1}^N$ as the only poles, respectively.
{The kernels $\mathcal{S}_{[j,k],(r,t]}$ are compositions of some random walk transition kernels and $\bar{\mathcal{S}}_{[j,k],(r,t]}$ are dual versions of $\mathcal{S}_{[j,k],(r,t]}$ with contribution coming from the poles $\{q_i\}_{i=1}^N$ instead of $0$; see Proposition \ref{prop:S-Sbar_localOp} for the precise statement.
The random walk/path interpretations of the kernels will be explained through \S2-4.}

Let $S$ be a geometric random walk (as defined in~\eqref{eq:transitionRW}).
{Let
\[
\mathrm{epi}(\bm{y}) := \{ (i,x)\colon {0\leq i\leq N-1}, \, x> y_{i+1} \}
\]
be the (discrete) strict epigraph of the (discrete) curve $(i,y_{i+1})_{0\leq i\leq N-1}$.
For $n\leq N$, let $\tau$ be the first time $\leq n$ at which the random walk $S$ hits $\mathrm{epi}(y)$}:
\begin{equation}
\tau:=\min\{m\in \{0,\dots,n\}\colon S_m>y_{m+1}\}.
\end{equation}
The kernel encoding the initial configuration {$\bm{y}=(y_1>\dots>y_N)$} is then expressed in terms of $\bar{\mathcal{S}}$ and $\tau$ as the expectation
\begin{equation}\label{eq:SbarEpi}
\bar{\mathcal{S}}_{[1,n],(r,t]}^{\mathrm{epi}(\bm{y})}(x,y):= 
\frac{\mathbb{E}_{S_0=x}[\bar{\mathcal{S}}_{[\tau+1,n],(r,t]}(S_\tau,y)\1_{\tau<n}]}{\prod_{\ell=1}^{n}(q_\ell-1)}.
\end{equation}
{Let also} $Q_i(x,y):= q_i^{y-x}\1_{\{y< x\}}$ for $1\leq i\leq N$ and $x,y\in \mathbb{Z}$ and $Q_{(m,n]}:=Q_{m+1}\circ\cdots\circ Q_n$ for $n>m$.

{Finally, for two operators $A$ and $B$ with kernels $A(x,y)$ and $B(x,y)$, $x,y\in\Z$, we define the composition operator $A\circ B$ through the kernel $(A\circ B) (x,z) :=\sum_{y\in \Z} A(x,y) B(y,z)$.}
With these notations, our main result is the following.
	
\begin{theorem}[Multipoint distributions of $\dtasep$ with particle- and time-inhomogeneous rates]\label{thm:corker_hitting}
Let $(Y(t))_{t\geq 0}=(Y_1(t)>\dots>Y_N(t))_{t\geq 0}$ be the locations of $N$ particles evolving according to the $\dtasep$ dynamics with parameters $\{p_t\}_{t\geq 1}$ and $\{q_k\}_{k=1}^N$ and initial configuration $Y(0)=\bm{y}$.
Assume that $q_kp_t<1$ for all $k,t$ and $q_k>1$ for all $k$.
Then, the joint distribution of particle locations at time $t$ is given by the Fredholm determinant
\begin{equation}\label{eq:multipoint}
\mathbb{P}\left(\bigcap_{i=1}^m \{Y_{k_i}(t)\geq s_i\} \;\middle|\; {Y(0)=\bm{y}} \right) =\det(I-\chi_s K\chi_s)_{\ell^2(\{k_1,\dots,k_m\}\times \mathbb{Z})},
\end{equation}
for any $m\in \mathbb{N}$, $1\leq k_1<k_2<\cdots<k_m\leq N$ and	$(s_1,\dots,s_m)\in {\mathbb{R}^m}$, where $\chi_s(k_i,x):= \1_{x<s_i}$ and $K$ is the kernel
\begin{equation}
\label{eq:corker_hitting}
K(m,x;n,x')=- Q_{(m,n]}{(x,x')}\1_{n>m} + \mathcal{S}_{[1,m],(0,t]}\circ 	\bar{\mathcal{S}}_{[1,n],(0,t]}^{\mathrm{epi}(\bm{y})} (x,x').
\end{equation}
\end{theorem}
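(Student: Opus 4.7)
\medskip

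\noindent\textbf{Proof plan.}
The plan is to run the pipeline outlined in the introduction in four stages, each of which specialises a known general mechanism to the doubly inhomogeneous setting.
First, I would use RSK/intertwining combinatorics to lift the $\dtasep$ dynamics to an ensemble of weighted non-intersecting lattice paths, following the program indicated in Sections~2--3 of the paper. Concretely, the sequential right-to-left update with Bernoulli jumps of rate $p_t q_k/(1+p_t q_k)$ matches the row-insertion version of RSK applied to a matrix of independent Bernoullis, so the joint law of $(Y_1(t),\dots,Y_N(t))$ given $Y(0)=\bm y$ can be written as a marginal of a Gibbs-type measure on Gelfand--Tsetlin patterns (or equivalently interlacing configurations of paths). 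The path weights factorise into local transitions given by $Q_k$ and dual transitions whose contour-integral representations are exactly $\mathcal S_{[j,k],(r,t]}$ and $\bar{\mathcal S}_{[j,k],(r,t]}$.

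Second, I would apply the Lindström--Gessel--Viennot lemma together with the Eynard--Mehta/Borodin--Rains machinery to convert the weighted non-intersecting path ensemble into a determinantal point process. This produces a Fredholm determinant formula of the form \eqref{eq:multipoint} with a correlation kernel of the schematic shape $-Q_{(m,n]}\1_{n>m}+\mathcal S_{[1,m],(0,t]}\circ \Psi_n$, where the functions $\Psi_n$ are characterised by a \emph{biorthogonalisation problem} against the right edge particles $Q_{(\cdot,N]}$ evaluated on the initial data $\bm y$. At this stage the answer is correct but implicit; the content of the theorem is to identify $\Psi_n=\bar{\mathcal S}^{\mathrm{epi}(\bm y)}_{[1,n],(0,t]}$.

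Third, and this is where I expect the main technical obstacle, I would solve the biorthogonalisation problem by recasting it as a boundary value problem for a discrete heat-type equation. The key is to recognise that, by virtue of the contour representations \eqref{eq:Sintegral}--\eqref{eq:Sbarintegral}, the operators $\mathcal S$ and $\bar{\mathcal S}$ intertwine with the step operators $Q_k$ in a way that turns the biorthogonality constraint into a discrete Cauchy problem on the wedge $\{(i,x)\colon x\leq y_{i+1}\}$, with boundary data prescribed on the graph of the initial configuration $\bm y$. In the fully homogeneous case ($p_t\equiv p$, $q_k\equiv q$), such a problem was solved in \cite{matetskiQuastelRemenik21}; here the extra particle- and time-dependence forces a parameter-indexed family of random walks (one step uses parameter $q_{\tau+1}$, not a fixed $q$) and consequently the hitting functional must track which label $\tau$ the walk $S$ lands on. This is precisely the ``finer structure'' advertised in the abstract and gives rise to the $\bar{\mathcal S}_{[\tau+1,n],(0,t]}(S_\tau,y)$ factor in \eqref{eq:SbarEpi}, with the product $\prod_{\ell=1}^n(q_\ell-1)$ absorbing the normalisation of the geometric steps of $S$.

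Finally, with the explicit hitting representation in hand, I would verify directly that $\bar{\mathcal S}^{\mathrm{epi}(\bm y)}_{[1,n],(0,t]}$ satisfies both the discrete heat equation (in the interior of the wedge, where $\tau=n$ forces the indicator to vanish and a telescoping contour argument applies) and the prescribed boundary condition on $\{x=y_{i+1}\}$ (by strong Markov at $\tau$, using $\bar{\mathcal S}_{[j,k]}$ as a reproducing kernel at the stopping label). Uniqueness of the solution to the boundary value problem, established in the biorthogonalisation step, then identifies this hitting expression with the implicit $\Psi_n$, yielding \eqref{eq:corker_hitting} and completing the proof. The hypotheses $q_k p_t<1$ and $q_k>1$ are used to ensure absolute convergence of the contour integrals and positivity of the random walk transition kernel, respectively.
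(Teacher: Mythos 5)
Your high-level outline follows the paper's structure (combinatorics $\to$ non-intersecting paths $\to$ determinantal process $\to$ boundary value problem $\to$ hitting time), but there are two concrete mismatches and one genuine gap.

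On the combinatorial side: the paper uses the \emph{dual column} Robinson--Schensted--Knuth correspondence ($\drsk$), not row insertion. This is not merely notational: column insertion applied to a $\{0,1\}$-matrix is exactly the variant that makes the \emph{left edge} $(\lambda^{(k)}_k)_k$ of the $\sfP$-tableau evolve autonomously with the $\dtasep$ sequential right-to-left updates (equations~\eqref{eq:leftEdgeRecursion}--\eqref{eq:dTASEP_recursion}). Row insertion encodes a different particle dynamics. You should also note that the paper deliberately does \emph{not} solve the biorthogonalisation problem abstractly: the path ensemble picture makes the Gram-type matrix $\sfM$ in~\eqref{eq:Mmatrix} \emph{upper triangular} (Proposition~\ref{prop:modifiedCorKer} and Remark~\ref{rem:M_upperTriang}), so $\sfM^{-1}$ is explicit and the $\Phi^{(n)}_i$ are constructed directly, not produced as solutions of an implicit biorthogonality constraint.

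The genuine gap is in your final step. You propose to ``verify directly'' that $\bar{\mathcal{S}}^{\mathrm{epi}(\bm{y})}_{[1,n],(0,t]}$ satisfies the discrete heat equation and boundary condition ``by strong Markov at $\tau$, using $\bar{\mathcal S}_{[j,k]}$ as a reproducing kernel.'' This is precisely the place where the homogeneous-rate argument of Matetski--Quastel--Remenik does not transfer. Their verification relies on the fact that the candidate solutions are polynomials in the spatial variable, so it suffices to check the identity on the half-line $z_2\le y_n$ where both sides are sums of positive terms. Remark~\ref{rem:non-polynomiality} in the paper shows that polynomiality fails the moment the $q_k$ are not all equal, and the strong Markov step alone does not close the argument (one gets the path formula~\eqref{eq:Gfunction_sum} only for $z_2\le y_n$; extending to all $z_2$ requires tracking exactly the inclusion--exclusion cancellations between $Q$ and $Q^\dagger$). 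The paper's substitute is the double backward induction in Proposition~\ref{prop:generalG}, split into Cases~1, 2.1, 2.2, 2.3, where the operator $\bar Q_{[j,k]}$ switches between its $Q$ and $(-1)^{k-j}Q^\dagger$ expressions depending on the sign of $z_1-y$, and the terms cancel telescopically. Your proposal acknowledges the obstacle but does not supply an argument that would replace this. Finally, you also omit the ordering assumption $q_1<q_2<\cdots$ (needed for absolute convergence of the operator compositions and the triangularity of $\sfM$) and the analytic continuation step that removes it at the end of the proof of Theorem~\ref{thm:corker_hitting}.
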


{We remark that the condition $q_k p_t<1$ for all $k,t$ is equivalent to assuming that, for all $k,t$, the $k$-th particle attempts its $t$-th jump with probability $(q_k p_t)/(1+q_k p_t)<\frac{1}{2}$.
The case when all the jump probabilities are greater than $1/2$ can be analyzed using particle-hole duality (see for example~\cite{ferrari04}).
Our theorem does not cover the case where some of the jump rates are $>\frac{1}{2}$ and others are $<\frac{1}{2}$.
This seems to be a common restriction, appearing for example also in \cite[Assumption 1.1]{matetskiRemenik22}.}

The assumption $q_k>1$ is innocuous, as we will explain in Remark~\ref{rem:qless1}.

\vskip 1mm

The original work~\cite{matetskiQuastelRemenik21} dealt with homogeneous rates, while~\cite{arai20} considered time-inhomogeneous rates only.
In their recent work~\cite{matetskiRemenik22}, Matetski and Remenik expressed interest in the case of particle-inhomogeneous rates, considering it ``meaningful from a physical point of view''.
However, they only considered this general case in the preliminary part of their analysis, and did not solve the corresponding biorthogonalization problem\footnote{{In a recently appeared preprint~\cite{MatetskiRemenik23}, they have extended} the explicit biorthogonalization scheme developed in~\cite{matetskiRemenik22} to an inhomogeneous setting that allows to study, for instance, a discrete-time $\tasep$ where some particles update sequentially and some update in parallel.}.

Let us mention that shape functions and hydrodynamic limits of inhomogeneous $\tasep$ and corner growth model have been studied since the 1990s; see~\cite{SeppalainenKrug99, Emrah16, EmrahJanjigianSeppalainen21}.
Furthermore, multipoint formulas for $\tasep$ with particle- and time-inhomogeneous rates have been obtained e.g.\ in~\cite{BorodinPeche08, KnizelPetrovSaenz19, johanssonRahman22, IwaoMotegiScrimshaw22}.
However, Theorem~\ref{thm:corker_hitting} expresses, for the first time in the case of a particle- and time-inhomogeneous $\tasep$, the joint law of particle locations in terms of random walk hitting times.
As we explain later in the introduction, we obtain these formulas in a quite different way from~\cite{matetskiQuastelRemenik21, matetskiRemenik22}, overcoming the technical difficulties that such a generalization presents.

\vskip 1mm

The route we follow to prove Theorem \ref{thm:corker_hitting} is also new, in various aspects, compared to the methods used so far, as we now discuss.
The remarkable idea of expressing the multipoint law in terms of a random walk hitting problem is due to~\cite{matetskiQuastelRemenik21}.
Before that, the standard representation was in terms of a Fredholm determinant involving two families of biorthogonal functions $\{\Psi_k^n(\cdot), \Phi_k^n(\cdot) \}_{1\leq k \leq n }$, where $\{\Psi_k^n(\cdot)\}$ are typically given explicitly, but $\{\Phi_k^n(\cdot)\}$ are to be determined by solving the biorthogonalisation problem with respect to $\{\Psi_k^n(\cdot)\}$.
This formulation was first established in~\cite{sasamoto05, borodinFerrariPrahoferSasamoto07}, where the case of flat (2-periodic) initial configuration for continuous-time $\tasep$ (which corresponds to a biorthogonalisation problem for shifted Charlier polynomials) was solved. 

Starting from the determinantal formula of~\cite{borodinFerrariPrahoferSasamoto07} for $\tasep$ with general initial configuration, \cite{matetskiQuastelRemenik21} were able to solve the biorthogonalisation problem in terms of a hitting time expectation of the form~\eqref{eq:SbarEpi}.
At the core of this lay an expression of the family $\{\Phi_k^n(\cdot)\}$ in terms of a terminal-boundary value problem for a discrete heat equation.
The fact that the family $\{\Phi_k^n(\cdot)\}$ obtained by this method solves the biorthogonal problem with respect to $\{\Psi_k^n(\cdot)\}$ was achieved via a direct check.
As explained in~\cite{matetskiQuastelRemenik21} (see also \cite{Remenik22}), the intuition that led to such a guess was based on two points.
Firstly, thanks to the ``skew time reversibility'' of $\tasep$, the one-point distribution of $\tasep$ with homogeneous parameters and a general initial profile is equal to the multi-point distribution of $\tasep$ starting from the so-called \emph{step initial configuration} (which represented the first solvable example of a particle system~\cite{johansson00}).
Secondly, the multi-point distribution of $\tasep$ with the step initial condition has been known since~\cite{prahoferSpohn02} to possess the Fredholm determinant expression  
\begin{align*}
\mathbb{P}\Bigg(\bigcap_{i=1}^m \{Y_{k_i}(t)\geq s_i\}\Bigg)
=\det\big(I-K_t^{(k_m)}( I-Q^{k_1-k_m}\chi'_{s_1} Q^{k_2-k_1}\chi'_{s_2}\cdots Q^{k_m-k_{m-1}}\chi'_{s_m}) \big)_{\ell^2(\mathbb{Z})},
\end{align*}
where $Q^n$ is the $n$-step transition kernel of a homogeneous, geometric random walk, $K_t^{(n)}$ is the kernel of the one-point distribution of the model and $\chi'_{s}(k_j,x):=\ind_{x>s_j}$.
The expression $Q^{k_1-k_m}\chi'_{s_1} Q^{k_2-k_1}\chi'_{s_2}\cdots Q^{k_m-k_{m-1}}\chi'_{s_m}$ can then be interpreted as the probability that a homogeneous, geometric random walk with transition kernel $Q$ lies above $s_1,\dots,s_m$ at times $k_1,\dots,k_m$.
We remark that, due to the aforementioned skew time reversibility of $\tasep$, the levels $s_1,\dots,s_m$ are related to the initial (rather than final) positions of the particles.

\vskip 1mm

Our approach to Theorem \ref{thm:corker_hitting} differs from the above, even though our guiding principle has been the already mentioned terminal-boundary value problem and a desire to better understand its foundations.
We do not start from the determinantal formulas; instead, we work with the combinatorial foundations of discrete $\tasep$ and its links, via intertwinings and Markov functions, to determinantal point processes.
We first compute the transition kernel of $\dtasep$ by using the \emph{column insertion, dual} version of the Robinson--Schensted--Knuth ($\rsk$) correspondence, which we abbreviate as $\drsk$.
As it turns out, this combinatorial algorithm, viewed from a dynamical standpoint, encodes the $\dtasep$ dynamics as a projection.
Furthermore, the transition kernel of $\dtasep$ intertwines with the transition kernel of the evolution of the shape of the tableaux generated by the $\drsk$ dynamics and, thus, can be written as the latter kernel conjugated by an `intertwining' kernel; see~\eqref{eq:kerQ_LambdaInverse}.
The general link between $\tasep$ dynamics and $\rsk$ correspondences is of course well known; see for example~\cite{diekerWarren08} and references therein.
In particular, our approach can be regarded as a time-inhomogeneous generalization of~\cite{diekerWarren08} 
(see Case B: `Bernoulli jumps with blocking'). 

Next, we interpret all the kernels appearing in our representation of the $\dtasep$ transition kernel in terms of weights of ensembles of non-intersecting lattice paths; see~\eqref{eq:pathQ2}.
For our later goals, it is important to remark that these weights can be expressed in terms of one-step (local) transition operators.
 
The Lindstr\"om--Gessel--Viennot theorem leads, then, to a determinantal formulation for all these kernels,
thus allowing us to view the transition distribution of $\dtasep$ as a marginal of a determinantal point process; see~\eqref{eq:Q_marginalDPP}.
Using standard methods in the theory of determinantal point processes (as in~\cite{borodinFerrariPrahoferSasamoto07, johansson03}), we express the fixed-time joint distribution of $\dtasep$ particles as a Fredholm determinant; see Proposition~\ref{prop:modifiedCorKer}.
The correlation kernel {of the Fredholm determinant} involves the local operators encoding the transition weights of the path ensemble as well as the inverse of a matrix $M$; see~\eqref{eq:modifiedCorKer}-\eqref{eq:Mmatrix}.
The geometric picture that we obtain through the non-intersecting path ensembles leads us to conclude that $M$ is upper triangular and, therefore, explicitly invertible.
This crucial aspect leads to the boundary-terminal value problem (Proposition~\ref{prop:modifiedBVP}), which we next solve to arrive at the random walk hitting formula~\eqref{eq:SbarEpi}.
This task turns out to be more challenging than in~\cite{matetskiQuastelRemenik21}, since, in the case of particle-inhomogeneous rates, the solution is not spanned by polynomials; see Remark~\ref{rem:non-polynomiality}.
In particular, we develop a very careful double induction argument (see the proof of Proposition~\ref{prop:generalG}) {that involves} some subtle cancellations of inclusion-exclusion type that take place in the formulas.

\vskip 1mm

As outlined in \S\ref{subsec:background}, the KPZ fixed point was constructed in~\cite{matetskiQuastelRemenik21} as the 1:2:3 scaling limit, at large time and length scales, of the homogeneous continuous-time $\tasep$.
The present work paves the way to construct analogous processes from particle systems with variable, fast/slow, rates.
We leave this task for future work.
 
\subsection{Outline of the {article}} 
{In~\S\ref{sec:dynamics} we start by presenting some combinatorial objects and, in particular, the dual, column $\rsk$ algorithm ($\drsk$) and its link with discrete-time $\tasep$ ($\dtasep$); we also obtain an expression for the transition probability kernel of $\dtasep$ via an intertwining relation.
In~\S\ref{sec:path-constr}
we re-express the transition kernel of $\dtasep$ in terms of weights of ensembles of non-intersecting 
paths and determinantal point processes, thus arriving at an initial Fredholm determinant formula.
In~\S\ref{sec:RWformulas} we prove our main result, Theorem~\ref{thm:corker_hitting}, first formulating a terminal-boundary value problem and then solving it, to arrive at the hitting time representation for the correlation kernel of $\dtasep$ with inhomogeneous rates.
To solve the terminal-boundary value problem, we first use path representations for certain subsets of $\mathbb{Z}$, and then extend the solution to the whole space via a subtle double induction argument (see Proposition~\ref{prop:generalG}).}

\section{$\tasep$ dynamics and combinatorics}
\label{sec:dynamics}

{In this section we present the main combinatorial tools that we need for the analysis of the $\dtasep$ dynamics.
In~\S\ref{subsec:partitions}, we introduce a few standard algebraic combinatorial objects.
In~\S\ref{subsec:RSK}, we describe the dual, column $\rsk$ ($\drsk$) correspondence and its main properties.
In~\S\ref{subsec:RSK-TASEP}, we discuss the link between $\dtasep$ and $\drsk$.
Finally, in~\S\ref{subsec:shape} we establish certain intertwining relations and deduce a preliminary expression for the $\dtasep$ transition kernel.}

\subsection{Partitions, tableaux, and Schur polynomials}
\label{subsec:partitions}

A \emph{partition} $\lambda$ of $n\geq 0$ is a sequence $\lambda = (\lambda_1\geq \lambda_2 \geq \dots)$ of weakly decreasing non-negative integers, called \emph{parts} of $\lambda$, such that $\abs{\lambda}:=\sum_{i\geq 1} \lambda_i =n$.
If $\lambda$ is a partition of $n$, we write $\lambda \vdash n$ and refer to $n$ as the \emph{size} of $\lambda$.
We will also say that $\lambda$ is a partition without referring to its size.
We will denote by $\emptyset$ the only partition of $0$.
Any partition of $n$ can be graphically represented as a \emph{Young diagram} of size $n$, i.e.\ a collection of $n$ cells arranged in left-justified rows, with $\lambda_i$ cells in the $i$-th row.
Every such a cell can be identified with a pair $(i,j) \in \Z_{\geq 1}^2$ with row index $i$ and column index $j$; thus, we may alternatively write $\lambda$ as the set of such pairs:
\begin{equation}
\label{eq:partitionSet}
\lambda = \{(i,j) \colon i\geq 1, \, 1\leq j\leq \lambda_i \} .
\end{equation}
The \emph{conjugate partition} of $\lambda$, which we denote by $\lambda^{\top}$, is defined by setting $\lambda_i^{\top}$ to be the number of $k\geq 1$ such that $\lambda_k\geq i$; conjugating a partition corresponds to transposing the associated Young diagram.
The \emph{length} of $\lambda$ is the number of its non-zero parts; since it clearly coincides with the first part of the conjugate partition $\lambda^{\top}$, we denote it by $\lambda^{\top}_1$.

{We define the (discrete) Weyl chamber as
\begin{equation}
\label{eq:weylChamber}
\sfW_n := \left\{\bm{y} = (y_1,\dots,y_n)\in \Z^n\colon y_1\geq y_2\geq \cdots \geq y_n \right\}.
\end{equation}
Throughout this section, elements of $\sfW_n$ will be implicitly taken to have non-negative components and, thus, to be integer partitions of length $\leq n$.
In later sections, we will drop this assumption and consider elements of $\sfW_n$ with possibly negative components.}

For any two partitions $\lambda$ and $\mu$, we write $\mu\subseteq \lambda$ if $\mu_i\leq \lambda_i$ for all $i\geq 1$, or equivalently if $\mu$ is a subset of $\lambda$, viewing the partitions as sets as in~\eqref{eq:partitionSet}.
A \emph{skew Young diagram} $\lambda/\mu$ is the set difference between two partitions $\lambda$ and $\mu$ such that $\mu\subseteq \lambda$.
The \emph{size} of $\lambda/\mu$, denoted by $\abs{\lambda/\mu}$, is the number of its cells, which equals $\abs{\lambda} - \abs{\mu}$.
If $\lambda/\mu$ has at most one cell per column, we call it \emph{horizontal strip}; if $\lambda/\mu$ has at most one cell per row, we call it \emph{vertical strip}.
We say that two partitions $\mu$ and $\lambda$ \emph{interlace}, and write $\mu\prec \lambda$, if $\lambda /\mu$ is a horizontal strip, or equivalently if $\lambda_i \geq \mu_i \geq \lambda_{i+1}$ for all $i\geq 1$.

A \emph{Young tableau} $\sfT=\{\sfT_{i,j} \colon (i,j)\in \lambda\}$ of \emph{shape} $\lambda$ is a filling of a Young diagram $\lambda$ {with elements of an alphabet $A\subseteq \Z_{\geq 1}$}.
We write $\sh(\sfT)$ for the shape of $\sfT$.
The \emph{transpose} of $\sfT$, denoted by $\sfT^{\top}$, is the tableau of shape $\lambda^{\top}$ that is obtained from $\sfT$ by exchanging its rows with its columns.
A Young tableau $\sfT$ is called \emph{column-strict} if its entries weakly increase along rows and strictly increase down columns.
Every column-strict Young tableau $\sfT$ of shape $\lambda$ can be alternatively represented as a sequence of interlacing partitions:
\begin{equation}
\label{eq:tableauInterlacing}
\sfT \equiv \big({\emptyset =:} \lambda^{(0)} \prec \lambda^{(1)} \prec \lambda^{(2)} \prec \cdots\big) ,
\end{equation}
where each $\lambda^{(k)}$ is the shape of the Young tableau obtained from $\sfT$ by removing all the cells containing numbers $>k$.
By the column-strict property of $\sfT$, we have $\lambda^{(k)} \in \sfW_k$ for all $k$, and the partitions interlace.
Clearly, $\lambda^{(k)}$ coincides with $\lambda$ for $k$ large enough; therefore, one can think of the sequence as finite, by stopping it at any $\lambda^{(k)}$ such that $\lambda^{(k)}=\lambda$.
See Figure~\ref{fig:tableaux} for an example of a column-strict Young tableau.
Similarly, a Young tableau $\sfT$ is called \emph{row-strict} if its rows are strictly increasing and its columns are weakly increasing, or equivalently if $\sfT^{\top}$ is column-strict.

\begin{figure}
\[
\young(111233,2245,44)
\quad\equiv\quad
\left( {\emptyset \prec} (3) \prec (4,2) \prec (6,2) \prec (6,3,2) \prec (6,4,2) \right)
\]
\caption{On the left-hand side, an example of a column-strict Young tableau of shape $(6,4,2)$ and left edge $(3,2)$.
On the right-hand side, its corresponding sequence of interlacing partitions.
}
\label{fig:tableaux}
\end{figure}

We define the \emph{left edge}\footnote{This terminology comes from the triangular arrangement of $\big(\lambda^{(0)} \prec \lambda^{(1)} \prec \lambda^{(2)} \prec \cdots\big)$ as a Gelfand--Tsetlin pattern.
{Later on, we will also consider the left edge of a triangular point process, as defined in~\S\ref{subsec:DPP} and visualized in Figure~\ref{fig:paths}.}} of a column-strict tableau $\sfT = \big(\lambda^{(0)} \prec \lambda^{(1)} \prec \lambda^{(2)} \prec \cdots\big)$ to be the partition
\begin{equation}
\label{eq:leftEdge}
\ledge(\sfT) := \big(\lambda^{(1)}_1\geq \lambda^{(2)}_2\geq \cdots\big) .
\end{equation}
Notice that, as all entries of the $k$-th row of $\sfT$ are $\geq k$ by the column-strict property, $\lambda^{(k)}_k$ is simply the number of $k$'s in the $k$-th row of $\sfT$.
See again Figure~\ref{fig:tableaux} for an example.

Finally, we give two equivalent, combinatorial definitions of Schur polynomials.
Let $n\geq 1$ and $\lambda \in \sfW_n$.
The \emph{Schur polynomial} in $n$ variables of shape $\lambda$ is given by
\begin{equation}
\label{eq:schur}
\schur_{\lambda}(x_1,\dots,x_n) 
:= \sum_{\substack{\lambda^{(0)} \prec \cdots \prec \lambda^{(n)} \colon \\ \lambda^{(n)}=\lambda}}
\prod_{k=1}^n x_k^{\lvert\lambda^{(k)}/\lambda^{(k-1)}\rvert}
= \sum_{\sfT\colon \sh(\sfT)=\lambda} \,
\prod_{k=1}^n x_k^{\lvert (i,j)\colon \sfT_{i,j}=k\rvert} .
\end{equation}
The first sum is taken over any sequence $\big(\lambda^{(0)} \prec \cdots \prec \lambda^{(n)}\big)$ of interlacing partitions such that $\lambda^{(k)} \in \sfW_k$ for all $k$ and $\lambda^{(n)}=\lambda$.
The second sum is taken over any column-strict Young tableau $\sfT$ of shape $\lambda$ in the alphabet $\{1,\dots,n\}$.
It is also convenient to define $\schur_{\lambda}(x_1,\dots,x_n) :=0$ whenever the length of $\lambda$ exceeds $n$.

\subsection{Dual column $\rsk$}
\label{subsec:RSK}

As we will see, the $\tasep$ dynamics we are concerned with are encoded by a certain variation of the \emph{Robinson--Schensted--Knuth correspondence} ($\rsk$), a celebrated combinatorial algorithm~\cite{knuth70, fulton97, stanley99}.

All the $\rsk$ variations map a matrix (input) to a pair of Young tableaux (output).
They can be differentiated based on two key factors:
\begin{itemize}
\item the input may be a non-negative integer matrix or a $\{0,1\}$-matrix (in the latter case, one usually talks about \emph{dual} $\rsk$);
\item the algorithm may be based on the so-called \emph{row insertion} or \emph{column insertion}.
\end{itemize}
According to these factors, one obtains four variations of $\rsk$: row $\rsk$, column $\rsk$, dual row $\rsk$, and dual column $\rsk$.
For our purposes we need the latter variation, dual column $\rsk$, which we abbreviate as $\drsk$.
We introduce it here and refer to~\cite[A.4.3]{fulton97} for further details.

It is convenient to first define, for $j\geq 1$, a mapping
\begin{equation}
\label{eq:insertionIntoColumn}
\mathcal{I}_j \colon (\sfT,x) \mapsto (\sfT',y) ,
\end{equation}
which should be interpreted as the insertion of a number $x$ into the $j$-th column of a tableau $\sfT$.
Here:
\begin{enumerate}
\item\label{item:inputTableau} $\sfT$ is an input column-strict Young tableau $\sfT$ of shape $\lambda$, with $\lambda_1 \geq j-1$;
\item\label{item:inputInteger} $x$ is an input positive integer such that, if $j>1$ and $\lambda^{\top}_j = \lambda^{\top}_{j-1}$, then $x\leq \max_{i} \sfT_{i,j}$;
\item\label{item:outputTableau} $\sfT'$ is an output column-strict Young tableau;
\item\label{item:outputInteger} $y$ is either an output positive integer or a `stop symbol' $\bigtimes$.
\end{enumerate}
The mapping works as follows. For fixed $j\geq 1$, 
if all entries $\sfT_{i,j}$, $1\leq i\leq \lambda_j^{\top}$, of the $j$-th column of $\sfT$ are $<x$ (so that, by~\ref{item:inputInteger}, we have $\lambda^{\top}_j < \lambda^{\top}_{j-1}$), then a new cell $(\lambda^{\top}_j+1,j)$ containing $x$ is added to the column, thus yielding a new column-strict tableau $\sfT'$; the outputs are then $(\sfT',\bigtimes)$.
Otherwise, let $i$ be the smallest integer such that $x \leq \sfT_{i,j} =: y$; define $\sfT'$ to be the same tableau as $\sfT$ except for the $(i,j)$-entry $\sfT'_{i,j}:=x$; the outputs are then $(\sfT',y)$.

We now define the \emph{column insertion} algorithm as a composition of several mappings of the form~\eqref{eq:insertionIntoColumn}.
Consider the sequence
\[
(\sfT,x)=:(\sfT^{(0)},y^{(0)}) \xmapsto{\mathcal{I}_1} (\sfT^{(1)},y^{(1)}) \xmapsto{\mathcal{I}_2} \cdots \xmapsto{\mathcal{I}_k} (\sfT^{(k)},y^{(k)})=: (\sfT',\bigtimes) \, ,
\]
where $k$ is the smallest integer such that $y^{(k)}=\bigtimes$.
Notice that, by construction, every $y^{(j-1)}$, $1\leq j\leq k$, \emph{can} be inserted into the $j$-th column of $T^{(j-1)}$, in the sense that hypothesis~\ref{item:inputInteger} above is satisfied.
We then set $\sfT'$ to be the outcome of the column insertion of $x$ into the tableau $\sfT$.
Clearly, if $\sfT$ is of size $n$, then $\sfT'$ will be of size $n+1$.
See Figure~\ref{fig:columnInsertion} for a graphical representation of the column insertion algorithm.

\begin{figure}
\newcommand{\one}{\color{blue} 1}
\newcommand{\two}{\color{blue} 2}
\newcommand{\three}{\color{blue} 3}
\newcommand{\four}{\color{blue} 4}
\newcommand{\five}{\color{blue} 5}
\[
\left(\,\young(112,25,\three) \, , {\color{red} 3}\right)
\xmapsto{\mathcal{I}_1}
\left(\,\young(112,2\five,3) \, ,{\color{red} 3}\right)
\xmapsto{\mathcal{I}_2}
\left(\,\young(112,23,3)\, ,{\color{red} 5}\right)
\xmapsto{\mathcal{I}_3}
\left(\,\young(112,235,3)\, ,\bigtimes \right)
\]
\caption{Example of column insertion of an integer $x$ into a tableau $\sfT$.
We start with the pair $(\sfT,x)$, on the left-hand side, and apply the mappings $\mathcal{I}_1, \mathcal{I}_2, \dots$ until we get a pair of the form $(\sfT',\bigtimes )$.
The tableau $\sfT'$ is then the outcome of the column insertion.
At the $j$-th step, the red number is to be inserted into the $j$-th column: either it replaces the blue number (first two steps) or it is inserted in a new cell at the end of the column (third step).
In the former case, the blue number becomes the red one at the next step; in the latter case, a `stop symbol' $\bigtimes$ is returned and the procedure stops.}
\label{fig:columnInsertion}
\end{figure}

We now construct the $\drsk$ algorithm.
Given an input matrix $w=\{w_{i,j} \colon 1\leq i\leq n, \, 1\leq j\leq N \}$ with entries in $\{0,1\}$, we define a sequence
\begin{equation}
\label{eq:drskDynamics}
(\emptyset,\emptyset) =: (\sfP(0),\sfQ(0))
\mapsto
(\sfP(1),\sfQ(1))
\mapsto\dots\mapsto
(\sfP(n),\sfQ(n))=:(\sfP,\sfQ)
\end{equation}
of Young tableaux pairs starting from the pair of empty tableaux and ending at the $\drsk$ output pair $(\sfP,\sfQ)$ (for an example, see Figure~\ref{fig:drsk}).
Essentially, each $\sfP(i)$ is constructed by column inserting into $\sfP(i-1)$ the column indices $j$ that correspond to ones in the $i$-th row of $w$, whereas each $\sfQ(i)$ records the cells that are added in the construction of $\sfP(i)$.
More precisely, for all $i=1,\dots,n$, given $(\sfP(i-1),\sfQ(i-1))$, the next pair $(\sfP(i),\sfQ(i))$ is obtained as the last element of the sequence
\[
\begin{split}
(\sfP(i-1),\sfQ(i-1)) =: (\sfP(i,0),\sfQ(i,0))
&\mapsto
(\sfP(i,1),\sfQ(i,1)) \\
&\mapsto\dots\mapsto
(\sfP(i,N),\sfQ(i,N)) =: (\sfP(i),\sfQ(i)) ,
\end{split}
\]
where, for $j=1,\dots,N$:
\begin{itemize}
\item if $w_{i,j}=0$, then $\sfP(i,j)=\sfP(i,j-1)$ and $\sfQ(i,j)=\sfQ(i,j-1)$;
\item if $w_{i,j}=1$, then
\begin{itemize}
\item $\sfP(i,j)$ is the tableau obtained by column inserting $j$ into $\sfP(i,j-1)$, and
\item $\sfQ(i,j)$ is obtained from $\sfQ(i,j-1)$ by adding a cell, filled with $i$, at the same location where a cell was added in the column insertion of $j$ into $\sfP(i,j-1)$.
\end{itemize} 
\end{itemize}
By construction, for all $i,j$, $\sfP(i,j)$ and $\sfQ(i,j)$ are Young tableaux of the same shape.
Each $\sfP(i,j)$ is column-strict.
Moreover, it is not difficult to see that each $\sfQ(i,j)$ is row-strict.

\begin{figure}
\centering
\begin{minipage}{0.2\textwidth}
\centering
\[
w=
\begin{pmatrix}
1 &0 &1 &1 \\
0 &1 &1 &0 \\
1 &1 &0 &1 
\end{pmatrix}
\]
\end{minipage}
\begin{minipage}{0.8\textwidth}
\centering
\begin{align*}
\qquad\quad\emptyset=:\sfP(0) &&\mapsto &&\sfP(1)=\young(1,3,4) &&\mapsto &&\sfP(2)=\young(13,24,3) &&\mapsto &&\sfP(3)=\young(113,224,3,4) \\
\emptyset:=\sfQ(0) &&\mapsto &&\sfQ(1)=\young(1,1,1) &&\mapsto &&\sfQ(2)=\young(12,12,1) &&\mapsto &&\sfQ(3)=\young(123,123,1,3)
\end{align*}
\end{minipage}
\caption{An example of the $\drsk$ correspondence, constructed as in~\eqref{eq:drskDynamics}.
An input $\{0,1\}$-matrix $w$ yields a sequence of tableaux pairs, which terminates at the $\drsk$ output pair $(\sfP,\sfQ)=(\sfP(3),\sfQ(3))$.
}
\label{fig:drsk}
\end{figure}

In the next theorem we summarize the properties of this mapping that are useful for our purposes.
They are all either immediate from the construction or easy to prove, and can be visualized in the example of Figure~\ref{fig:drsk}.
We refer e.g.\ to~\cite[A.4.3]{fulton97} for a proof.
\begin{theorem}
\label{thm:drsk}
The dual column Robinson--Schensted--Knuth correspondence $\drsk\colon w \mapsto (\sfP,\sfQ)$ is a bijection between a matrix with entries in $\{0,1\}$ and a pair $(\sfP,\sfQ)$ of Young tableaux of the same shape such that $\sfP$ is column-strict and $\sfQ$ is row-strict.
If the input matrix is $n\times N$, then $\sfP$ is in the alphabet $\{1,\dots ,N\}$ and $\sfQ$ is in the alphabet $\{1,\dots ,n\}$, so one can identify
\[
\sfP = \big(\lambda^{(0)} \prec \cdots \prec \lambda^{(N)}\big)
\qquad\text{and}\qquad
\sfQ^{\top} = \big(\mu^{(0),\top} \prec \cdots \prec \mu^{(n),\top}\big) \, ,
\]
where $\lambda^{(k)}, \mu^{(k),\top} \in \sfW_k$ for all $k$.
Referring to the sequence of pairs~\eqref{eq:drskDynamics} that defines $\drsk$, we then have 
\begin{equation}
\label{eq:drskShapeDynamics}
\mu^{(i)}= \sh(\sfP(i)) \qquad
\text{for } i=1,\dots,n.
\end{equation}
Moreover, we have
\begin{align}
\label{eq:drskTypeCol}
\sum_{i=1}^n w_{ij} 
&= \big\lvert\lambda^{(j)}/\lambda^{(j-1)}\big\rvert
&&\text{for all } j=1,\dots,N , \\
\label{eq:drskTypeRow}
\sum_{j=1}^N w_{ij}
&=\big\lvert\mu^{(i)}/\mu^{(i-1)}\big\rvert
&&\text{for all } i=1,\dots,n .
\end{align}
\end{theorem}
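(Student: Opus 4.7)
The plan is to verify each assertion by induction on the sequence of partial outputs $(\sfP(i,j),\sfQ(i,j))$ that builds $(\sfP,\sfQ)$, and to exhibit an explicit inverse mapping to establish the bijection. The only non-mechanical ingredient required is the column bumping lemma (the vertical-strip property below); the rest is essentially bookkeeping.

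First I would show by induction that each intermediate $\sfP(i,j)$ is column-strict and that the admissibility condition \textit{``}$x\leq\max_i\sfT_{i,j}$ when $\lambda^{\top}_j=\lambda^{\top}_{j-1}$\textit{''} holds at every call of $\mathcal{I}_\ell$ in the cascade defining a column insertion. When $w_{i,j}=1$, the column insertion of $j$ proceeds through columns $\ell=1,2,\dots$: at each stage the carried value $y^{(\ell-1)}$ either lands in a new cell at the bottom of column $\ell$ (where column-strictness is automatic because $y^{(\ell-1)}$ strictly exceeds all entries of that column) or replaces the unique smallest entry of column $\ell$ that is $\geq y^{(\ell-1)}$, a move that manifestly preserves column-strictness and ensures that the value passed to $\mathcal{I}_{\ell+1}$ meets the admissibility hypothesis. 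Row-strictness of $\sfQ(i,j)$ then reduces to the column bumping lemma: during the processing of row $i$, the column indices with $w_{i,j}=1$ are inserted in strictly increasing order, so the successive new cells created in $\sfP$ fall in pairwise distinct rows, meaning the $i$-labeled cells of $\sfQ$ form a vertical strip. Since later rows of $w$ contribute only labels strictly greater than $i$, columns of $\sfQ$ are automatically weakly increasing.

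For bijectivity I would construct the inverse algorithm: given $(\sfP,\sfQ)$ of common shape with $\sfP$ column-strict and $\sfQ$ row-strict, the cells of $\sfQ$ containing the maximum label $i$ form a vertical strip, and I would process them in decreasing order of column index. For each such cell I reverse the corresponding cascade in $\sfP$: starting from that cell, I move leftward through the columns, unbumping the recorded value at each step according to the rule opposite to $\mathcal{I}_\ell$, until a number $j$ is extracted from column 1; this $j$ is recorded as a 1 in position $(i,j)$ of $w$. Iterating produces a unique preimage $w$ and shows that $\drsk$ is a bijection onto the stated image.

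The dynamical identities are then direct consequences of the construction. The equality $\mu^{(i)}=\sh(\sfP(i))$ holds because $\sfP(i)$ and $\sfQ(i)$ share a common shape at every step and $\mu^{(i)}=\sh(\sfQ(i))$ by the interlacing decomposition~\eqref{eq:tableauInterlacing} of $\sfQ^\top$. Identity~\eqref{eq:drskTypeRow} is immediate since processing row $i$ of $w$ adds exactly $\sum_j w_{i,j}$ cells to both $\sfP$ and $\sfQ$. For~\eqref{eq:drskTypeCol}, note that each full column insertion of $j$ places exactly one additional entry with value $j$ in column $1$ of $\sfP$ (either creating a new cell, or replacing a $j$ already present which then propagates rightwards), so the number of $j$-entries of $\sfP$, namely $\lvert\lambda^{(j)}/\lambda^{(j-1)}\rvert$, equals the total number of insertions of $j$, namely $\sum_i w_{i,j}$. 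The principal obstacle, as remarked above, is the vertical-strip property; a clean way to prove it is by induction on the number of insertions, comparing the bumping routes of two consecutively inserted values $j<j'$ and checking that the route of $j'$ stays strictly above the route of $j$ at every column, so the two terminal cells land in distinct rows.
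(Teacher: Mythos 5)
The paper gives no proof: it just remarks that the claims are ``immediate from the construction or easy to prove'' and cites~\cite[A.4.3]{fulton97}. You are therefore filling in details the authors chose to omit, and the structure you propose---induction over insertions, with the dual column-bumping lemma yielding the vertical-strip property and hence row-strictness of $\sfQ$, reverse insertion for the bijection, and direct bookkeeping for~\eqref{eq:drskShapeDynamics}--\eqref{eq:drskTypeRow}---is exactly the standard one that the cited reference carries out.

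The one concrete flaw is that your statement of the bumping lemma has the inequality reversed, and the same reversal recurs in your description of the inverse algorithm. For the $\geq$-bump column insertion used here, if $j<j'$ are inserted consecutively, the new cell produced by $j'$ lies strictly \emph{below} and weakly to the \emph{left} of the one produced by $j$, not above: check against Figure~\ref{fig:drsk}, where inserting $1,3,4$ in the first step produces new cells $(1,1),(2,1),(3,1)$ with strictly increasing row indices. Your conclusion (distinct rows, hence a vertical strip) survives, since ``strictly below'' serves just as well. But it means the $i$-labeled cells of $\sfQ$ are created in increasing-row, weakly decreasing-column order, so the reverse algorithm must undo them in decreasing row (equivalently weakly increasing column) order to peel off the last-created cell first---the opposite of the ``decreasing order of column index'' you wrote. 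When two $i$-labeled cells share a column (e.g.\ $\sfQ=\young(1,1)$, which arises already from the first row of $w$ in Figure~\ref{fig:drsk}), the cell in the smaller row is not a removable corner, so the reverse step is only well-defined in the correct order. Flipping both directions repairs the argument; the rest of your outline is sound.
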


\subsection{$\dtasep$ dynamics and $\drsk$}
\label{subsec:RSK-TASEP}

Let us now elaborate on the definition of $\dtasep$ given in the introduction and describe its relation to the dynamics of $\drsk$.
Recall that the $N$-particle $\dtasep$ is encoded by the discrete-time Markov chain $(Y(t))_{t\geq 0}$ of particle configurations $Y(t) = (Y_1(t) > Y_2(t) > \cdots > Y_N(t))$, where $Y_k(t)$ is the location of the $k$-th particle from the right.
We consider an arbitrary initial configuration $Y(0) = \bm{y} = (y_1 > y_2 > \cdots > y_N)$.
Let $\bm{p}=(p_t)_{t\geq 1}$ and $\bm{q}=(q_1,\dots,q_N)$ be positive parameters and let $W = \{ W_{t,k} \colon t \geq 1, \, 1\leq k\leq N\}$ be a collection of independent Bernoulli random variables with
\begin{equation}
\label{eq:bernoulliWeights}
\P\left( W_{t,k} = 0 \right)
= \frac{1}{1 + p_t q_k } ,
\qquad\qquad
\P\left( W_{t,k} = 1 \right)
= \frac{p_t q_k}{1 + p_t q_k}.
\end{equation}
The random dynamics is then given by sequential updates from right to left, i.e.\ from the particle labeled $1$ to the particle labeled $N$, driven by these random variables as follows:
\begin{equation}
\label{eq:dTASEP_recursion}
Y_k(t) : = \min \left\{Y_{k-1}(t)-1 , \,   Y_{k}(t-1) + W_{t,k} \right\},
\end{equation}
with the convention that $Y_0(t) = \infty$ for all $t \geq 0$.
These dynamics clearly preserve the ordering of the particles (exclusion rule).
We will abbreviate the $N$-particle $\dtasep$ with parameters $\bm{p}$ and $\bm{q}$ as $\dtasep(N;\bm{p},\bm{q})$.

The construction of $\drsk$ given in~\S\ref{subsec:RSK} (see in particular~\eqref{eq:drskDynamics} and Figure~\ref{fig:drsk}) is `dynamic': at the $t$-th step, the tableau pair $(\sfP(t-1),\sfQ(t-1))$ and the $t$-th row of the input matrix $w$ are used to generate a new tableau pair $(\sfP(t),\sfQ(t))$ through the column insertion algorithm.
We will now see how this dynamic procedure encodes the evolution of the $\dtasep$, if one interprets the input matrix entries as the Bernoulli random variables governing the particle jumps.

Notice that the collection $W = \{ W_{t,k} \colon t \geq 1, \, 1\leq k\leq N\}$ of Bernoulli random variables defined in~\eqref{eq:bernoulliWeights} can be seen as a (random) matrix with infinitely many rows.
For all $t\geq 0$, let $(\sfP(t),\sfQ(t))$ be the tableau pair obtained by applying $\drsk$ to the (random) matrix $\{ W_{i,j} \colon 1\leq i\leq t, \, 1\leq j\leq N\}$ consisting of the first $t$ rows of $W$.
As each $\sfP(t)$ is a column-strict tableau, we may write $\sfP(t) = \big(\lambda^{(0)}(t) \prec \lambda^{(1)}(t) \prec \cdots \prec \lambda^{(N)}(t)\big)$ as a sequence of interlacing partitions; see~\eqref{eq:tableauInterlacing}.

Recall from~\eqref{eq:leftEdge} that the left edge of $\sfP(t)$ is the partition $\ledge(\sfP(t)) = (\lambda^{(1)}_1 \geq \cdots \geq \lambda^{(N)}_N)$, where each $\lambda^{(k)}_k(t)$ is the number of $k$'s in the $k$-th row of $\sfP(t)$.
It is a consequence of the column insertion algorithm that the time evolution of $\ledge(\sfP(t))$ is autonomous from any additional information carried by $\sfP(t)$.
To see this, suppose that the first $t-1$ rows of $W$ have been inserted, yielding a $\sfP$-tableau $\sfP(t-1)$.
If $W_{t,1}=1$, then a $1$ is column inserted into $\sfP(t-1)$, thus yielding a new tableau $\sfP(t,1)$ (according to the notation of~\S\ref{subsec:RSK}) that contains one more $1$ in the first row.
As the subsequent insertion of any $k>1$ does not affect the cells containing $1$'s, we have $\lambda^{(1)}_1(t) = \lambda^{(1)}_1(t-1)+W_{t,1}$.
Suppose now that, at time $t$, for some $k\geq 2$, the numbers $<k$ have been sequentially inserted, thus yielding a tableau $\sfP(t,k-1)$.
Now, if $W_{t,k}=1$, then $\sfP(t,k)$ is generated by column inserting a $k$ into $\sfP(t,k-1)$: if there are more $(k-1)$'s in the $(k-1)$-th row than $k$'s in $k$-th row, then the `new' $k$ will end up in the $k$-th row; however, if there are as many $(k-1)$'s in the $(k-1)$-th row as there are $k$'s in the $k$-th row, then the `new' $k$ will end up in the $j$-th row, for some $j<k$.
Again, since the cells containing $k$ are not affected by subsequent insertions of larger numbers, we conclude that
\begin{equation}
\label{eq:leftEdgeRecursion}
\lambda^{(k)}_k(t) =
\begin{cases}
\lambda^{(k)}_k(t-1) + W_{t,k} &\text{if } \lambda^{(k-1)}_{k-1}(t) > \lambda^{(k)}_k(t-1) , \\
\lambda^{(k)}_k(t-1) &\text{if } \lambda^{(k-1)}_{k-1}(t) = \lambda^{(k)}_k(t-1) .
\end{cases}
\end{equation}
The latter formula is also valid for $k=1$, if we adopt the convention $\lambda^{(0)}_0(t)=\infty$ for all $t\geq 0$.
Notice that the update rules~\eqref{eq:leftEdgeRecursion} must be applied sequentially, from $k=1$ to $k=N$.
It is then straightforward to check that the $N$-tuple 
\[
\big(\lambda^{(k)}_k(t)-k\colon 1\leq k\leq N\big)
=\big(\lambda^{(1)}_1(t)-1 > \lambda^{(2)}_2(t)-2 > \cdots > \lambda^{(N)}_N(t)-N \big)
\]
satisfies the same recursion equations~\eqref{eq:dTASEP_recursion} that the $\dtasep$ satisfies.

{
\begin{remark}
Integer partitions coming from Young tableaux have of course nonnegative parts.
As a result, the transition kernels arising in the $\drsk$ dynamics that will be computed in the next subsection will be acting, in principle, on {elements of $\sfW_N$ with nonnegative components}.
On the other hand, $\dtasep$ particles may occupy any site of $\Z$.
However, this is not an issue: The kernels coming from $\drsk$ can be extended to {elements of $\sfW_N$ with components} of any sign, just by shifting all the parts by the same (integer) amount.
\end{remark}
}

\subsection{Transition probabilities for $\drsk$ and $\dtasep$}
\label{subsec:shape}

We now study the evolution of the $\sfP$- and $\sfQ$-tableaux under the $\drsk$ dynamics considered in~\S\ref{subsec:RSK-TASEP}.
This will yield useful formulas for the transition probabilities of $\dtasep(N;\bm{p},\bm{q})$, as defined in~\eqref{eq:bernoulliWeights}-\eqref{eq:dTASEP_recursion}.

By~\eqref{eq:bernoulliWeights}, the joint probability distribution of the Bernoulli weights up to time $t$ is
\begin{align*}
\P(W_{i,j}=w_{i,j}\colon 1\leq i\leq t, 1\leq j\leq N)
= \frac{1}{Z^{\bm{p},\bm{q}}_{(0,t]}}
\prod_{i=1}^t  p_i^{\sum_{j=1}^N w_{i,j}}
\prod_{j=1}^N q_j^{\sum_{i=1}^t w_{i,j}} ,
\end{align*}
where $\{w_{i,j}\colon 1\leq i\leq t, 1\leq j\leq N\}$ is any $\{0,1\}$-matrix and, for $0\leq r< t$,
\begin{equation}
\label{eq:normalization}
Z^{\bm{p},\bm{q}}_{(r,t]}:=  \prod_{i=r+1}^t \prod_{j=1}^N (1+ p_i q_j) .
\end{equation}
Then, by Theorem~\ref{thm:drsk}, the pushforward law of the tableaux under the $\drsk$ bijection at time $t$ is given by
\begin{equation}
\label{eq:drskPushforward}
\begin{split}
&\P\Big( \sfP(t)=\big(\lambda^{(0)}\prec\cdots\prec \lambda^{(N)}\big),\, \sfQ(t)^\top=\big(\mu^{(0),\top} \prec\cdots\prec\mu^{(t),\top}\big)  \Big) \\
= \;&\ind_{\lambda^{(N)}=\mu^{(t)}}
\frac{1}{Z^{\bm{p},\bm{q}}_{(0,t]}}
\prod_{i=1}^t p_i^{\lvert\mu^{(i)}/\mu^{(i-1)}\rvert}
\prod_{j=1}^N q_j^{\lvert\lambda^{(j)} / \lambda^{(j-1)}\rvert} ,
\end{split}
\end{equation}
where $\big(\lambda^{(0)}\prec\cdots\prec \lambda^{(N)}\big)$ and $\big(\mu^{(0),\top}\prec\cdots\prec\mu^{(t),\top}\big)$ are any sequences of interlacing partitions such that $\lambda^{(k)}, \mu^{(k),\top} \in \sfW_k$ for all $k$.

It follows from~\eqref{eq:drskPushforward} and from the definition~\eqref{eq:schur} of Schur polynomials that the marginal law of the common shape of $\sfP(t)$ and $\sfQ(t)$ is given by a \emph{Schur measure}:
\begin{align}
\label{eq:shapeMarginal}
\P\big( \sh(\sfP(t)) = \sh(\sfQ(t)) = \lambda \big) = \frac{1}{Z^{\bm{p},\bm{q}}_{(0,t]}} \, 
\schur_{\lambda^{\top}}\big(\bm{p}_{[1,t]} \big) \, \schur_{\lambda} (\bm{q}) ,
\end{align}
where $\bm{p}_{[1,t]}:= (p_1,\dots,p_t)$.
By summing the above probabilities over all partitions $\lambda$, one obtains the so-called \emph{dual Cauchy identity} (see e.g.~\cite[\S7.14]{stanley99}):
\[
\sum_{\lambda}
\schur_{\lambda^\top}\big(\bm{p}_{[1,t]}\big) \,
\schur_{\lambda}(\bm{q})
= \prod_{\substack{ 1\leq i\leq t \\ 1\leq j\leq N }} (1+p_iq_j) .
\]

Recalling~\eqref{eq:drskShapeDynamics} and taking a marginal of~\eqref{eq:drskPushforward}, we see that the joint distribution of the shapes of the $\sfP$-tableaux up to time $t$ is given by
\begin{align}
\label{eq:QtableauMarginal}
\begin{split}
&\P\Big( \big(\sh(\sfP(0)),\dots,\sh(\sfP(t)) \big) = \big( \mu^{(0)},\dots,\mu^{(t)}\big)  \Big) 
=\P\Big(\sfQ(t)^\top=\big(\mu^{(0),\top} \prec\cdots\prec\mu^{(t),\top}\big)  \Big) \\
&=\sumtwo{\lambda^{(0)} \prec\cdots \prec \lambda^{(N)}\colon}{\lambda^{(N)}=\mu^{(t)}}
\frac{1}{Z^{\bm{p},\bm{q}}_{(0,t]}}
\prod_{i=1}^t p_i^{\lvert\mu^{(i)}/\mu^{(i-1)}\rvert}
\prod_{j=1}^N q_j^{\lvert\lambda^{(j)} / \lambda^{(j-1)}\rvert}
=  \frac{1}{Z^{\bm{p},\bm{q}}_{(0,t]}}
\prod_{i=1}^t p_i^{\lvert\mu^{(i)}/\mu^{(i-1)}\rvert}
\schur_{\mu^{(t)}}(\bm{q}) .
\end{split}
\end{align}

For $0\leq r<t$, define now the kernels $\hat{\cR}_{(r,t]}$ and $\cR_{(r,t]}$ by setting
\begin{equation}
\label{eq:kerR}
\hat{\cR}_{(r,t]}(\mu,\lambda) 
:= \frac{\schur_{\lambda}(\bm{q}) }{\schur_{\mu}(\bm{q}) } 
\cR_{(r,t]}(\mu,\lambda)
:= \frac{\schur_{\lambda}(\bm{q}) }{\schur_{\mu}(\bm{q}) } 
\frac{1}{Z^{\bm{p},\bm{q}}_{(r,t]}}
\sumtwo{\nu^{(r)} \prec \nu^{(r+1)} \prec \cdots \prec \nu^{(t)}\colon }{\nu^{(r)} = \mu^\top, \, \nu^{(t)}=\lambda^\top} 
\prod_{i=r+1}^t p_i^{\lvert\nu^{(i)} / \nu^{(i-1)} \rvert}
\end{equation}
for any $\mu,\lambda\in\sfW_N$, where $\nu^{(k)} \in \sfW_k$ for all $k$.
From the first equality, we see that $\hat{\cR}_{(r,t]}$ can be interpreted as a Doob $h$-transform of $\cR_{(r,t]}$, with Schur polynomials as $h$-functions {(for a precise account of Doob's $h$-transforms, see e.g.~\cite{RogersWilliams00,Doob01})}.
It follows immediately from~\eqref{eq:QtableauMarginal} that $\hat{\cR}_{(r,t]}$ is the transition kernel of the shape of the $\sfP$-tableau from time $r$ to time $t$:
\begin{equation}
\label{eq:kerR_prob}
\begin{split}
\hat{\cR}_{(r,t]}(\mu,\lambda)
&= \P\left( \sh(\sfP(t)) =\lambda \;\middle|\; \sh(\sfP(r)) =\mu, \sh(\sfP(r-1)), \dots, \sh(\sfP(0)) \right) \\
&= \P\left( \sh(\sfP(t)) =\lambda \;\middle|\; \sh(\sfP(r)) =\mu \right) .
\end{split}
\end{equation}

Next, define the kernel $\hat{K}$ and $K$ by setting
\begin{align}
\label{eq:kerK}
\hat{K}(\lambda,\bm{y})
:= \frac{1}{\schur_\lambda(\bm{q})}
K(\lambda,\bm{y})
:= \frac{1}{\schur_\lambda(\bm{q})}
\sumtwo{\lambda^{(0)}\prec \cdots \prec \lambda^{(N)}=\lambda\colon}{\big(\lambda^{(1)}_1,\dots,\lambda^{(N)}_N\big) = \bm{y}}
\prod_{j=1}^N q_j^{\lvert\lambda^{(j)} / \lambda^{(j-1)}\rvert}
\end{align}
for any $\lambda, \bm{y} \in \sfW_N$, where, as usual, $\lambda^{(k)} \in \sfW_k$ for all $k$.
Notice that $K$ is an unnormalised version of $\hat{K}$, which is a probability kernel.
It follows from~\eqref{eq:drskPushforward} and~\eqref{eq:QtableauMarginal} that, for all $t\geq 0$,
\begin{align}
\label{eq:kerK_prob}
\begin{split}
\hat{K}(\lambda,\bm{y})
&= \P\left( \ledge(\sfP(t)) = \bm{y} \;\middle|\; \sh(\sfP(t)) =\lambda, \sh(\sfP(t-1)), \dots, \sh(\sfP(0)) \right) \\
&= \P\left( \ledge(\sfP(t)) = \bm{y} \;\middle|\; \sh(\sfP(t)) =\lambda \right) .
\end{split}
\end{align}

Finally, recall from~\S\ref{subsec:RSK-TASEP} that the left edge of $\sfP$ evolves as a Markov chain in its own filtration (i.e., autonomously from the rest of $\sfP$).
Thus, we may write its transition kernel from time $r$ to time $t$, for $0\leq r<t$, as
\begin{equation}
\label{eq:kerQ}
\begin{split}
\cQ_{(r,t]}(\bm{y}, \bm{y}')
:= \;&\P\left( \ledge(\sfP(t)) = \bm{y} ' \;\middle|\; \ledge(\sfP(r)) = \bm{y} \right) \\
= \;&\P\left( \ledge(\sfP(t)) = \bm{y}' \;\middle|\; \ledge(\sfP(r)) = \bm{y}, \sfP(r),\dots,\sfP(0) \right)
\end{split}
\end{equation}
for $\bm{y}, \bm{y}' \in \sfW_N$.
{We will soon derive explicit expressions for the kernel $Q_{(r,t]}$ defined above.}

The $\cQ$- and $\hat{\cR}$-kernels are transition kernels of the left edge and of the shape of the $\sfP$-tableau, respectively; on the other hand, $\hat{K}$ encodes the conditional law of the left edge of $\sfP$ given its shape at any given time.
Therefore, from the theory of Markov functions (see e.g.~\cite{rogersPitman81}), we expect these kernels to satisfy intertwining relations, and this is indeed the case.
We state the result in the next proposition and, for completeness, we also provide a proof, following~\cite{diekerWarren08}.
\begin{proposition}\label{prop:intertwine}
For $0\leq r<t$, the following intertwining relations between operators from $\sfW_N$ to $\sfW_N$ hold:
\begin{align}
\label{eq:intertw}
\hat{\cR}_{(r,t]} \hat{K}
= \hat{K} \, \cQ_{(r,t]}
\qquad \text{and} \qquad
\cR_{(r,t]} K
= K \, \cQ_{(r,t]} .
\end{align}
\end{proposition}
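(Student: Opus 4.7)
The plan is to prove the first intertwining $\hat{\cR}_{(r,t]} \hat{K} = \hat{K} \, \cQ_{(r,t]}$ by a purely probabilistic argument that identifies both sides with the same conditional probability, and then to derive the second intertwining $\cR_{(r,t]} K = K \, \cQ_{(r,t]}$ from the first through an algebraic manipulation absorbing the Schur polynomial normalisations~\eqref{eq:kerR}--\eqref{eq:kerK}.

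For the first identity, my strategy is to show that both $(\hat{\cR}_{(r,t]} \hat{K})(\lambda, \bm{y}')$ and $(\hat{K} \cQ_{(r,t]})(\lambda, \bm{y}')$ equal $\P(\ledge(\sfP(t)) = \bm{y}' \mid \sh(\sfP(r)) = \lambda)$. On the left-hand side, using~\eqref{eq:kerR_prob} and~\eqref{eq:kerK_prob},
\[
(\hat{\cR}_{(r,t]} \hat{K})(\lambda, \bm{y}') = \sum_{\lambda'} \P\bigl(\sh(\sfP(t)) = \lambda' \,\big|\, \sh(\sfP(r)) = \lambda\bigr) \, \P\bigl(\ledge(\sfP(t)) = \bm{y}' \,\big|\, \sh(\sfP(t)) = \lambda'\bigr).
\]
The conditional form~\eqref{eq:kerK_prob} allows me to enlarge the conditioning in the second factor to include $\sh(\sfP(r)) = \lambda$ without changing its value, and the tower property then collapses the sum to the desired conditional probability. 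On the right-hand side,
\[
(\hat{K} \cQ_{(r,t]})(\lambda, \bm{y}') = \sum_{\bm{y}} \P\bigl(\ledge(\sfP(r)) = \bm{y} \,\big|\, \sh(\sfP(r)) = \lambda\bigr) \, \P\bigl(\ledge(\sfP(t)) = \bm{y}' \,\big|\, \ledge(\sfP(r)) = \bm{y}\bigr).
\]
Since $\sh(\sfP(r))$ is a function of $\sfP(r)$, the stronger conditional characterisation~\eqref{eq:kerQ} lets me enlarge the conditioning in the second factor to include $\sh(\sfP(r)) = \lambda$, and a second application of the tower property produces the same quantity.

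For the second identity, I substitute $\cR_{(r,t]}(\mu,\lambda') = \frac{\schur_{\mu}(\bm{q})}{\schur_{\lambda'}(\bm{q})} \hat{\cR}_{(r,t]}(\mu,\lambda')$ and $K(\lambda',\bm{y}') = \schur_{\lambda'}(\bm{q}) \hat{K}(\lambda',\bm{y}')$ into $\cR_{(r,t]} K$; the intermediate Schur factors cancel, leaving $(\cR_{(r,t]} K)(\lambda,\bm{y}') = \schur_{\lambda}(\bm{q}) (\hat{\cR}_{(r,t]} \hat{K})(\lambda,\bm{y}')$. An analogous substitution in $K \cQ_{(r,t]}$ gives $(K \cQ_{(r,t]})(\lambda,\bm{y}') = \schur_{\lambda}(\bm{q}) (\hat{K} \cQ_{(r,t]})(\lambda,\bm{y}')$, so that both intertwinings are equivalent up to the common factor $\schur_{\lambda}(\bm{q})$.

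The only conceptual point requiring care, rather than a genuine technical obstacle, is to make sure that $\hat{K}$ and $\cQ_{(r,t]}$ act as \emph{autonomous} conditional kernels, i.e.\ they do not depend on any further past information about the $\sfP$-tableaux beyond what is displayed. Both facts have already been secured in the preceding subsections: the stronger form of~\eqref{eq:kerK_prob} shows that $\hat{K}(\lambda, \bm{y})$ is the conditional probability given the whole past sequence of shapes, while the stronger form of~\eqref{eq:kerQ} expresses $\cQ_{(r,t]}(\bm{y}, \bm{y}')$ as the conditional probability given the entire past of $\sfP$-tableaux, which follows from the autonomy of the left edge established via the column insertion recursion~\eqref{eq:leftEdgeRecursion}.
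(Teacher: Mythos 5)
Your proof is correct and follows essentially the same route as the paper's: both identify the two compositions in the first intertwining with the conditional law of $\ledge(\sfP(t))$ given information at time $r$, using the Markov property \eqref{eq:kerR_prob}, the conditional characterisation \eqref{eq:kerK_prob}, and the autonomy of the left edge \eqref{eq:kerQ}, and both derive the second intertwining by cancelling the Schur normalisations. The only stylistic difference is that the paper conditions on the full shape history $\sh(\sfP(0)),\dots,\sh(\sfP(r))$ (so \eqref{eq:kerR_prob} and \eqref{eq:kerK_prob} apply verbatim), whereas you condition only on $\sh(\sfP(r)) = \lambda$ and then justify the needed Markov identities by a small "conditioning-enlargement plus tower property" step, which is valid and which you correctly flag as the one point requiring care.
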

\begin{proof}
Let $\bm{y}\in \sfW_N$.
By~\eqref{eq:kerK_prob} and~\eqref{eq:kerR_prob}, we have
\[
\begin{split}
&\P\big(\ledge (\sfP(t)) = \bm{y} \;\big| \sh(\sfP(0)), \dots, \sh(\sfP(r)) \big) \\
= \;& \E\big[ \P\big(\ledge(\sfP(t)) = \bm{y} \;\big| \sh(\sfP(0)), \dots, \sh(\sfP(t)) \big) \;\big| \sh(\sfP(0)), \dots, \sh(\sfP(r)) \big] \\
= \;& \E\big[ \hat{K}(\sh(\sfP(t)), \bm{y}) \;\big| \sh(\sfP(0)), \dots, \sh(\sfP(r)) \big]
= \sum_{\lambda} \hat{\cR}_{(r,t]}( \sh(\sfP(r)), \lambda) \, \hat{K}(\lambda, \bm{y}) .
\end{split}
\]
{We point out that, by the definition of $\hat{\cR}_{(r,t]}$ in~\eqref{eq:kerR}, $\hat{\cR}_{(r,t]}( \sh(\sfP(r)), \lambda)$ is non-zero only for a finite number of partitions $\lambda$.}

On the other hand, by~\eqref{eq:kerQ} and~\eqref{eq:kerK_prob}, we have
\[
\begin{split}
&\P\big(\ledge(\sfP(t)) = \bm{y} \;\big| \sh(\sfP(0)), \dots, \sh(\sfP(r)) \big) \\
= \;& \E\big[ \P\big(\ledge(\sfP(t)) = \bm{y} \;\big|\; \sfP(0), \dots, \sfP(r) \big) \;\big| \sh(\sfP(0)), \dots, \sh(\sfP(r)) \big] \\
= \;& \E\big[ \cQ_{(r,t]}(\ledge (\sfP(r)), \bm{y}) \;\big| \sh(\sfP(0)), \dots, \sh(\sfP(r)) \big]
= \sum_{\lambda} \hat{K}(\sh(\sfP(r)), \lambda) \, \cQ_{(r,t]}(\lambda,\bm{y}) .
\end{split}
\]
Comparing the two expressions above leads to the first intertwining relation in~\eqref{eq:intertw}.
The second one follows from the
first, by using~\eqref{eq:kerR} and~\eqref{eq:kerK} and noting that the Schur polynomials cancel out.
\end{proof}

\begin{remark}
Notice that, by construction, $\cQ_{(r,t]}(\bm{y}, \bm{y}')$ equals zero unless $y_j\leq y_j'$ for all $1\leq j\leq N$.
\end{remark}

Define now the modified kernels
\begin{align}
\label{eq:kerLambda}
\Lambda(\lambda,\bm{y})
:= q_1^{-y_1} \cdots q_N^{-y_N} K(\lambda, \bm{y} ),
\qquad
\hat{\cQ}_{(r,t]}(\bm{y},\bm{y}'):= \Bigg( \prod_{j=1}^N q_j^{y_j-y_j'}\Bigg) \cQ_{(r,t]}(\bm{y},\bm{y}') .
\end{align}
It is immediate to see that the second intertwining relation in~\eqref{eq:intertw} still holds when replacing $K$ with $\Lambda$ and $\cQ_{(r,t]}$ with $\hat{\cQ}_{(r,t]}$.
Moreover, it was proven in~\cite[Prop.~3]{diekerWarren08} that $\Lambda$ is invertible (with an explicit inverse).
This provides an explicit expression for the kernel $\hat{\cQ}_{(r,t]}$.
We summarize these facts in the next proposition.
\begin{proposition}\label{prop:intertwining}
The intertwining relation $\cR_{(r,t]} \Lambda = \Lambda \, \hat{\cQ}_{(r,t]}$ holds.
Moreover, the operator $\Lambda$ is invertible, so that
\begin{equation}
\label{eq:kerQ_LambdaInverse}
\hat{\cQ}_{(r,t]} = \Lambda^{-1} \cR_{(r,t]} \, \Lambda .
\end{equation}
\end{proposition}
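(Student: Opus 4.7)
The plan is to reduce the claim to the second intertwining relation of Proposition \ref{prop:intertwine}, namely $\cR_{(r,t]} K = K \, \cQ_{(r,t]}$, by viewing the passage from $(K, \cQ_{(r,t]})$ to $(\Lambda, \hat{\cQ}_{(r,t]})$ as a diagonal conjugation. Concretely, let $D$ denote the diagonal operator on $\sfW_N$ with entries $D(\bm{y}, \bm{y}) = q_1^{-y_1}\cdots q_N^{-y_N}$. From the definitions in~\eqref{eq:kerLambda} one checks immediately that $\Lambda = K D$ and $\hat{\cQ}_{(r,t]} = D^{-1} \cQ_{(r,t]} D$, both read off entry by entry. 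Multiplying $\cR_{(r,t]} K = K\,\cQ_{(r,t]}$ on the right by $D$ then yields
\[
\cR_{(r,t]} \Lambda \;=\; \cR_{(r,t]} K D \;=\; K\, \cQ_{(r,t]} D \;=\; K D \cdot D^{-1} \cQ_{(r,t]} D \;=\; \Lambda \, \hat{\cQ}_{(r,t]},
\]
which is the first assertion. (If I did not want to introduce $D$, I could equivalently compare both sides entrywise by plugging in~\eqref{eq:kerLambda} and noting that, in the composition against $\hat{\cQ}_{(r,t]}$, the factor $q_1^{y_1}\cdots q_N^{y_N}$ cancels the $q$-weights carried by $\Lambda$, leaving the prefactor $q_1^{-y_1'}\cdots q_N^{-y_N'}$ that also appears when one applies $\Lambda$ after $\cR_{(r,t]}$ via the unnormalized intertwining.)

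For the second assertion, I would simply invoke the invertibility of $\Lambda$ proved in \cite[Prop.~3]{diekerWarren08}; one should perhaps remark that, since $K(\lambda,\bm{y})$ vanishes unless the entries of $\bm{y}$ are bounded by those of $\lambda$, the operator $\Lambda$ has an inherent triangular structure in a natural ordering of $\sfW_N$, which is precisely what makes the explicit inverse of Dieker--Warren possible. Given invertibility, left-multiplying $\cR_{(r,t]} \Lambda = \Lambda\, \hat{\cQ}_{(r,t]}$ by $\Lambda^{-1}$ delivers~\eqref{eq:kerQ_LambdaInverse}.

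I do not foresee any real obstacle here: the only non-trivial ingredient is the invertibility of $\Lambda$, which is borrowed from \cite{diekerWarren08}; everything else is a bookkeeping calculation with the diagonal $q$-weights. The conceptual content is that conjugation by $D$ is exactly the transformation that turns the unnormalized Markov-function intertwining into a form where the rightmost factor $\hat{\cQ}_{(r,t]}$ can be explicitly recovered from $\cR_{(r,t]}$, which is the identity we will exploit in the sequel to represent the $\dtasep$ transition kernel.
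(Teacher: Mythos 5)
Your proposal is correct and matches the paper's approach: the paper likewise deduces the intertwining relation $\cR_{(r,t]}\Lambda = \Lambda\hat{\cQ}_{(r,t]}$ directly from the unnormalized relation $\cR_{(r,t]}K = K\cQ_{(r,t]}$ of Proposition~\ref{prop:intertwine} (calling the step ``immediate''), and cites \cite[Prop.~3]{diekerWarren08} for the invertibility of $\Lambda$. Your diagonal-conjugation formulation $\Lambda = KD$, $\hat{\cQ}_{(r,t]} = D^{-1}\cQ_{(r,t]}D$ is simply a clean way of making that immediate step explicit, not a different route.
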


In the next section we will interpret~\eqref{eq:kerQ_LambdaInverse} in terms of weights of non-intersecting paths.

\section{Path ensembles and determinantal point processes}
\label{sec:path-constr}

{This section concerns the non-intersecting path constructions that lie at the core of our approach.
In~\S\ref{subsec:localOperators}, we introduce certain `local' Toeplitz operators that we will use throughout this work.
In~\S\ref{subsec:pathEnsembles}, we provide a non-intersecting path interpretation of the $\dtasep$  transition kernel in terms of these local operators.
In~\S\ref{subsec:DPP}, we deduce an expression for the law of $\dtasep$ in terms of a determinantal point process, which we then study in~\S\ref{subsec:modified-kernel}, obtaining an initial expression for its correlation kernel in terms of biorthogonal functions and local operators.}
	
\subsection{Local operators}
\label{subsec:localOperators}

We will express the weights of the path ensembles in terms of convolutions of local operators, which we now introduce. 
Let us first define the conventions for the operator formalism.
For an operator $A$ defined through a kernel	$(A(x,y) \colon x\in X, y\in Y)$ on suitable spaces $X,Y$ and a function (or vector) $f=(f(y) \colon y\in Y)$, we define the function $(A\circ f)(x):=\sum_{y\in Y} A(x,y) f(y)$, {whenever the sum is absolutely convergent.}
Similarly, for a function $g=(g(x) \colon x\in X)$, we define the function $(g\circ A)(y):=\sum_{x \in X} g(x) A(x,y)$. 
For two operators $A$ and $B$ with kernels $(A(x,y) \colon x\in X, y\in Y\big)$ and $(B(y,z) \colon y\in Y, z\in Z)$, respectively, we define the operator $A\circ B$ through the kernel $(A\circ B) (x,z) :=\sum_{y\in Y} A(x,y) B(y,z)$.
Finally, we define the adjoint of $A$ as the operator $A^*$ with kernel $A^*(x,y):= A(y,x)$.
	
Let us now introduce some specific local operators we are concerned with.
Recall from~\S\ref{subsec:RSK-TASEP} that we fixed positive parameters $\bm{p}=(p_i)_{i\geq 1}$ and $\bm{q}=(q_1,\dots,q_N)$.

The first family of operators encode geometric jumps \emph{weakly} to the right: for $i=1,\dots,N$, let
\begin{align}
\label{eq:Qdagger}
Q^\dagger_i(x,y):= q_i^{y-x}\ind_{\{y\geq x\}}, \qquad x,y\in\Z.
\end{align}
We also define a family of operators encoding geometric jumps \emph{strictly} to the left:
\begin{equation}
\label{eq:Q}
Q_i(x,y):= q_i^{y-x}\1_{\{y< x\}},\qquad x,y\in \mathbb{Z}.
\end{equation}
We note that, under the hypothesis $q_i>1$ {(which we will always assume, without explicitly mentioning, from now on)}, the kernel $Q_i(x,y)$ defines a bounded operator on $\ell^1(\Z)$ with a well-defined inverse:
\begin{equation}
\label{eq:Qinverse}
Q_i^{-1}(x,y):= -\1_{y=x}+q_i\1_{y=x+1},\qquad x,y\in \mathbb{Z}.
\end{equation}
Finally, for $i\geq 1$, we define the operators
\begin{align}
\label{eq:R_operator}
R_i(x,y):=\ind_{\{y=x\}}+p_i\ind_{\{y=x+1\}}.
\end{align}

For $1\leq m\leq n\leq N$, we will use the compact notations 
\begin{align}
\label{eq:Q_composition}
Q_{(m-1,n]}
&:=Q_{[m,n]}
:= Q_{m}\circ\cdots\circ  Q_n, \\
\label{eq:Qinverse_composition}
Q^{-1}_{(m-1,n]}
&:=Q^{-1}_{[m,n]}:=  Q_{n}^{-1}\circ\cdots\circ  Q_m^{-1}.
\end{align}
We will abuse the notation slightly by defining
\begin{equation}
\label{eq:Q_compositionExt}
Q_{[m,n]}:=  Q_{m}\circ\cdots\circ  Q_N\circ Q_N^{-1}\circ\cdots  Q_{n+1}^{-1},
\end{equation}
which makes sense even for $m>n$, in which case $Q_{[m,n]}=Q_{m-1}^{-1} \circ \dots \circ Q_{n+1}^{-1}$.
{In particular, we have $Q_{(n,n]}=Q_{[n+1,n]}:=I$ for $1\leq n\leq N$.}
We will use similar conventions for the $Q^\dagger$- and $R$-operators.

Certain convolutions of the operators defined above may be expressed in terms of symmetric functions.
Given indeterminates $x_1,\dots,x_N$, let 
\[
h_n(x_1,\dots,x_N):=\sum_{1\leq i_1\leq\cdots \leq i_n \leq N} x_{i_1}\cdots x_{i_n}, \qquad
e_{n}(x_1,\dots,x_N):=\sum_{1\leq i_1<\cdots <i_n\leq N} x_{i_1}\cdots x_{i_n}
\]
be the complete symmetric polynomial of degree $n$ and the elementary symmetric polynomial of degree $n$, respectively. 
{By convention, we set $h_0=e_0:=1$ and $h_n=e_n:=0$ for all $n<0$.}
Then, it is not difficult to check that
\begin{align}\label{eq:Qdagger_h}
Q^\dagger_{(i,N]}(x,y)
& = h_{y-x}(0,\dots,0,q_{i+1},\dots,q_N), \\
\intertext{and}
\label{eq:Qinverse_e}
\begin{split}
(-1)^{N-i} Q_{(i,N]}^{-1}(x,y)
&= e_{y-x}(0,\dots,0, -q_{i+1},\dots,-q_N) \\
&= (-1)^{y-x} e_{y-x}(0,\dots,0, q_{i+1},\dots,q_N) ,
\end{split}
\end{align}
for $x,y\in\Z$, where the first $i$ indeterminates of the symmetric polynomials are set to be $0$.
{These identities follow from the definitions of the symmetric functions $h_n$ and $e_n$, the form of the operators
$Q^\dagger$ and $Q^{-1}$ in \eqref{eq:Qdagger} and \eqref{eq:Qinverse} and the definition of the operation $\circ$}.
	
Observe that the values of $Q^\dagger_i(x,y)$, $Q_i(x,y)$, $Q_i^{-1}(x,y)$ and $R_i(x,y)$ only depend on $y-x$.
The operators with such a property are known as (bi-infinite) \emph{Toeplitz operators}.
To each Toeplitz operator $T$ with kernel $T(x,y)$ on $\mathbb{Z}\times \mathbb{Z}$, we associate a formal Laurent series $\varphi_{T}(z)$, known as the symbol of $T$, defined by
\[
\varphi_{T}(z):= \sum_{x\in \mathbb{Z}} T(0,x)z^{-x}.
\]
Inside its domain of convergence, which is a (possibly empty) annulus $\{r<|z|<R\}$, the function $\varphi_T(z)$ is analytic in $z$.
We summarize some standard properties of Toeplitz operators that will be used later; the proofs are elementary, so we omit them.
From now on, all contours will be implicitly taken to have a counterclockwise orientation.
	
\begin{proposition}
\begin{enumerate}
\item Let $T$ be a (bi-infinite) Toeplitz operator whose symbol $\varphi_T(z)$ is analytic in a non-empty annulus $\{r<|z|<R\}$.
Then, the entries $T(x,y)$ can be computed through the contour integral
\begin{equation}
\label{eq:invertingGF}
T(x,y)=\oint_{|z|=r_1}\frac{\mathrm{d}z}{2\pi \i z}z^{y-x}\cdot \varphi_T(z),
\end{equation}
for any $r<r_1<R$.
\item Let $T$ and $S$ be two (bi-infinite) Toeplitz operators whose symbols $\varphi_T(z)$ and $\varphi_S(z)$ are both analytic inside a common non-empty annulus $\{r<|z|<R\}$.
Then, the convolutions $T\circ S$ and $S\circ T$ both converge, with
\begin{equation}
\varphi_{T\circ S}(z)
= \varphi_{T}(z)\varphi_{S}(z)
= \varphi_{S\circ T}(z)
\end{equation} 
for all $z$ on the annulus.
In particular, $T$ and $S$ commute.
Assuming that $T$ is invertible with $T^{-1}=S$, we have 
\begin{equation*}
1=\varphi_{\id}(z)
=\varphi_T(z)\varphi_{T^{-1}}(z),
\end{equation*}
or equivalently 
\[
\varphi_{T^{-1}}(z)=\varphi_{T}(z)^{-1}.
\]
\end{enumerate}
\label{prop:Toeplitz}
\end{proposition}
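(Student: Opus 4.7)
The proposition has two parts, both of which reduce to standard manipulations of Laurent series, so I would organize the proof as a short pair of direct computations rather than an induction or construction.

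For part (i), by definition $\varphi_T(z) = \sum_{k\in\Z} T(0,k)\, z^{-k}$ is a Laurent series that converges absolutely on $\{r < |z| < R\}$. The first step is Cauchy's theorem for Laurent coefficients: on any circle $|z|=r_1$ with $r<r_1<R$, the coefficient of $z^{-k}$ equals $\oint_{|z|=r_1} \frac{\diff z}{2\pi\i z}\, z^k\, \varphi_T(z)$. Setting $k = y-x$ and using the Toeplitz property $T(x,y) = T(0,y-x)$ yields~\eqref{eq:invertingGF}.

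For part (ii), I would compute the symbol of $T\circ S$ directly:
\begin{align*}
\varphi_{T\circ S}(z)
&= \sum_{y\in\Z} \Bigl(\sum_{w\in\Z} T(0,w)\, S(w,y)\Bigr) z^{-y} \\
&= \sum_{w\in\Z} T(0,w)\, z^{-w} \sum_{y\in\Z} S(0,y-w)\, z^{-(y-w)}
 = \varphi_T(z)\,\varphi_S(z),
\end{align*}
where the Toeplitz property of $S$ and a change of summation index $y\mapsto y-w$ are used in the second line. Swapping the roles of $T$ and $S$ gives the same product, so $\varphi_{T\circ S}=\varphi_{S\circ T}$; since the symbol determines a (bi-infinite) Toeplitz operator uniquely by part (i), the operators $T\circ S$ and $S\circ T$ coincide, and so $T$ and $S$ commute. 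Applying this factorization to $T\circ T^{-1} = \id$, whose symbol is $\varphi_{\id}(z)\equiv 1$, immediately yields $\varphi_{T^{-1}}(z) = \varphi_T(z)^{-1}$.

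\textbf{Main obstacle.} The only step requiring genuine care is the Fubini-type interchange in the displayed computation, together with the absolute convergence of $\sum_w T(0,w)\, S(w,y)$ that defines the entries of $T\circ S$. My plan is to fix any $r_1$ with $r<r_1<R$ and evaluate on the circle $|z|=r_1$: by the hypothesis of analyticity on the common annulus, both $\sum_w |T(0,w)|\, r_1^{-w}$ and $\sum_y |S(0,y-w)|\, r_1^{-(y-w)}$ are finite, so Tonelli/Fubini justifies both the exchange of sums and the pointwise convergence of the convolution. Beyond this bookkeeping I expect no further difficulty, since the proposition is, at its core, the classical statement that the Fourier/Laurent transform intertwines convolution and multiplication.
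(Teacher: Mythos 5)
Your proof is correct and is exactly the standard argument the authors had in mind; the paper explicitly omits the proof as elementary (``the proofs are elementary, so we omit them''). Your attention to the Fubini/Tonelli justification for the composition formula is the only point requiring care, and you have handled it correctly by working on a fixed circle $|z|=r_1$ inside the common annulus where both Laurent series converge absolutely.
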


For example, the symbols of the $Q^\dagger$- and $Q$-operators defined above are given by
\begin{align}
\label{eq:Qdagger_symbol}
\phi_{Q^\dagger_i}(z) &=\frac{z}{z-q_i} &&\text{for } |z|>q_i, \\
\label{eq:Q_symbol}
\phi_{Q_i}(z) &= \frac{z}{q_i-z} &&\text{for } |z|<q_i, \\
\label{eq:Qinverse_symbol}
\phi_{Q_i^{-1}}(z) &= \frac{q_i-z}{z} &&\text{for } |z|>0.
\end{align}
As a consequence of Proposition~\ref{prop:Toeplitz} {and the fact that the series are absolutely convergent in the domains considered below}, we then obtain the contour integral representations
\begin{align}\label{eq:Qdagger_integral}
Q^\dagger_{[m,n]}(x,y)
&=(-1)^{n-m+1}\oint_{|z|=R} \frac{\mathrm{d}z}{2\pi \mathrm{i} z} \frac{z^{y-x+n-m+1}}{\prod_{\ell=m}^{n}(q_\ell-z)}
&&\text{for } R>\max\{q_i\}_{i=m}^n, \\
\label{eq:Q_integral}
Q_{[m,n]}(x,y)
&=\oint_{|z|=r} \frac{\mathrm{d} z}{2\pi \mathrm{i}z} \frac{z^{y-x+n-m+1}}{\prod_{\ell=m}^{n}(q_\ell -z)}
&&\text{for } 0<r<\min\{q_i\}_{i=m}^n, \\
\label{eq:Qinverse_integral}
Q^{-1}_{[m,n]}(x,y)
&=\oint_{|z|=r} \frac{\mathrm{d} z}{2\pi \mathrm{i}z} z^{y-x+m-n-1}\prod_{\ell=m}^{n}(q_\ell-z)
&&\text{for } r>0.
\end{align}

\subsection{Non-intersecting path ensembles}
\label{subsec:pathEnsembles}

We now define two ensembles of paths, which we call \emph{$h$-paths} $\sfh\Pi$ (related to the complete symmetric polynomials $h$) and \emph{$e$-paths} $\sfe\Pi$ (related to the elementary symmetric polynomials $e$).

Let $(y_1,i_1),\dots,(y_n,i_n),(x_1,j_1),\dots,(x_n,j_n) \in \Z^2$.
We denote by ${\sfh\Pi}_{\{(y_1,i_1),\dots,(y_n,i_n)\}}^{\{(x_1,j_1),\dots,(x_n,j_n)\}}$ the (possibly empty) ensemble of all of $n$-tuples $(\pi_1,\dots,\pi_n)$ of non-intersecting paths in $\Z^2$, such that each path $\pi_k$ starts from $(y_k,i_k)$, ends at $(x_k,j_k)$, and moves either straight up or straight to the right at each step; namely, from a point $(x,j)$ the path moves either to $(x,j+1)$ or to $(x+1,j)$.
We also denote by $\sfh\Pi_{\{(y_1,i_1),\dots,(y_n,i_n)\}, \uparrow}^{\{(x_1,j_1),\dots,(x_n,j_n)\}}$ the subset of $\Pi_{\{(y_1,i_1),\dots,(y_n,i_n)\}}^{\{(x_1,j_1),\dots,(x_n,j_n)\}}$ of all $(\pi_1,\dots,\pi_n)$ such that the first step of each path $\pi_k$ is vertical, upwards.
	
We also denote by ${\sfe\Pi}_{\{(y_1,i_1),\dots,(y_n,i_n)\}}^{\{(x_1,j_1),\dots,(x_n,j_n)\}}$ the ensemble of all $n$-tuples $(\pi_1,\dots,\pi_n)$ of non-intersecting paths in $\Z^2$, such that each path $\pi_k$ starts from $(y_k,i_k)$, ends at $(x_k,j_k)$, and moves either straight up or diagonally up-right at each step; namely, from a point $(x,i)$ the path moves either to $(x,i+1)$ or to $(x+1,i+1)$.
Finally, we denote by $({\sfe\Pi}^*)_{\{(y_1,i_1),\dots,(y_n,i_n)\}}^{\{(x_1,j_1),\dots,(x_n,j_n)\}}$ a similar $e$-path ensemble, where the allowed diagonal steps are up-left, instead of up-right.
	
In the following, both $h$- and $e$-paths will be assigned weights, based on suitable weights $\sfw\sft(\sfe)$ assigned to each edge $\sfe$.
The rules are as follows.
The weight of a path $\pi$ with edges $\sfe_1,\sfe_{2},\dots$ is defined as $\sfw\sft(\pi)= \sfw\sft(\sfe_1) \sfw\sft(\sfe_2)\cdots$. 
The total weight of an $n$-tuple $(\pi_1,\dots,\pi_n)$ of paths is defined as $\sfw\sft(\pi_1,\dots,\pi_n):=\prod_{i=1}^n\sfw\sft(\pi_i)$.
Finally, the weight of an \emph{ensemble} $\Pi$ of $n$-tuples of paths is defined as $\sfw\sft(\Pi):= \sum_{(\pi_1,\dots,\pi_n)\in\Pi} \sfw\sft(\pi_1,\dots,\pi_n)$.

We are now ready to provide the path and local operator representations of the kernels appearing in~\eqref{eq:kerQ_LambdaInverse}.
\begin{proposition}[Path and local operator representation of $\Lambda$]
\label{prop:pathLambda}
Let a vertical edge connecting $(x,i)$ to $(x,i+1)$ be assigned weight $1$ and a horizontal edge connecting $(x,i)$ to $(x+1,i)$ be assigned weight $q_i$, for $x\in \Z$ and $1\leq i\leq N$.
For $\lambda=(\lambda_1\geq\cdots \geq\lambda_N)$ and $\bm{y}'=(y_1'\geq\dots\geq y_N')$ in $\sfW_N$, the kernel $ \Lambda(\lambda,\bm{y}')$ defined in~\eqref{eq:kerLambda} can be written as
\begin{align}
\Lambda(\lambda,\bm{y}')
&= \sfw\sft \Big({\sfh\Pi}_{\{ (y_i'-i,i) \colon 1\leq i\leq N\}, \uparrow}^{\{(\lambda_i-i,N)\colon 1\leq i\leq N\}} \Big)
\label{eq:pathLambda1}\\
&= \det \Big( \sfw\sft\Big({\sfh\Pi}_{(y_i'-i,i),\uparrow}^{(\lambda_j-j,N)} \Big)\Big)_{1\leq i,j\leq N}
\label{eq:pathLambda2}\\
&= \det \Big( Q^\dagger_{(i,N]}(y_i'-i, \lambda_j-j)\Big)_{1\leq i,j\leq N}. 
\label{eq:pathLambda3}
\end{align}
\end{proposition}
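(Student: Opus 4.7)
The plan is to prove the three equalities~\eqref{eq:pathLambda1}--\eqref{eq:pathLambda3} in order: first a weight-preserving bijection between interlacing sequences and the non-intersecting path ensemble, then the Lindstr\"om--Gessel--Viennot (LGV) lemma, and finally an explicit evaluation of each matrix entry via~\eqref{eq:Qdagger_h}.

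For~\eqref{eq:pathLambda1}, given an interlacing sequence $\emptyset = \lambda^{(0)} \prec \cdots \prec \lambda^{(N)} = \lambda$ with $\lambda^{(j)}_j = y_j'$, I define the $i$-th path $\pi_i$ starting at $(y_i'-i,i)$ by forcing its first edge to be vertical (as the $\uparrow$-subscript demands) and then, for each height $k$ from $i+1$ to $N$, taking $\lambda^{(k)}_i - \lambda^{(k-1)}_i$ horizontal edges (each of weight $q_k$) followed by a vertical edge, omitting the last vertical at height $N$. Thus $\pi_i$ terminates at $(\lambda_i-i, N)$ and carries weight $\prod_{k=i+1}^N q_k^{\lambda^{(k)}_i - \lambda^{(k-1)}_i}$. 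Multiplying over $i$ and reorganising by height with the identity $\sum_{i=1}^{k-1}(\lambda^{(k)}_i - \lambda^{(k-1)}_i) = |\lambda^{(k)}/\lambda^{(k-1)}| - \lambda^{(k)}_k$ collapses the product to $\prod_{k=1}^N q_k^{|\lambda^{(k)}/\lambda^{(k-1)}| - y_k'}$, which, via~\eqref{eq:kerK} and~\eqref{eq:kerLambda}, sums over interlacing sequences to exactly $\Lambda(\lambda,\bm{y}')$. The ensemble is automatically non-intersecting: at height $k \geq i+1$, path $\pi_i$ occupies the horizontal range $[\lambda^{(k-1)}_i - i, \, \lambda^{(k)}_i - i]$, and the interlacing $\lambda^{(k)}_{i+1} \leq \lambda^{(k-1)}_i$ gives $\lambda^{(k)}_{i+1} - (i+1) < \lambda^{(k-1)}_i - i$, separating the ranges of $\pi_i$ and $\pi_{i+1}$. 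The inverse map reads off $\lambda^{(k)}_i - i$ as the $x$-coordinate of $\pi_i$ just before its vertical step from height $k$ to $k+1$; partition-hood of each $\lambda^{(k)}$ and interlacing of the recovered sequence follow from the same non-intersection condition.

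Equality~\eqref{eq:pathLambda2} is then the LGV lemma applied to this ensemble. Since $\bm{y}', \lambda \in \sfW_N$, both the shifted starts $(y_i' - i)_{i=1}^N$ and ends $(\lambda_j - j)_{j=1}^N$ are strictly decreasing in their index, and since paths only move up or right, a standard exchange argument shows that the identity is the only permutation admitting a non-intersecting matching of starts to ends, so the LGV determinant collapses to the weight of the non-intersecting ensemble.

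For~\eqref{eq:pathLambda3}, a single path from $(y_i' - i, i)$ to $(\lambda_j - j, N)$ with first edge vertical is uniquely parameterised by the allocation $(h_{i+1}, \dots, h_N)$ of its horizontal edges across heights $i+1, \dots, N$, subject to $\sum_k h_k = \lambda_j - j - (y_i' - i)$ and with weight $\prod_k q_k^{h_k}$. Summing yields the complete symmetric polynomial $h_{\lambda_j - j - (y_i' - i)}(q_{i+1}, \dots, q_N)$, which by~\eqref{eq:Qdagger_h} equals $Q^\dagger_{(i,N]}(y_i' - i, \lambda_j - j)$. I expect the main subtlety to lie in the first step: the $\uparrow$-constraint is precisely what singles out a unique path per interlacing sequence (without it, horizontal edges at heights $i$ and $i+1$ could be reallocated), and the index rearrangement that matches $\prod_i \prod_{k>i} q_k^{\lambda^{(k)}_i - \lambda^{(k-1)}_i}$ with $\prod_k q_k^{|\lambda^{(k)}/\lambda^{(k-1)}|}$, up to the $\bm{y}'$-shift built into the definition of $\Lambda$, needs careful bookkeeping.
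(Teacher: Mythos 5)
Your proof is correct and follows the same route as the paper: a weight-preserving bijection between interlacing sequences with prescribed left edge and $\uparrow$-constrained non-intersecting $h$-paths, the Lindstr\"om--Gessel--Viennot lemma, and the identification of single-path weight generating functions with complete homogeneous symmetric polynomials via~\eqref{eq:Qdagger_h}. The paper merely cites Fulmek--Krattenthaler for the bijection where you spell it out, but the argument is the same.
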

\begin{proof}
The first equality is a rewriting of the definition of $\Lambda$ (see~\eqref{eq:kerLambda} and~\eqref{eq:kerK}) in terms of weights of non-intersecting path ensembles; see e.g.~\cite[Section 4]{fulmekKrattenthaler97} for a description of the connection between tableaux and non-intersecting lattice paths.
The second equality is an application of the Lindstr\"om--Gessel--Viennot theorem.
Note now that, by the definition of path weights and by the form of the complete symmetric polynomials, we have
\[
{\sfw\sft}\Big({\sfh\Pi}_{(y,i),\uparrow}^{(x,N)} \Big)
= h_{x-y}(0,\dots,0,q_{i+1},\dots,q_N).
\]
Combining this with~\eqref{eq:Qdagger_h}, we arrive at the third equality.
Notice that this proposition can be also seen as a reformulation of~\cite[Prop.~2]{diekerWarren08}.
\end{proof}
As stated in Proposition~\ref{prop:intertwining}, the operator $\Lambda$ is invertible:
We now provide a determinantal and 	path representation of its inverse.
\begin{proposition}[Path and local operator representation of $\Lambda^{-1}$]\label{prop:pathLambdaInv}
Let a vertical edge connecting $(x,i-1)$ to $(x,i)$ be assigned weight $1$ and a diagonal up-right edge connecting $(x,i-1)$ to $(x+1,i)$ be assigned weight $-q_{N-i+1}$, for $x\in \Z$ and $1\leq i\leq N-1$.
For $\bm{y}=(y_1\geq \cdots\geq y_N)$ and $\mu=(\mu_1\geq \cdots\geq \mu_N)$ in $\sfW_N$, the kernel $\Lambda^{-1}(\bm{y},\mu)$ can be written as
\begin{align}
\Lambda^{-1}(\bm{y},\mu) 
&= \det \Big( (-1)^{N-j} Q_{(j,N]}^{-1}(\mu_i-i, y_j-j)  \Big)_{1\leq i,j \leq N}
\label{eq:pathLambdaInv3} \\
&= \det \Big( \sfw\sft\Big(\sfe\Pi^{(y_j-j,N-j)}_{(\mu_i-i,0)} \Big)\Big)_{1\leq i,j \leq N} 
\label{eq:pathLambdaInv2}  \\
&= \sfw\sft\Big(\sfe\Pi^{\{(y_i-i,N-i) \colon 1\leq i\leq N\}}_{\{(\mu_i-i,0) \colon 1\leq i\leq N\}} \Big) .
\label{eq:pathLambdaInv1}
\end{align}
\end{proposition}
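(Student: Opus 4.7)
The plan is to establish the three asserted expressions in turn, then verify that the common kernel is indeed the inverse of $\Lambda$.

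First, to pass from the determinantal expression~\eqref{eq:pathLambdaInv3} involving $Q^{-1}$-entries to the single-$e$-path-weight form~\eqref{eq:pathLambdaInv2}, I would use identity~\eqref{eq:Qinverse_e} to write
\[
(-1)^{N-j}\, Q^{-1}_{(j,N]}(\mu_i - i,\, y_j - j) = (-1)^{d}\, e_{d}(q_{j+1}, \ldots, q_N),
\]
with $d := y_j - j - \mu_i + i$. On the path side, an $e$-path from $(\mu_i - i, 0)$ to $(y_j - j, N - j)$ traverses exactly $N - j$ upward levels, of which precisely $d$ must be diagonal up-right to produce the required horizontal displacement. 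Summing over the choices of which levels $1 \leq k_1 < \cdots < k_d \leq N - j$ carry diagonals, and recalling that a diagonal at level $k$ contributes weight $-q_{N - k + 1}$, gives
\[
\sum_{1 \leq k_1 < \cdots < k_d \leq N-j}\, \prod_{r=1}^{d} (-q_{N-k_r+1}) = (-1)^{d}\, e_{d}(q_{j+1}, \ldots, q_N),
\]
matching the operator expression.

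Second, the equality between~\eqref{eq:pathLambdaInv2} and~\eqref{eq:pathLambdaInv1} is a direct application of the Lindstr\"om--Gessel--Viennot theorem. Since $\mu, \bm{y} \in \sfW_N$, the sources $(\mu_i - i, 0)_{1 \leq i \leq N}$ and sinks $(y_j - j, N - j)_{1 \leq j \leq N}$ have strictly decreasing $x$-coordinates in $i$ and $j$ respectively, and the sinks also have strictly decreasing heights. Combined with the constraint that $e$-paths only move straight up or up-right, this arrangement rules out non-intersecting realizations under any non-identity matching, so the signed LGV sum collapses to the single non-intersecting ensemble weight, with the identity permutation contributing with sign $+1$.

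The remaining and most delicate step is to verify that the common value, call it $M(\bm{y}, \mu)$, satisfies $\Lambda \circ M = I$ on $\sfW_N$. My plan is to expand
\[
(\Lambda \circ M)(\lambda, \mu) = \sum_{\bm{y} \in \sfW_N} \det\bigl(Q^\dagger_{(i,N]}(y_i - i,\, \lambda_j - j)\bigr)_{ij}\, \det\bigl((-1)^{N-j}\, Q^{-1}_{(j,N]}(\mu_i - i,\, y_j - j)\bigr)_{ij}
\]
via an Andreief-type identity, aiming to reduce the inner sums over $\bm{y}$ to products of Toeplitz compositions of the form $Q^\dagger_\ell \circ Q^{-1}_\ell = I$ (which hold because $\varphi_{Q^\dagger_\ell}(z)\varphi_{Q^{-1}_\ell}(z) = 1$, by Proposition~\ref{prop:Toeplitz}). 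The main obstacle is that the summation runs over the Weyl chamber $\sfW_N$ rather than all of $\Z^N$, which blocks a one-step application of Andreief; moreover the index-dependent shifts $y_i - i$ and operator superscripts $(i,N]$ break the naive antisymmetry that would allow extension to $\Z^N$ modulo $N!$. I expect the cleanest route is induction on $N$: at each step, one peels off the row and column indexed by $N$ in both determinants via cofactor expansion, reduces to the single Toeplitz identity $Q^\dagger_N \circ Q^{-1}_N = I$, and applies the inductive hypothesis for size $N-1$. Alternatively, one may match~\eqref{eq:pathLambdaInv3} directly against the explicit inverse formula for $\Lambda$ given in~\cite[Prop.~3]{diekerWarren08}.
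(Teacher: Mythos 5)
Your treatment of the equivalences among the three displayed expressions is correct and matches the paper: \eqref{eq:Qinverse_e} converts the $Q^{-1}$-determinant \eqref{eq:pathLambdaInv3} into the single-path-weight determinant \eqref{eq:pathLambdaInv2}, and Lindstr\"om--Gessel--Viennot converts \eqref{eq:pathLambdaInv2} into the non-intersecting ensemble weight \eqref{eq:pathLambdaInv1}. Your bookkeeping of the diagonal weights $-q_{N-k+1}$ against $e_d(q_{j+1},\dots,q_N)$ is also right.

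However, the step you flag as ``the most delicate'' --- verifying $\Lambda\circ M = I$ --- is where your main proposed route has a genuine gap. You correctly identify the obstacles (the sum runs over $\sfW_N$ rather than $\Z^N$, the index-dependent shifts $y_i-i$ and the superscripts $(i,N]$ break the antisymmetry needed for an Andreief-type reduction), but you do not resolve them: the cofactor-expansion induction you sketch does not cleanly reduce to a single Toeplitz identity $Q_N^\dagger\circ Q_N^{-1}=I$, because peeling off the $N$-th row and column still leaves mixed $(i,N]$ and $(j,N]$ convolutions in the remaining $(N-1)\times(N-1)$ blocks, and the summation constraint $\bm{y}\in\sfW_N$ does not factorize. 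So as stated this is a plan with unsolved difficulties, not a proof.

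The paper sidesteps all of this: it simply invokes the explicit inverse formula from \cite[Prop.~3]{diekerWarren08},
\[
\Lambda^{-1}(\bm{y},\mu)
=\det \Big( (-1)^{y_j-\mu_i-j+i}\, e_{y_j-\mu_i-j+i}(0,\dots,0, q_{j+1},\dots,q_N) \Big)_{1\leq i,j \leq N},
\]
and then matches this against \eqref{eq:pathLambdaInv3} via \eqref{eq:Qinverse_e}. This is exactly the route you mention in your last sentence as an ``alternative,'' but you present it as a fallback rather than the primary argument. You should promote that citation to the main step; the Andreief/induction attempt as written is incomplete and would require substantial additional work to close.
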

\begin{proof}
It was proved in~\cite[Prop.~3]{diekerWarren08} that $\Lambda$ is invertible, with an inverse given by
\begin{align}
\label{eq:LambdaInv_elem}
\Lambda^{-1}(\bm{y},\,\mu)
=\det \Big( (-1)^{y_j-\mu_i-j+i} e_{y_j-\mu_i-j+i}(0,\dots,0, q_{j+1},\dots,q_N) \Big)_{1\leq i,j \leq N}.
\end{align}
This, together with~\eqref{eq:Qinverse_e}, yields the first equality.
On the other hand, from the form of the elementary symmetric functions, it is easy to see that
\begin{align*}
e_{y-x}(0,\dots,0, -q_{j+1},\dots,-q_N)
= \sfw\sft\Big(\sfe\Pi^{(y,N-j)}_{(x,0)} \Big).
\end{align*}
The latter, together with~\eqref{eq:LambdaInv_elem} and~\eqref{eq:Qinverse_e}, yields the second equality.
Finally, the third equality follows from the Lindstr\"om--Gessel--Viennot theorem.
\end{proof}
	
The proof of the following proposition follows the same lines as the proofs of Propositions~\ref{prop:pathLambda} and~\ref{prop:pathLambdaInv}, so we {will be brief}.
\begin{proposition}[Path and local operator representation of $\cR_{(r,t]}$]\label{prop:pathR}
Let $0\leq r< t$.
Let a vertical edge connecting $(x,i)$ to $(x,i+1)$ be assigned weight $1$ and a diagonal up-right edge connecting $(x,i)$ to $(x+1,i+1)$ be assigned weight $p_{i+1}$, for $x\in \Z$ and $r\leq i\leq t-1$.
Then, for $\lambda=(\lambda_1\geq \cdots \geq \lambda_N)$ and $\mu=(\mu_1\geq \cdots \geq \mu_N)$ in $\sfW_N$ with $\mu\subseteq\lambda$, the kernel $\cR_{(r,t]}(\mu,\lambda)$ defined in~\eqref{eq:kerR} can be written as
\begin{align}
Z^{\bm{p},\bm{q}}_{(r,t]} \cdot  \cR_{(r,t]}(\mu,\lambda) 
&= \sfw\sft\Big( \sfe\Pi_{\{(\mu_i-i,r)\colon 1\leq i\leq N\}}^{\{(\lambda_i-i,t) \colon 1\leq i\leq N\}} \Big) \label{eq:pathR1}\\
&= \det\Big( \sfw\sft\Big( \sfe\Pi_{(\mu_i-i,r)}^{(\lambda_j-j,t)} \Big) \Big)_{1\leq i,j\leq N} \label{eq:pathR2}\\
&=\det\Big( R_{(r,t]} (\mu_i-i,\lambda_j-j)\Big)_{1\leq i,j\leq N}, \label{eq:pathR3}
\end{align}
where the $R$-operators are defined in~\eqref{eq:R_operator}.
\end{proposition}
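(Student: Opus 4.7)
The plan is to establish the three equalities in the order they are stated, following essentially the same template as the proofs of Propositions~\ref{prop:pathLambda} and~\ref{prop:pathLambdaInv}. The starting point is the definition of $\cR_{(r,t]}(\mu,\lambda)$ in~\eqref{eq:kerR}, which expresses $Z^{\bm{p},\bm{q}}_{(r,t]} \cdot \cR_{(r,t]}(\mu,\lambda)$ as a weighted sum over interlacing sequences of conjugate partitions $\nu^{(r)}=\mu^{\top}\prec \nu^{(r+1)}\prec\cdots\prec\nu^{(t)}=\lambda^{\top}$. Transposing each $\nu^{(i)}$ turns these horizontal strips into vertical strips, so the sum can equivalently be rewritten over chains $\mu=\tilde{\nu}^{(r)}\subseteq\tilde{\nu}^{(r+1)}\subseteq\cdots\subseteq\tilde{\nu}^{(t)}=\lambda$ in which every skew shape $\tilde{\nu}^{(i)}/\tilde{\nu}^{(i-1)}$ is a vertical strip, weighted by $\prod_{i=r+1}^{t} p_i^{|\tilde{\nu}^{(i)}/\tilde{\nu}^{(i-1)}|}$ (using that conjugation preserves sizes of skew shapes).

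To obtain~\eqref{eq:pathR1}, I would invoke the standard bijection between such vertical-strip chains and non-intersecting $e$-path ensembles: the $k$-th path records the trajectory $i \mapsto \tilde{\nu}^{(i)}_k - k$, with the shift by $-k$ converting the interlacing condition $\tilde{\nu}^{(i-1)}_k \leq \tilde{\nu}^{(i)}_k \leq \tilde{\nu}^{(i-1)}_{k-1}$ into the non-intersection of consecutive shifted paths, and the binary increments $\tilde{\nu}^{(i)}_k - \tilde{\nu}^{(i-1)}_k\in\{0,1\}$ matching exactly the vertical/diagonal up-right steps of an $e$-path. Under the edge weights prescribed in the statement, a diagonal step from level $i-1$ to level $i$ contributes the factor $p_i$, so the product of path weights reproduces $\prod_{i=r+1}^{t} p_i^{|\tilde{\nu}^{(i)}/\tilde{\nu}^{(i-1)}|}$, yielding~\eqref{eq:pathR1}.

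Equality~\eqref{eq:pathR2} is then an immediate application of the Lindstr\"om--Gessel--Viennot lemma to the weighted acyclic digraph underlying the $e$-paths: the non-intersection constraint implies that no permutation other than the identity contributes to the determinantal expansion, because any pair of swapped endpoints would force an intersection due to the monotonicity of the shifted starting and ending positions.

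Finally, for~\eqref{eq:pathR3}, I would evaluate the single-path weight directly. A path in $\sfe\Pi_{(\mu_i-i,r)}^{(\lambda_j-j,t)}$ is determined by choosing, among the $t-r$ time steps, those at which a diagonal step is taken; if these correspond to indices $r<\ell_1<\cdots<\ell_k\leq t$ with $k=(\lambda_j-j)-(\mu_i-i)$, the weight is $p_{\ell_1}\cdots p_{\ell_k}$. Summing over all such choices gives precisely the expansion of the convolution $R_{r+1}\circ\cdots\circ R_t$ evaluated at $(\mu_i-i,\lambda_j-j)$, in view of the definition~\eqref{eq:R_operator} of $R_i$. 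The only mildly delicate step in the whole argument is ensuring that the shift $\mu_i\mapsto\mu_i-i$, $\lambda_j\mapsto\lambda_j-j$ correctly converts vertical-strip interlacing into non-intersection of $e$-paths, but this is a routine verification already implicit in Propositions~\ref{prop:pathLambda}--\ref{prop:pathLambdaInv}.
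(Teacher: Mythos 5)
Your proposal is correct and follows essentially the same three-step route as the paper: unpack the definition~\eqref{eq:kerR} of $\cR_{(r,t]}$ into a sum over vertical-strip chains $\mu=\tilde\nu^{(r)}\subseteq\cdots\subseteq\tilde\nu^{(t)}=\lambda$ (via conjugation), identify this sum with the weight of a non-intersecting $e$-path ensemble under the shift $\mu_i\mapsto\mu_i-i$, then apply Lindstr\"om--Gessel--Viennot for~\eqref{eq:pathR2} and the single-path expansion $R_{r+1}\circ\cdots\circ R_t$ for~\eqref{eq:pathR3}. One small imprecision: the condition you write as $\tilde{\nu}^{(i-1)}_k \leq \tilde{\nu}^{(i)}_k \leq \tilde{\nu}^{(i-1)}_{k-1}$ is the horizontal-strip interlacing and is \emph{not} in general satisfied along a vertical-strip chain (e.g.\ $\tilde\nu^{(i-1)}=(1,1)$, $\tilde\nu^{(i)}=(2,2)$); the non-intersection of the shifted paths is instead encoded by the same-level partition inequalities $\tilde\nu^{(i)}_{k}\leq\tilde\nu^{(i)}_{k-1}$ together with the $\{0,1\}$ increments, both of which you do correctly invoke, so the argument goes through unchanged.
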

\begin{proof}
{The first equality follows from the definition of $\cR_{(r,t]}(\mu,\lambda)$ in~\eqref{eq:kerR} and its representation in terms of weights of non-intersecting lattice paths.
The second equality is then a consequence of the Lindstr\"om--Gessel--Viennot theorem, while the last equality is a direct consequence of the representation of the weight of a single $e$-path ensemble in terms of the $R$-operator.}
\end{proof}
	
The following proposition provides a path representation of the transition kernel of $\dtasep(N;\bm{p},\bm{q})$, {obtained by path concatenation.} 
The graphical depiction of this result is shown in Figure~\ref{fig:paths}.
{Let us first explain what we mean by path concatenation, again referring to Figure~\ref{fig:paths} for an illustration.
Let $\Sigma^{(1)}$ and $\Sigma^{(2)}$ be two lattice path ensembles, each consisting of $N$-tuples of paths.
Suppose that, for all $(\pi^{(1)}_1,\dots,\pi^{(1)}_N)\in \Sigma^{(1)}$ and $(\pi^{(2)}_1,\dots,\pi^{(2)}_N)\in \Sigma^{(2)}$ and for all $j$, the endpoint of $\pi^{(1)}_j$ equals the starting point of $\pi^{(2)}_j$.
Then, we define the path concatenation $\Sigma_1\sqcup\Sigma_2$ to be the ensemble consisting of all paths $(\pi^{(1)}_1 \cup \pi^{(2)}_1,\dots,\pi^{(1)}_N \cup \pi^{(2)}_N)$, for some $(\pi^{(1)}_1,\dots,\pi^{(1)}_N)\in \Sigma^{(1)}$ and $(\pi^{(2)}_1,\dots,\pi^{(2)}_N)\in \Sigma^{(2)}$ (here, union of paths is understood in terms of both edges and vertices).}
\begin{proposition}[Path representation of $\dtasep$ transition kernel]\label{prop:pathTASEP}
Let $\bm{y}, \bm{y}'\in\sfW_N$ with $\bm{y} \subseteq \bm{y}'$.
The transition kernel of $\dtasep(N;\bm{p},\bm{q})$, encoding the probability that particles starting from locations $(y_1-1>y_2-2>\dots>y_N-N)$ at time $r$ end up at locations $(y_1'-1>y_2'-2>\dots>y_N'-N)$ at time $t$, admits the following weighted path representation:
\begin{equation}\label{eq:pathQ1}
\begin{split}
\cQ_{(r,t]}(\bm{y},\bm{y}') 
&= \frac{\big( \prod_{i=1}^N q_i^{y'_i-y_i}\big)}{Z^{\bm{p},\bm{q}}_{(r,t]}}  
\sum_{\substack{ \lambda, \mu \in \sfW_N\colon \\ \mu \subseteq \lambda, \, \mu\subseteq \bm{y}, \, \bm{y}'\subseteq \lambda }} \sfw\sft \bigg(	\sfh\Pi_{\{(y'_i-i,i) \colon 1\leq i\leq N\}, \uparrow}^{\{(\lambda_i-i,N) \colon 1\leq i\leq N\}} \, \bigsqcup \\
& \quad\, \bigsqcup \, (\sfe\Pi_r^*)_{\{(\lambda_i-i,N) \colon 1\leq i\leq N\}}^{\{(\mu_i-i,N+t-r) \colon 1\leq i\leq N\}} \,
\bigsqcup \,\sfe\Pi^{\{(y_i-i,2N+t-r-i) \colon 1\leq i\leq N\}}_{\{(\mu_i-i,N+t-r) \colon 1\leq i\leq N\}}  \bigg).
\end{split}
\end{equation}
The weights assigned to the edges are as follows:
\begin{itemize}
\item All vertical edges are assigned weight $1$.
\item Horizontal edges between $(x,i)$ and $(x+1,i)$ for $x\in\Z$ and $1\leq i\leq N$ are assigned weight $q_i$.
\item Diagonal, up-left edges between $(x,i)$ and $(x-1,i+1)$, for $x\in\Z$ and $N\leq i\leq N+t-r-1$, are assigned weight $p_{N+t-i}$.
\item Finally, diagonal, up-right edges between $(x,i)$ and $(x+1,i+1)$ for $x\in\Z$ and $N+t-r\leq i\leq 2N+t-r-2$, are assigned weight ${-q_{2N+t-r-i}}$.
\end{itemize}
Choose now $x_0$ so that $x_0-1< y_N-N$ and consider an auxiliary vector $\bm{x}^{(0)}=(x^{(0)}_1,\dots,x^{(0)}_{N})$, with $x^{(0)}_i=x_0-i$ for $1\leq i\leq N$.
Then, with the same assignment of weights and setting {$\widehat Z^{\bm{p},\bm{q}}_{(r,t]}:=Z^{\bm{p},\bm{q}}_{(r,t]} \prod_{i=1}^N q_i^{y_i-x_0}$}, we also have
\begin{equation}\label{eq:pathQ2}
\begin{split}
\cQ_{(r,t]}(\bm{y},\bm{y}') 
&= \frac{1}{\widehat Z^{\bm{p},\bm{q}}_{(r,t]}}
\sum_{\substack{ \lambda, \mu \in \sfW_N\colon \\ \mu \subseteq \lambda, \, \mu\subseteq \bm{y}, \, \bm{y}'\subseteq \lambda}} \sfw\sft \bigg(	{\sfh\Pi}^{\{(y'_i-i,i) \colon 1\leq i\leq N\}}_{\{(x_{i}^{(0)},i) \colon 1\leq i\leq N \}}
\,\bigsqcup \, \sfh\Pi_{\{(y'_i-i,i) \colon 1\leq i\leq N\} ,\uparrow}^{\{(\lambda_i-i,N) \colon 1\leq i\leq N\}} \,\bigsqcup \\
&\quad \,\bigsqcup\, ({\sfe\Pi}^*_r)_{\{(\lambda_i-i,N) \colon 1\leq i\leq N\}}^{\{(\mu_i-i,N+t-r) \colon 1\leq i\leq N\}}
\, \bigsqcup \,\sfe\Pi^{\{(y_i-i,2N+t-r-i) \colon 1\leq i\leq N\}}_{\{(\mu_i-i,N+t-r) \colon 1\leq i\leq N\}}  \bigg) .
\end{split}
\end{equation}
\end{proposition}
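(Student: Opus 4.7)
The proof goes by bringing together the three path/operator representations of Propositions \ref{prop:pathLambda}, \ref{prop:pathLambdaInv}, \ref{prop:pathR} via the intertwining relation \eqref{eq:kerQ_LambdaInverse}. More precisely, by Propositions \ref{prop:intertwining} and the definition of $\hat\cQ_{(r,t]}$ in \eqref{eq:kerLambda}, one has
\begin{equation*}
\cQ_{(r,t]}(\bm{y},\bm{y}') = \Bigg(\prod_{j=1}^N q_j^{y'_j-y_j}\Bigg) \sum_{\mu,\lambda\in \sfW_N} \Lambda^{-1}(\bm{y},\mu)\,\cR_{(r,t]}(\mu,\lambda)\,\Lambda(\lambda,\bm{y}') ,
\end{equation*}
which will be my starting point. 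The plan is then to replace the three factors under the sum by their weighted-path representations and to recognize the result as the weight of a single concatenated non-intersecting path ensemble.

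First I would apply Proposition \ref{prop:pathLambda} to write $\Lambda(\lambda,\bm{y}')$ as the weight of the $h$-path ensemble with starting points $(y'_i-i,i)$ and endpoints $(\lambda_i-i,N)$ (with obligatory first vertical step); Proposition \ref{prop:pathR} to write $Z^{\bm{p},\bm{q}}_{(r,t]}\cR_{(r,t]}(\mu,\lambda)$ as the weight of the $e$-path ensemble with starting points $(\mu_i-i,r)$ and endpoints $(\lambda_i-i,t)$; and Proposition \ref{prop:pathLambdaInv} to write $\Lambda^{-1}(\bm{y},\mu)$ as the weight of the $e$-path ensemble with starting points $(\mu_i-i,0)$ and endpoints $(y_i-i,N-i)$ (with the signed weights $-q_{N-i+1}$ on the diagonal edges). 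I would then perform a height-shift/reflection on the middle $e$-path ensemble: sliding the $r$-height to $N$ and the $t$-height to $N+t-r$, and reversing the direction of traversal so that up-right edges become up-left, which accounts for the appearance of $\sfe\Pi^*$ and for the weight $p_{N+t-i}$ on the up-left edge at height $i$. Similarly, I would shift the $\Lambda^{-1}$ ensemble upward by $N+t-r$, taking it to the height range $[N+t-r,\,2N+t-r-i]$, which reproduces the height labels stated in the proposition and leaves the signed diagonal weights $-q_{2N+t-r-i}$.

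With the three shifted ensembles in place, the endpoints of one sub-ensemble coincide with the starting points of the next, at the heights $N$ and $N+t-r$ respectively. Hence the three pieces can be concatenated path-by-path, and since the paths within each piece are non-intersecting while the three pieces occupy disjoint height ranges (meeting only at the gluing heights $N$ and $N+t-r$), the resulting tuple of concatenated paths is automatically non-intersecting. The multiplicativity of edge weights then yields the total weight of the concatenated ensemble as the product of the three individual weights, which—after summing over the intermediate partitions $\mu,\lambda$—matches exactly the right-hand side of \eqref{eq:pathQ1}, up to the constraints $\mu\subseteq\lambda$, $\mu\subseteq\bm{y}$, $\bm{y}'\subseteq\lambda$ that are automatically enforced by the monotonicity of the paths and the $\sfW_N$-ordering of the endpoints.

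For the second representation \eqref{eq:pathQ2}, the only thing left is to absorb the prefactor $\prod_i q_i^{y_i'-y_i}$ into an additional path ensemble. I would write $\prod_{i=1}^N q_i^{y'_i-y_i} = \prod_{i=1}^N q_i^{y'_i-x_0}\cdot \prod_{i=1}^N q_i^{x_0-y_i}$; the first factor is precisely the weight of the ensemble $\sfh\Pi^{\{(y'_i-i,i)\}}_{\{(x^{(0)}_i,i)\}}$ consisting of $N$ purely horizontal paths at heights $1,\dots,N$ (each contributing $q_i^{y'_i-x_0}$), while the second factor combines with $1/Z^{\bm{p},\bm{q}}_{(r,t]}$ to give $1/\widehat{Z}^{\bm{p},\bm{q}}_{(r,t]}$. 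The choice $x_0-1<y_N-N$ ensures that these horizontal paths sit strictly to the left of the rest of the diagram and, together with the forced upward first step of the adjacent $h$-path ensemble, preserves non-intersection across the new junction at heights $i$.

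The main obstacle I anticipate is keeping strict bookkeeping across the coordinate changes: the height shift by $N$ for the middle ensemble, the reflection that turns up-right into up-left (and turns the $p_{\ell}$-indexing of \eqref{eq:R_operator} into $p_{N+t-i}$), and the additional shift by $N+t-r$ with the relabelling of the signed weights from $-q_{N-i+1}$ to $-q_{2N+t-r-i}$. Once these are verified, non-intersection of the fully concatenated ensemble and equality with the weight of the triple sum follow transparently.
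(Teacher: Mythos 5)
Your proof follows the paper's argument almost verbatim: substitute the path representations from Propositions~\ref{prop:pathLambda}, \ref{prop:pathLambdaInv} and \ref{prop:pathR} into the intertwining formula $\cQ_{(r,t]}=\bigl(\prod_i q_i^{y_i'-y_i}\bigr)\,\Lambda^{-1}\cR_{(r,t]}\Lambda$, flip the middle $\cR$-ensemble upside down into an $\sfe\Pi^*$-ensemble, concatenate at the gluing heights, and for~\eqref{eq:pathQ2} multiply and divide by $\prod_i q_i^{-x_0}$ to absorb the prefactor into a horizontal $h$-path ensemble starting from $(x_i^{(0)},i)$. One small bookkeeping slip: under the flip-and-shift of the middle ensemble it is the $t$-level that lands at $N$ and the $r$-level that lands at $N+t-r$, not the other way round; this is consistent with (and required by) the reversal of traversal you already invoke, so the overall argument is unaffected.
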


\begin{figure}
\begin{tikzpicture}[scale=0.45, font=\small]
			\foreach \i in {0,1,...,19}{
				\draw[dotted] (0, \i  ) grid (28, \i );
			}
			\draw[thin] (0,15)--(28,15);
			\draw[thin] (0,5)--(28,5);
			\node[draw,red, circle,inner sep=1.5pt,fill] at (8, 5) {}; \node[draw,red, circle,inner sep=1.5pt,fill] at (5, 15) {};
			\node at (8,4.5) {$y_5'\!-\!5=\lambda_5\!-\!5$};
			\node at (5,15.5) {$y_5\!-\!5=\mu_5\!-\!5$};
			\draw[ultra thick, red]  (0,5)--(8,5)--(8,6)--(8,7)--(8,8)--(8,9)--(7,10)--(7,11)--(7,12)--(7,13)--(6,14)--(5,15);
			\draw[ultra thick, dashed, red] (0,5)--(0,0);
			\node[draw,blue, circle,inner sep=1.5pt,fill] at (11, 4) {}; \node[draw,blue, circle,inner sep=1.5pt,fill] at (15, 5) {};
			\node[draw,blue, circle,inner sep=1.5pt,fill] at (10, 15) {};\node[draw,blue, circle,inner sep=1.5pt,fill] at (10, 16) {};
			\node at (15,4.5) {$\lambda_4\!-\!4$}; \node at (11,3.5) {$y_4'\!-\!4$};
			\node at (10, 16.5) {$y_4\!-\!4$}; \node at (8.8, 14.5) {$\mu_4\!-\!4$};
			\draw[ultra thick, blue]  (1,4)--(11,4)--(11,5)--(15,5)--(14,6)--(13,7)--(13,8)--(13,9)--(12,10)--(12,11)--(11,12)--(10,13)--(10,14)--(10,15)--(10,16); \draw[ultra thick, dashed, blue] (1,4)--(1,0);
			\node[draw,orange, circle,inner sep=1.5pt,fill] at (16, 3) {}; \node[draw,orange, circle,inner sep=1.5pt,fill] at (17, 4) {};
			\node[draw,orange, circle,inner sep=1.5pt,fill] at (18, 5) {}; \node[draw,orange, circle,inner sep=1.5pt,fill] at (13, 15) {};
			\node[draw,orange, circle,inner sep=1.5pt,fill] at (14, 17) {}; \node at (16, 2.5) {$y_3'\!-\!3$}; \node at (18.5, 4.5) {$\lambda_3\!-\!3$};
			\node at (14, 17.5) {$y_3\!-\!3$}; \node at (11.9, 14.5) {$\mu_3\!-\!3$};
			\draw[ultra thick, orange]  (2,3)--(16,3)--(16,4)--(17,4)--(17,5)--(18,5)--(18,6)--(17,7)--(16,8)--(15,9)--(15,10)--(14,11)--(14,12)--(14,13)--(13,14)--(13,15)--(14,16)--(14,17); \draw[ultra thick, dashed, orange] (2,3)--(2,0);
			\node[draw,brown, circle,inner sep=1.5pt,fill] at (18, 2) {}; \node at (18, 1.5) {$y_2'\!-\!2$}; \node at (22.2, 4.5) {$\lambda_2\!-\!2$};
			\node[draw,brown, circle,inner sep=1.5pt,fill] at (21, 4) {}; \node[draw,brown, circle,inner sep=1.5pt,fill] at (19, 3) {};  
			\node[draw,brown, circle,inner sep=1.5pt,fill] at (22, 5) {}; \node[draw,brown, circle,inner sep=1.5pt,fill] at (18, 18) {}; 
			\node[draw,brown, circle,inner sep=1.5pt,fill] at (17, 15) {};  \node at (18, 18.5) {$y_2\!-\!2$}; \node at (16, 14.5) {$\mu_2\!-\!2$};
			\draw[ultra thick, brown]  (3,2)--(18,2)--(18,3)--(19,3)--(19,4)--(21,4)--(21,5)--(22,5)--(21,6)--(20,7)--(20,8)--(20,9)--(20,10)--(19,11)--(19,12)--(18,13)--(18,14)--(17,15)--(17,16)--(17,17)--(18,18); 
			\draw[ultra thick, dashed, brown] (3,2)--(3,0);
			\node[draw,purple, circle,inner sep=1.5pt,fill] at (22, 1) {};  \node[draw,purple, circle,inner sep=1.5pt,fill] at (25, 5) {}; 
			\node[draw,purple, circle,inner sep=1.5pt,fill] at (25, 4) {}; \node[draw,purple, circle,inner sep=1.5pt,fill] at (25, 3) {}; 
			\node[draw,purple, circle,inner sep=1.5pt,fill] at (23, 2) {};
			\node[draw,purple, circle,inner sep=1.5pt,fill] at (20, 15) {};   \node[draw,purple, circle,inner sep=1.5pt,fill] at (21, 19) {};
			\node at (21,19.5) {$y_1\!-\!1$};  \node at (19,14.5) {$\mu_1\!-\!1$};
			\node at (22,0.5) {$y_1'-1$}; \node at (26.3 ,4.5) {$\lambda_1\!-\!1$};
			\draw[ultra thick, purple]  (4,1)--(22,1)--(22,2)--(23,2)--(23,3)--(25,3)--(25,5)--(24,6)--(23,7)--(23,8)--(22,9)--(21,10)--(20,11)--(20,12)--(20,13)--(20,14)--(20,15)--(20,16)--(21,17)--(21,19);
			\draw[ultra thick, dashed, purple] (4,1)--(4,0);
			\node at (0.5,18.5) {$-q_2$}; \node at (0.5,17.5) {$-q_3$}; \node at (0.5,16.5) {$-q_4$}; \node at (0.5,15.5) {$-q_5$};
			\node at (0.5,14.5) {$p_{r+1}$}; \node at (0.5,13.5) {$p_{r+2}$}; \node at (0.5,6.5) {$p_{t-1}$}; \node at (0.5,5.5) {$p_t$};
			\node at (0.5,12.5) {$\vdots$}; \node at (0.5,11.5) {$\vdots$}; \node at (0.5,10.5) {$\vdots$};   \node at (0.5,9.5) {$\vdots$}; \node at (0.5,8.5) {$\vdots$}; \node at (0.5,7.5) {$\vdots$}; 
			\node at (-1,5) {$q_5$}; \node at (-1,4) {$q_4$}; \node at (-1,3) {$q_3$}; \node at (-1,2) {$q_2$}; \node at (-1,1) {$q_1$};
			\node at (-1,0) {$0$}; \node at (4,-0.8) {$x^{(0)}_1$}; \node at (0,-0.8) {$x^{(0)}_N$};
			\node at (2,-0.8) {$\cdots\cdots$};
\foreach \j in {1,...,4}{
			\node[draw,red,circle,inner sep=1.5pt,fill] at (0, \j) {};}
\foreach \j in {1,...,3}{
			\node[draw,blue,circle,inner sep=1.5pt,fill] at (1, \j) {};}
\foreach \j in {1,...,2}{
			\node[draw,orange, circle,inner sep=1.5pt,fill] at (2, \j) {};}
\foreach \j in {1,...,1}{
			\node[draw,brown, circle,inner sep=1.5pt,fill] at (3, \j) {};}
\end{tikzpicture}
\caption{Non-intersecting path representation of the transition kernel of $\dtasep$ particles, as in~\eqref{eq:pathQ2}.
The figure refers to the transition probability of five $\dtasep$ particles, starting from locations $(y_1-1>y_2-2>\dots>y_5-5)$ at time $r$ and ending at locations $(y_1'-1>y_2'-2>\dots>y_5'-5)$ at time $t$. 
Note that the paths do \emph{not} depict the actual trajectories of the particles.
The weights assigned to all vertical steps are equal to $1$.
In the bottom part of the figure, paths move either vertically up or horizontally to the right, with horizontal weights $q_1,\dots,q_5$, as shown on the left-hand side of the figure.
In the middle part, paths move vertically up or diagonally up-left, with diagonal weights $p_t, p_{t-1}, \dots, p_{r+2},p_{r+1}$, as shown.
In the top part, paths move either vertically up or diagonally up-right, with diagonal weights $q_5,\dots,q_2$, as shown.
The solid paths can be extended in such a way that they all start at level zero and include the dashed colored lines in the bottom-left part of the picture; due to the non-intersecting property and the fact that vertical weights are assigned weight $1$, such an extension does not change the weight of the ensemble (provided that the horizontal edges on the bottom level are assigned weight $0$).
The bullets in the bottom part of the figure refer to the point processes $\sfX_N$ and $\overline{\sfX}_N$ from~\S\ref{subsec:DPP}.}
\label{fig:paths}
\end{figure}

\begin{proof}
By Proposition~\ref{prop:intertwining}, we have
\begin{equation}\label{eq:Q_threeParts}
\begin{split}
\cQ_{(r,t]}(\bm{y},\bm{y}') 
&= \Bigg( \prod_{i=1}^N q_i^{y_i'-y_i}\Bigg)
\big(\Lambda^{-1} \cR_{(r,t]}  \Lambda \big)(\bm{y}, \bm{y}') \\
&= \Bigg( \prod_{i=1}^N q_i^{y_i'-y_i}\Bigg)
\sum_{\lambda, \mu}  \Lambda^{-1}(\bm{y},\mu) \, \cR_{(r,t]}(\mu,\lambda) \, \Lambda(\lambda,\bm{y}'),
\end{split}
\end{equation}
where the summation is over all partitions $\lambda,\mu\in \sfW_N$ such that $\mu \subseteq\lambda$, $\mu\subseteq \bm{y}$ and $\bm{y}' \subseteq \lambda$.
We now concatenate the non-intersecting paths corresponding to the operators $\Lambda^{-1}$, $\cR_{(r,t]}$ and $\Lambda$, as given in Propositions~\ref{prop:pathLambdaInv}, \ref{prop:pathR} and \ref{prop:pathLambda}, respectively, and as shown in Figure~\ref{fig:paths}.
The only point to notice is that, compared to Proposition~\ref{prop:pathR}, the paths corresponding to the operator $\cR_{(r,t]}$ are flipped upside down, resulting in the `reverse' ensemble $({\sfe\Pi}^*_r)_{\{(\lambda_i-i,N) \colon 1\leq i\leq N\}}^{\{(\mu_i-i,N+t-r) \colon 1\leq i\leq N\}}$, in which paths move either upwards or in the up-left direction at each step, starting from the points $\lambda_i-i$, $1\leq i\leq N$, at level $N$ and ending at the points $\mu_i-i$, $1\leq i\leq N$, at level $N+t-r$.
Thus, using equations~\eqref{eq:pathLambda1}-\eqref{eq:pathLambdaInv1}-\eqref{eq:pathR1} and the weights defined in the proposition, we obtain
\[
\begin{split}
\cQ_{(r,t]}(\bm{y},\bm{y}'\, ) 
&= \frac{\big( \prod_{i=1}^N q_i^{y'_i-y_i}\big)}{Z^{\bm{p},\bm{q}}_{(r,t]}}  
\sum_{\lambda,\mu } 
\sfw\sft \Big( \sfh\Pi_{\{(y'_i-i,i) \colon 1\leq i\leq N\}, \uparrow}^{\{(\lambda_i-i,N) \colon 1\leq i\leq N\}} \Big) \cdot \\
&\quad\, \cdot \sfw\sft\Big( (\sfe\Pi_r^*)_{\{(\lambda_i-i,N) \colon 1\leq i\leq N\}}^{\{(\mu_i-i,N+t-r) \colon 1\leq i\leq N\}} \Big)
\cdot \sfw\sft\Big(\sfe\Pi^{\{(y_i-i,2N+t-r-i) \colon 1\leq i\leq N\}}_{\{(\mu_i-i,N+t-r) \colon i=1,\dots,N\}}  \Big),
\end{split}
\]
which is the same as~\eqref{eq:pathQ1}.

We now rewrite~\eqref{eq:pathQ1}, by multiplying and dividing by {$\prod_{i=1}^N q_i^{-x_0}$}, as
\[
\begin{split}
\cQ_{(r,t]}(\bm{y},\bm{y}') 
&=  \frac{\prod_{i=1}^N q_i^{y'_i-x_0}}{Z^{\bm{p},\bm{q}}_{(r,t]} \prod_{i=1}^N q_i^{y_i-x_0} } 
\sum_{\lambda, \mu} \sfw\sft \bigg(\sfh\Pi_{\{(y'_i-i,i) \colon 1\leq i\leq N\}, \uparrow}^{\{(\lambda_i-i,N) \colon 1\leq i\leq N\}} \,\bigsqcup \\
&\quad\, \bigsqcup \, (\sfe\Pi_r^*)_{\{(\lambda_i-i,N) \colon 1\leq i\leq N\}}^{\{(\mu_i-i,N+t-r) \colon 1\leq i\leq N\}}
\,\bigsqcup\,\sfe\Pi^{\{(y_i-i,2N+t-r-i) \colon 1\leq i\leq N\}}_{\{(\mu_i-i,N+t-r) \colon 1\leq i\leq N\}}  \bigg).
\end{split}
\]
The denominator equals $\widehat Z^{\bm{p},\bm{q}}_{(r,t]}$.
Moreover, the product {$\prod_{i=1}^N q_i^{y'_i-x_0}=\prod_{i=1}^N q_i^{(y'_i-i)-x^{(0)}_i}$} equals the total weight of the path ensemble ${\sfh\Pi}^{\{(y'_i-i,i) \colon 1\leq i\leq N\}}_{\{(x_{i}^{(0)},i) \colon 1\leq i\leq N \}}$, {since the $h$-weight of a single path from $(x_i^{(0)},i)$ to $(y_i'-i,i)$ (see bottom-left end of the path depictions in Figure~\ref{fig:paths}) is $q_i^{(y'_i-i)-x^{(0)}_i}$.}
Notice that such a path ensemble is nonempty, due to the hypothesis $x_0-1< y_N-N$, which implies
\[
x_i^{(0)}=x_0-i< y_N-N\leq y_N'-N\leq y'_i-i
\]
for $1\leq i\leq N$.
This readily yields~\eqref{eq:pathQ2}.
\end{proof}

\subsection{Determinantal point processes}
\label{subsec:DPP}
	
We now describe the law of $\dtasep(N;\bm{p},\bm{q})$ as a marginal of a determinantal point process, which we now build out of the above path construction.
The configuration space will be integer arrays 
\begin{align}\label{eq:triangArray}
\sfX_N:= \{x^{(i)}_j\}_{ 1 \leq j\leq i \leq N} \qquad \text{with}\qquad  x^{(i+1)}_{j+1} < x^{(i)}_j\leq x^{(i+1)}_j.
\end{align}
Such a point process naturally arises from the non-intersecting path construction given in Proposition~\ref{prop:pathTASEP} (see in particular~\eqref{eq:pathQ2}) and illustrated in the bottom part of Figure~\ref{fig:paths}.
For $1\leq j\leq i\leq N$, the point $x_j^{(i)}$ of $\sfX_N$ is identified with the rightmost point on the horizontal line $\{(x,i)\colon x\in\Z\}$ of the $j$-th path (enumerating the paths from right to left).
We will consider~\eqref{eq:triangArray} as a point process on $\Z\times \{1,\dots,N\}$, such that the line $\{i\}\times \Z$ has exactly $i$ points $x^{(i)}_1,\dots,x^{(i)}_i$, for $1\leq i\leq N$.
However, for brevity and when there is no ambiguity, we will usually write 	$x^{(i)}_j$ instead of $(x^{(i)}_j,i)$.
We note that the non-intersecting property of the paths enforces the inequalities in~\eqref{eq:triangArray}.
It will be useful to extend the above triangular array to a square array with additional frozen points on every line:
\begin{equation}\label{eq:triangArrayExt}
\begin{split}
\overline{\sfX}_N:= \{&x^{(i)}_j\}_{i,j=1}^ {N} \qquad \text{with}\qquad  x^{(i+1)}_{j+1} < x^{(i)}_j\leq x^{(i+1)}_j
\qquad\text{and} \\
& x^{(i)}_j := (x_0-j, i) \qquad \text{for} \qquad 1\leq i<j\leq N.
\end{split}
\end{equation}
The auxiliary points $x^{(i)}_j$ with $ 1\leq i<j\leq N$ are illustrated as the `frozen' bullets in the bottom-left part of Figure~\ref{fig:paths}.
As in Proposition~\ref{prop:pathTASEP}, the point $x_0$ is chosen arbitrarily but	such that $x_0 -1< y_N-N$.
We will also use the notation $\bm{x}^{(0)}:=(x^{(0)}_1,x^{(0)}_2,\dots,x^{(0)}_N)$, with $x^{(0)}_i:=x_0-i$, for $i=1,\dots,N$.
The freezing is, again, due to the non-intersecting nature of the extended paths (i.e., the paths that start from $\{(x^{(0)}_i,0)\colon 1\leq i\leq N\}$ and include the dashed lines) in Figure~\ref{fig:paths}.
	
For $0\leq r<t$, $1\leq k\leq N$ and $x\in\Z$, we now define the family of functions
\begin{equation}
\label{eq:Psi_def}
\begin{split}
\Psi^{(N)}_{k} (x)
&:=\sum_{z\in\Z} Q_N^{-1}\circ\cdots\circ Q_{k+1}^{-1}(z,y_k-k) \, R_{r+1}\circ\cdots\circ R_{t}(z,x) \\
&=\sum_{z\in\Z} Q_N^{-1}\circ\cdots\circ Q_{k+1}^{-1}(z,y_k-k) \, R^*_{t}\circ\cdots\circ R^*_{r+1}(x,z) \\
&= R^*_{t}\circ\cdots\circ R^*_{r+1}\circ  Q_N^{-1}\circ\cdots\circ Q_{k+1} ^{-1}(x,y_k-k)  \\
&= R^*_{(r,t]}\circ Q^{-1}_{(k,N]} (x,y_k-k) ,
\end{split}
\end{equation}
where, as usual, $R^{*}_{(r,t]}$ denotes the adjoint of $R_{(r,t]}$, i.e.\ $R^{*}_{(r,t]}(x,y):= R_{(r,t]}(y,x)$.
Notice that the function $\Psi^{(N)}_{k} (x)$ depends implicitly on $\bm{y}=(y_1\geq \dots \geq y_N)$.
With reference to Figure~\ref{fig:paths}, {$(-1)^{N-k}\Psi^{(N)}_{k} (x)$} captures the weight of a path starting at location $(x,N)$ on the lower solid black line and ending at $(y_k-k, 2N+t-r-k)$ at the top of the figure (passing by any $z$ on the upper solid black line).
{Note also that, even though the sums in the first two lines of~\eqref{eq:Psi_def} are over the $\Z$, these sums actually have only a finite number of non-zero terms, since the values of $x$ and $y_k-k$ are fixed.}

Observe that the left edge of the triangular array $\sfX_N$, i.e.\ $\ledge( \sfX_N ):=(x^{(i)}_i\colon 1\leq i\leq N)$, coincides with the terminal positions $(y_1'-1>y_2'-2>\dots>y_N'-N)$ of the $\dtasep(N;\bm{p},\bm{q})$ particles, by Proposition~\ref{prop:pathTASEP}.
Thanks to this, we are able to obtain an initial representation of the transition kernel of $\dtasep(N;\bm{p},\bm{q})$ as	a marginal of the determinantal point process $\sfX_N$.
\begin{proposition}
\label{prop:DPPmarginal}
Let $\bm{y}\in\sfW_N$.
Let $x_0\in\Z$ with $x_0 -1< y_N-N$ and write $\bm{x}^{(0)}:=(x^{(0)}_1,x^{(0)}_2,\dots,x^{(0)}_N)$, where $x^{(0)}_i:=x_0-i$.
Define the {(signed)} determinantal measure
\begin{equation}\label{eq:DPP}
\begin{split}
\bbP\big( \sfX_N {\;\big|\;\bm{y}} \big)
:= \frac{1}{\widehat Z^{\bm{p},\bm{q}}_{(r,t]}}
&\left(\prod_{k=1}^{N} \det \Big( Q^\dagger_{k}(x^{(k-1)}_i,x^{(k)}_j) \Big)_{i,j=1}^{k} \right) \\
&\cdot \det\Big((-1)^{N-i} \,\Psi^{(N)}_{i} (x^{(N)}_j) \Big)_{i,j=1}^{N}
\end{split}
\end{equation}
on configurations $\sfX_N= \{x^{(i)}_j\}_{ 1 \leq j\leq i \leq N} $ with $x^{(i+1)}_{j+1} < x^{(i)}_j\leq x^{(i+1)}_j$, where we have set  $x^{(k-1)}_k=x^{(0)}_k$ for $1\leq k\leq N$, {$\widehat Z^{\bm{p},\bm{q}}_{(r,t]}:=Z^{\bm{p},\bm{q}}_{(r,t]} \prod_{i=1}^N q_i^{y_i-x_0}$} and $Z^{\bm{p},\bm{q}}_{(r,t]} $ as in~\eqref{eq:normalization} {(note that the right-hand side of~\eqref{eq:DPP} depends on $\bm{y}$ through the $\Psi$-functions).}
The determinantal measure does not depend on the auxiliary point $x_0$, as long as the condition $x_0-1<y_N-N$ holds.
Then, the transition kernel of $\dtasep(N;\bm{p},\bm{q})$, encoding the probability that particles starting from locations $(y_1-1>y_2-2>\dots>y_N-N)$ at time $r$ end up at locations $(y_1'-1>y_2'-2>\dots>y_N'-N)$ at time $t$, is given by
\begin{align}\label{eq:Q_marginalDPP}
&\cQ_{(r,t]}(\bm{y},\bm{y}') 
= \bbP\big( \ledge\big( \sfX_N \big) =(y_1'-1>y_2'-2>\dots>y_N'-N) \;\big|\; \bm{y}\big).
\end{align}
\end{proposition}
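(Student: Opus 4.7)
The plan is to read equation~\eqref{eq:pathQ2} of Proposition~\ref{prop:pathTASEP} through the lens of the point process $\sfX_N$, which records, for each of the $N$ non-intersecting paths in the bottom portion of Figure~\ref{fig:paths}, the rightmost position $x^{(k)}_j$ on level $k\in\{1,\dots,N\}$ of the $j$-th path, together with the frozen values $x^{(i)}_j=x_0-j$ for $i<j$ as in~\eqref{eq:triangArrayExt}. The non-intersecting property of the paths translates verbatim into the interlacing constraints $x^{(i+1)}_{j+1}<x^{(i)}_j\leq x^{(i+1)}_j$, the summation over $\lambda$ in~\eqref{eq:pathQ2} becomes a summation over the level-$N$ row $x^{(N)}_1>\dots>x^{(N)}_N$ of $\sfX_N$, and the constraint $\ledge(\sfX_N)=(y'_1-1,\dots,y'_N-N)$ encodes the fact that path $i$ turns vertical on level $i$ at $(y'_i-i,i)$.

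For the bottom portion, the decomposition factorizes level by level. Between levels $k-1$ and $k$, the segment of path $j$ consists of a vertical step followed by horizontal steps on level $k$, so it is uniquely determined by $x^{(k-1)}_j$ and $x^{(k)}_j$ and contributes weight $Q^\dagger_k(x^{(k-1)}_j,x^{(k)}_j)$. Thanks to the interlacing $x^{(k)}_{j+1}<x^{(k-1)}_j$, the $k\times k$ matrix $\bigl(Q^\dagger_k(x^{(k-1)}_i,x^{(k)}_j)\bigr)_{i,j=1}^k$ is lower triangular (entries above the diagonal vanish because $Q^\dagger_k(x,y)=0$ whenever $y<x$), and so its determinant equals the product of its diagonal entries. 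This identifies the total weight of the bottom portion with the product of determinants $\prod_{k=1}^N\det\bigl(Q^\dagger_k(x^{(k-1)}_i,x^{(k)}_j)\bigr)_{i,j=1}^k$ appearing in~\eqref{eq:DPP}.

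For the middle and top portions of~\eqref{eq:pathQ2}, the plan is to treat them as a single ensemble of $N$ non-intersecting paths starting at $(x^{(N)}_j,N)$ and ending at $(y_k-k,2N+t-r-k)$ for $k=1,\dots,N$. By the definition~\eqref{eq:Psi_def} of $\Psi^{(N)}_k$, together with the identification of $R^*_{(r,t]}$ and $(-1)^{N-k}Q^{-1}_{(k,N]}$ with the signed single-path weights of the middle and top, respectively, the weight of a single path from $(x^{(N)}_j,N)$ to $(y_k-k,2N+t-r-k)$ is exactly $(-1)^{N-k}\Psi^{(N)}_k(x^{(N)}_j)$. Applying the Lindstr\"om--Gessel--Viennot theorem to this non-intersecting ensemble yields the determinant $\det\bigl((-1)^{N-i}\Psi^{(N)}_i(x^{(N)}_j)\bigr)_{i,j=1}^N$ in~\eqref{eq:DPP}. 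Assembling the two contributions, dividing by $\widehat Z^{\bm{p},\bm{q}}_{(r,t]}$, and summing over configurations with $x^{(i)}_i=y'_i-i$ fixed recovers~\eqref{eq:pathQ2} and hence yields~\eqref{eq:Q_marginalDPP}. The $x_0$-independence is transparent: the frozen entries contribute $Q^\dagger_k(x_0-i,x_0-i)=1$ for $i>k$, and the factors $q_i^{-x_0}$ in $\widehat Z^{\bm{p},\bm{q}}_{(r,t]}$ cancel exactly against the $x_0$-dependence of the diagonal entries $Q^\dagger_i(x_0-i,y'_i-i)=q_i^{y'_i-x_0}$.

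The hard part will be the application of Lindstr\"om--Gessel--Viennot to the middle+top ensemble, whose paths end at \emph{different} levels $2N+t-r-k$. The cleanest workaround is to augment each path by a vertical tail (weight $1$) up to a common top level: vertical edges in the top portion carry weight $1$, so the augmentation preserves all weights, and Lindstr\"om's non-crossing condition can then be verified from the strict orderings $x^{(N)}_1>\dots>x^{(N)}_N$ on level $N$ and $y_1-1>\dots>y_N-N$ on the common top level. The signs $(-1)^{N-k}$ coming from the $Q^{-1}_{(k,N]}$ operators are absorbed cleanly into the rows of the determinant as in~\eqref{eq:DPP}.
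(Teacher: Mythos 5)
Your proposal is correct and reaches the same conclusion, but by a different route. The paper's proof is purely algebraic: it starts from the intertwining formula~\eqref{eq:Q_threeParts} for $\cQ_{(r,t]}=\big(\prod_i q_i^{y_i'-y_i}\big)\Lambda^{-1}\cR_{(r,t]}\Lambda$, substitutes the determinant expressions~\eqref{eq:pathLambdaInv3}, \eqref{eq:pathR3}, \eqref{eq:pathLambda3}, and then runs a chain of Cauchy--Binet identities --- first to collapse the sum over $\mu$ into the $\Psi^{(N)}$-determinant, then to expand each $Q^\dagger_{(i,N]}$ into one-step factors, which is exactly what introduces the intermediate variables $x^{(k)}_j$ of $\sfX_N$ --- before invoking the same lower-triangularity you spot to reduce the $N\times N$ determinants to $k\times k$. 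Your argument stays in the path picture~\eqref{eq:pathQ2} throughout and replaces each Cauchy--Binet step by its geometric counterpart: a level-by-level factorization for the bottom section, and a single Lindstr\"om--Gessel--Viennot application for the middle-plus-top. The two are essentially dual --- Cauchy--Binet is the algebraic shadow of path concatenation --- and your route is more transparent geometrically, whereas the paper's algebraic route avoids the LGV verification you correctly flag as the hard part: the middle-plus-top paths end at distinct heights and the top block carries negative edge weights $-q$. Your vertical-tail augmentation handles the unequal heights; you should also note that the crossing-swap involution underlying LGV is a weight-preserving bijection and hence insensitive to the sign of the edge weights, so the negative weights are harmless. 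One small imprecision in your $x_0$-independence paragraph: for the configuration measure~\eqref{eq:DPP} itself (as opposed to its left-edge-fixed marginal), the diagonal entry carrying the $x_0$-dependence is $Q^\dagger_k(x_0-k,\,x^{(k)}_k)$ rather than $Q^\dagger_k(x_0-k,\,y'_k-k)$; the extracted factor $q_k^{-x_0}$ is the same in either case, so the cancellation against $\widehat Z^{\bm{p},\bm{q}}_{(r,t]}$ goes through unchanged.
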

\begin{proof} 
In a nutshell, this result is a consequence of the path representation of $\cQ_{(r,t]}(\bm{y},\bm{y}')$, as described in Proposition~\ref{prop:pathTASEP} (see in particular~\eqref{eq:pathQ2}), as well as the identification of the point process $\sfX_N$ as the `trace' of that path ensemble on $\{(x,i)\colon x\in \Z, \, 1\leq i\leq N \}$.
Starting from~\eqref{eq:Q_threeParts} and using~\eqref{eq:pathLambdaInv3}, \eqref{eq:pathR3} and~\eqref{eq:pathLambda3}, we obtain
\begin{align*}
\cQ_{(r,t]}(\bm{y},\bm{y}') 
&=\Bigg( \prod_{i=1}^N q_i^{y'_i-y_i}\Bigg)
\sum_{\lambda, \mu}  \Lambda^{-1} (\bm{y},\mu) \,
\cR_{(r,t]} (\mu,\lambda) \,
\Lambda(\lambda,\bm{y}')\\
&=\frac{\big( \prod_{i=1}^N q_i^{y'_i-y_i}\big)}{Z^{\bm{p},\bm{q}}_{(r,t]}}
\sum_{\lambda, \mu} \det\Big( (-1)^{N-j} Q_{(j,N]}^{-1}\big(\mu_i-i, y_j-j \big)  \Big)_{ i,j =1}^N\\
&\quad\, \cdot \det\Big( R_{(r,t]} (\mu_i-i,\lambda_j-j)\Big)_{ i,j =1}^N 
\det\Big( Q^\dagger_{(i,N]}(y'_i-i, \lambda_j-j)\Big)_{ i,j =1}^N,
\end{align*}
where the summations are over all $\lambda,\mu\in \sfW_N$ such that $\mu \subseteq\lambda$, $\mu\subseteq \bm{y}$ and $\bm{y}' \subseteq \lambda$.
Using the Cauchy--Binet identity to compute the sum over $\mu$, we have
\begin{align*}
&\cQ_{(r,t]}(\bm{y},\bm{y}') 
=\frac{\big( \prod_{i=1}^N q_i^{y'_i-y_i}\big)}{Z^{\bm{p},\bm{q}}_{(r,t]}} 
\sum_{\lambda \supseteq \bm{y}'}  
\det \Big( (-1)^{N-j} \sum_{z\in\Z} Q_{(j,N]}^{-1}\big(z, y_j-j \big) R_{(r,t]} (z,\lambda_i-i) \Big)_{ i,j =1}^N \\
&\hskip 8.95cm \cdot \det \Big( Q^\dagger_{(i,N]}(y_i'-i, \lambda_j-j)\Big)_{ i,j =1}^N \\
			&=
			\frac{\big( \prod_{i=1}^N q_i^{y'_i-y_i}\big)}{Z^{\bm{p},\bm{q}}_{(r,t]}} \sum_{\lambda \supseteq \bm{y}'}  
			\det \Big((-1)^{N-j} \Psi^{(N)}_{j} (\lambda_i-i) \Big)_{ i,j =1}^N
			\det \Big( Q^\dagger_{(i,N]}(y'_i-i, \lambda_j-j)\,\Big)_{ i,j =1}^N,
\end{align*}
where the latter equality follows from definition~\eqref{eq:Psi_def}.
We next multiply and divide by {$\prod_{i=1}^N q_i^{-x_0}$}, recall that {$\widehat Z^{\bm{p},\bm{q}}_{(r,t]}=\big(\prod_{i=1}^N q_i^{y_i-x_0} \big)Z^{\bm{p},\bm{q}}_{(r,t]}$} and absorb the factor {$\prod_{i=1}^N q_i^{y_i'-x_0}$} into the second determinant, thus obtaining
\begin{align*}
\cQ_{(r,t]}(\bm{y},\bm{y}') 
=\frac{ 1}{\widehat Z^{\bm{p},\bm{q}}_{(r,t]}} \sum_{\lambda \supseteq \bm{y}'}  
&\det \Big((-1)^{N-i} \Psi^{(N)}_{i} (\lambda_j-j) \Big)_{i,j=1}^N \\
&\cdot \det \Big({q_i^{y_i'-x_0}}  Q^\dagger_{(i,N]}(y'_i-i, \lambda_j-j)\Big)_{ i,j =1}^N.
\end{align*}
Using the facts that $Q^\dagger_{(0,i-1]}(x^{(0)}_i,x^{(0)}_i )=1$ {and $q_i^{y_i'-x_0}=Q^\dagger_i(x^{(0)}_i,y_i'-i)$}, we can rewrite
\[
\begin{split}
{q_i^{y_i'-x_0}} Q^\dagger_{(i,N]}(y'_i-i, \lambda_j-j) 
&=Q^\dagger_{(0,i-1]}(x^{(0)}_i,x^{(0)}_i ) \,
Q^\dagger_{(i-1,i]}(x^{(0)}_i,y'_i-i ) \,
Q^\dagger_{(i,N]}(y'_i-i, \lambda_j-j) \\
&=\sumthree{(z_0,\dots,z_N)\in\Z^{N+1}\colon}
{z_0=z_1=\dots=z_{i-1}=x^{(0)}_i,}
{z_i=y'_i-i,\;\; z_{N}= \lambda_j-j}
\prod_{k=1}^N Q^\dagger_{k}(z_{k-1},z_{k}) .
\end{split}
\]
Using {several times} the Cauchy--Binet identity, we obtain
\[
\begin{split}
\cQ_{(r,t]}(\bm{y},\bm{y}') 
=\frac{1}{\widehat Z^{\bm{p},\bm{q}}_{(r,t]}}
\sum_{\lambda \supseteq \bm{y}'} \;
\sumthree{\overline{\sfX}_N\colon x^{(i-1)}_i=\cdots =x^{(1)}_{i}=x^{(0)}_i,}
{x^{(i)}_i=y'_i-i,\;\; x^{(N)}_i= \lambda_i-i}
{\text{for } 1\leq i\leq N}
&\det \Big((-1)^{N-i} \Psi^{(N)}_{i} (\lambda_j-j) \Big)_{i,j=1}^N \\
&\cdot\prod_{k=1}^N\det \Big(Q^\dagger_{k}(x^{(k-1)}_i,x^{(k)}_j ) \Big)_{i,j=1}^N.
\end{split}
\]
Notice that the constraints $x^{(i-1)}_i=\cdots =x^{(1)}_{i}=x^{(0)}_i$, $1\leq i\leq N$, corresponds to `freezing' the points $x^{(i)}_j$, $1\leq i<j\leq N$, as in~\eqref{eq:triangArrayExt}.
Due to the inequalities~\eqref{eq:triangArrayExt} that $\overline{\sfX}_N$ satisfies, we have $x^{(k)}_{j}<x^{(k-1)}_i$ for $i<j$, hence $Q^\dagger_{k}(x^{(k-1)}_i,x^{(k)}_j)=0$ for $i<j$.
Furthermore, due the constraints $x^{(i-1)}_i=\cdots =x^{(1)}_{i}=x^{(0)}_i$, we have $Q^\dagger_{k}(x^{(k-1)}_i,x^{(k)}_i )=1$ for $i>k$.
Therefore, the matrix $\big(Q^\dagger_{k}(x^{(k-1)}_i,x^{(k)}_j ) \big)_{i,j=1}^N$ is lower triangular with diagonal elements $Q^\dagger_{k}(x^{(k-1)}_i,x^{(k)}_i )=1$ for $i>k$. This implies 
\begin{align*}
\det \Big(Q^\dagger_{k}(x^{(k-1)}_i,x^{(k)}_j ) \Big)_{i,j=1}^N = 
\det \Big(Q^\dagger_{k}(x^{(k-1)}_i,x^{(k)}_j ) \Big)_{i,j=1}^k,
\end{align*}
which leads to 
\[
\begin{split}
\cQ_{(r,t]}(\bm{y},\bm{y}') 
=\frac{1}{\widehat Z^{\bm{p},\bm{q}}_{(r,t]}}
\;\sumtwo{\sfX_N\colon x^{(i)}_i=y'_i-i}
{\text{for } 1\leq i\leq N}
&\det \Big((-1)^{N-i} \Psi^{(N)}_{i} (x^{(N)}_j) \Big)_{i,j=1}^N \\
&\cdot \prod_{k=1}^N\det \Big(Q^\dagger_{k}(x^{(k-1)}_i,x^{(k)}_j ) \Big)_{i,j=1}^k,
\end{split}
\]
with the convention that $x^{(k-1)}_k=x^{(0)}_k$ for $1\leq k\leq N$.
This completes the proof {of~\eqref{eq:Q_marginalDPP}.}

{To show that the determinantal measure does not in fact depend on $x_0$, notice first that the $k$-th row of the matrix $Q^\dagger_{k}(x^{(k-1)}_i,x^{(k)}_j )$ has a common factor $q^{-x_0}$, for all $1\leq k\leq N$.
When factoring these terms out of the determinants, we see a cancellation with the corresponding terms in the normalizing constant $\hat{Z}^{\bm{p},\bm{q}}_{(r,t]}$.
The resulting expression has no further dependence on $x_0$.
}
\end{proof}

For the analysis that will follow in the next sections, it will be convenient to re-express the determinantal measure~\eqref{eq:DPP} in terms $Q$-operators, which represent weights of paths moving strictly to the left, rather than $Q^\dagger$-operators, which represent weights of paths moving weakly to the right. 
Towards this, the main observation is the following equality of determinants, which will be a consequence of certain path constructions. 
	
\begin{proposition}\label{prop:detEquality}
Let $\{x_j^{(i)}\}_{1\leq j\leq i\leq N}$ be a triangular array of integers and let $\{x_j^{(j-1)}\}_{j=1}^N$ and $\{x_0^{(j-1)}\}_{j=1}^N$ be auxiliary integer variables.
Assume that $\{x_j^{(i)}\}_{1\leq j\leq i\leq N}$ satisfies 
\begin{align}\label{eq:interlacingStrict}
x_{j}^{(i)}<x_{j-1}^{(i-1)}\leq x_{j-1}^{(i)}\qquad \text{for all} \qquad 1\leq i\leq N,\;  {1\leq j\leq i+1}.
\end{align}
Then we have
\begin{equation}
\label{eq:detEquality}
\begin{split}
\prod_{k=1}^{N}\left({q_k^{x_{k}^{(k-1)}}}
\det\left(Q^\dagger_k(x_{i}^{(k-1)},x_j^{(k)})\right)_{i,j=1}^{k}\right) 
= \prod_{k=1}^{N}\left({q_k^{x_0^{(k-1)}}}
\det\left( Q_k(x_{i-1}^{(k-1)},x_j^{(k)})\right)_{i,j=1}^{k}\right),
\end{split}
\end{equation}
and both sides are nonzero.
\end{proposition}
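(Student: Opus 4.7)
The plan is to prove the identity factor by factor in $k$, i.e., to establish for each $1\le k\le N$ that
\begin{equation*}
q_k^{x_k^{(k-1)}}\det\bigl(Q^\dagger_k(x_i^{(k-1)},x_j^{(k)})\bigr)_{i,j=1}^{k} = q_k^{x_0^{(k-1)}}\det\bigl(Q_k(x_{i-1}^{(k-1)},x_j^{(k)})\bigr)_{i,j=1}^{k}.
\end{equation*}
The key observation driving the whole argument is that, under the strict interlacing \eqref{eq:interlacingStrict}, \emph{both} matrices above are triangular, so each determinant reduces to a product of its diagonal entries.

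To see this, note that \eqref{eq:interlacingStrict} forces the strict decrease $x_1^{(i)}>x_2^{(i)}>\cdots$ on every level, together with $x_i^{(k-1)}\le x_i^{(k)}$ and $x_j^{(k)}<x_{j-1}^{(k-1)}$. For the $Q^\dagger$-matrix, whenever $j>i$ one gets $x_j^{(k)}\le x_{i+1}^{(k)}<x_i^{(k-1)}$, so $Q_k^\dagger(x_i^{(k-1)},x_j^{(k)})=0$ by \eqref{eq:Qdagger}; hence it is lower triangular with diagonal entries $q_k^{x_i^{(k)}-x_i^{(k-1)}}$. For the $Q$-matrix, whenever $j<i$ (equivalently $j\le i-1$) one gets $x_j^{(k)}\ge x_j^{(k-1)}\ge\cdots\ge x_{i-1}^{(k-1)}$, so $Q_k(x_{i-1}^{(k-1)},x_j^{(k)})=0$ by \eqref{eq:Q}; hence it is upper triangular with diagonal entries $q_k^{x_i^{(k)}-x_{i-1}^{(k-1)}}$.

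Combining these evaluations with the prefactors, the exponent of $q_k$ on the LHS is $x_k^{(k-1)}+\sum_{i=1}^k(x_i^{(k)}-x_i^{(k-1)})$, while on the RHS it is $x_0^{(k-1)}+\sum_{i=1}^k(x_i^{(k)}-x_{i-1}^{(k-1)})$. Both telescoping sums collapse to the common quantity $\sum_{i=1}^k x_i^{(k)}-\sum_{i=1}^{k-1}x_i^{(k-1)}$, proving the factor-wise identity; nonvanishing is immediate since $q_k>0$ and every diagonal entry is a positive power of $q_k$. Conceptually, each determinant admits a Lindstr\"om--Gessel--Viennot interpretation, and the interlacing forces the identity permutation to be the unique contributor on either side, reflecting that there is a unique non-intersecting lattice-path system connecting the source and sink multisets in each of the weakly-right ($Q^\dagger$) and strictly-left ($Q$) step regimes.

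I do not expect any serious obstacle: the only point requiring genuine care is tracking the index shift $i\leftrightarrow i-1$ between the two sides when reading off the zero pattern from \eqref{eq:interlacingStrict}. Once the two triangularities are correctly identified, the rest is bookkeeping of exponents.
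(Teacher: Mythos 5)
Your proof is correct and is essentially the same argument as in the paper: the paper reads off the unique permutation contributing to each determinant via the Lindström--Gessel--Viennot interpretation (unique non-intersecting path system), which is exactly your triangularity observation, and the paper's concatenation-of-red-and-blue-paths step is your telescoping of exponents in a geometric disguise.
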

	
\begin{proof}
By the Lindstr\"om--Gessel--Viennot theorem, we may view $\det\big(Q^\dagger_k(x_{i}^{(k-1)},x_j^{(k)})\big)_{i,j=1}^{k}$ as the total weight of $k$ non-intersecting paths starting from $(x_k^{(k-1)},k-1),\dots,(x_{1}^{(k-1)},k-1)$ and ending at $(x_k^{(k)},k),\dots,(x_{1}^{(k)},k)$, with the first step upwards {(with weight $1$) and subsequent steps in the horizontal right direction (with weight $q_k$ at each step)}.
Note that such a non-intersecting path ensemble exists if and only if 
\begin{equation*}
x_{j}^{(k)}<x_{j-1}^{(k-1)}\leq x_{j-1}^{(k)}\qquad \text{for all}\qquad 2\leq j\leq k+1.
\end{equation*}
In such a case, the weight of the ensemble equals $\prod_{j=1}^{k}Q^\dagger_k(x^{(k-1)}_j,x^{(k)}_j)$.
Taking the product over $k=1,\dots,N$, we obtain the total weight of the non-intersecting paths illustrated in red in Figure~\ref{fig:redBluePaths}:
\begin{align*}
&\quad\,\prod_{k=1}^N\det\left(Q^\dagger_k(x_{i}^{(k-1)},x_j^{(k)})\right)_{i,j=1}^{k} \\
&= \begin{cases}
\prod_{k=1}^{N}\prod_{j=1}^{k}Q^\dagger_k(x_j^{(k-1)},x_j^{(k)})\quad &\text{if }	x_{j}^{(k)}<x_{j-1}^{(k-1)}\leq x_{j-1}^{(k)}\text{ for } 1\leq k\leq N,\ 2\leq j\leq k+1,\\
0 &\text{otherwise}.
\end{cases}
\end{align*}

{We now consider similar, `dual' paths, illustrated in blue in Figure~\ref{fig:redBluePaths}.
Recalling from~\eqref{eq:Q} that the $Q$-operators encode geometric jumps strictly to the left, by the Lindstr\"om--Gessel--Viennot theorem we view $\det\big( Q_k(x_{i-1}^{(k-1)},x_j^{(k)})\big)_{i,j=1}^{k}$ as the total weight of $k$ non-intersecting paths starting from $(x_{k-1}^{(k-1)},k-1),\dots,(x_{0}^{(k-1)},k-1)$ and ending at $(x_k^{(k)},k),\dots,(x_{1}^{(k)},k)$, with the first step diagonally up-left and subsequent steps in the horizontal left direction, with \emph{all} steps having weight $q_k$.
Reasoning as before, we obtain}
\begin{align*}
&\quad\,\prod_{k=1}^N\det\left( Q_k(x_{i-1}^{(k-1)},x_j^{(k)})\right)_{i,j=1}^{k} \\
&= \begin{cases}
\prod_{k=1}^{N}\prod_{j=1}^{k} Q_k(x_{j-1}^{(k-1)},x_j^{(k)})\quad &\text{if }	x_{j}^{(k)}<x_{j-1}^{(k-1)}\leq x_{j-1}^{(k)}\text{ for } 1\leq k\leq N,\ 1\leq j\leq k,\\
0 &\text{otherwise}.
\end{cases}
\end{align*}
{Thus, since by hypothesis the interlacing conditions~\eqref{eq:interlacingStrict} are satisfied, both sides of~\eqref{eq:detEquality} are nonzero.}
Moreover, we have 
\begin{equation}\label{eq:redBluePaths}
\begin{aligned}
&\prod_{k=1}^N\det\left(Q^\dagger_k(x_{i}^{(k-1)},x_j^{(k)})\right)_{i,j=1}^{k}= \sfw\sft(\text{red paths}),\\
&\prod_{k=1}^N\det\left( Q_k(x_{i-1}^{(k-1)},x_j^{(k)})\right)_{i,j=1}^{k}= \sfw\sft(\text{blue paths}).\\
\end{aligned}
\end{equation}
Moreover, up to inverting the weights of the blue paths, the total weight of red and blue paths simply equals the weight of all the horizontal paths from $(x_{k}^{(k-1)}, k)$ to $(x_0^{(k-1)},k)$, $k=1,\cdots,N$; in other words, we have
\begin{equation}\label{eq:concatenateBlueRedPaths}
\sfw\sft(\text{red paths})\cdot \sfw\sft(\text{blue paths})^{-1}= \prod_{k=1}^{N}q_k^{x_0^{(k-1)}-x^{(k-1)}_k}.
\end{equation}
Combining~\eqref{eq:concatenateBlueRedPaths} with~\eqref{eq:redBluePaths}, we readily arrive at~\eqref{eq:detEquality}.
\end{proof}

	\begin{figure}
		\begin{tikzpicture}[scale=0.45]
			\foreach \i in {0,1,...,5}{
				\draw (0, 2*\i  ) grid (32, 2*\i );
			}
			\foreach \i in {0,...,4}{
				\node[draw,circle,inner sep=2pt,fill] at (4-\i, 2*\i) {};}
			\node[draw,black, circle,inner sep=2pt,fill] at (4, 10) {}; 
			\node at (4,9) {$x_5^{(5)}$}; \node at (0,7) {$x_5^{(4)}$};
			\draw[ultra thick, red]  (0,10)--(4,10);
			\draw[ultra thick,red] (0,8)--(0,10);
			\node[draw,black, circle,inner sep=2pt,fill] at (6, 8) {}; \node[draw,black, circle,inner sep=2pt,fill] at (10, 10) {};
			\node at (10,9) {$x_4^{(5)}$}; \node at (6,7) {$x_4^{(4)}$}; \node at (1,5) {$x_4^{(3)}$};
			\draw[ultra thick, red]  (1,8)--(6,8);\draw[ultra thick, red](6,10)--(10,10);
			\draw[ultra thick, red] (1,6)--(1,8); \draw[ultra thick, red](6,8)--(6,10);
			\node[draw,black, circle,inner sep=2pt,fill] at (11, 6) {};\node[draw,black, circle,inner sep=2pt,fill] at (13, 8) {}; 
			\node[draw,black, circle,inner sep=2pt,fill] at (15, 10) {};
			\node at (15,9) {$x_3^{(5)}$}; \node at (13,7) {$x_3^{(4)}$}; \node at (11,5) {$x_3^{(3)}$};\node at (2,3) {$x_3^{(2)}$};
			\draw[ultra thick, red]  (2,6)--(11,6); \draw[ultra thick, red](11,8)--(13,8);\draw[ultra thick, red](13,10)--(15,10);
			\draw[ultra thick, red](2,4)--(2,6);\draw[ultra thick, red](11,6)--(11,8);\draw[ultra thick, red](13,8)--(13,10);
			\node[draw,black, circle,inner sep=2pt,fill] at (18, 4) {}; 
			\node[draw,black, circle,inner sep=2pt,fill] at (14, 4) {};
			\node[draw,black, circle,inner sep=2pt,fill] at (18, 8) {}; \node[draw,black, circle,inner sep=2pt,fill] at (16, 6) {};  
			\node[draw,black, circle,inner sep=2pt,fill] at (19, 10) {}; 
			\node at (19,9) {$x_2^{(5)}$}; \node at (18,7) {$x_2^{(4)}$}; \node at (16,5) {$x_2^{(3)}$};\node at (14,3) {$x_2^{(2)}$};\node at (3,1) {$x_2^{(1)}$};
			\draw[ultra thick, red]  (3,4)--(14,4);\draw[ultra thick, red]  (14,6)--(16,6);\draw[ultra thick, red]  (16,8)--(18,8);\draw[ultra thick, red]  (18,10)--(19,10);
			\draw[ultra thick,red] (3,2)--(3,4);\draw[ultra thick,red] (13.9,4)--(13.9,6);\draw[ultra thick,red] (16,6)--(16,8);\draw[ultra thick,red] (18,8)--(18,10);
			\node[draw,black, circle,inner sep=2pt,fill] at (16, 2) {};  \node[draw,black, circle,inner sep=2pt,fill] at (19, 10) {}; 
			\node[draw,black, circle,inner sep=2pt,fill] at (22, 8) {}; \node[draw,black, circle,inner sep=2pt,fill] at (19, 6) {}; 
			\node[draw,black, circle,inner sep=2pt,fill] at (22, 10) {};
			\node at (23,9) {$x_1^{(5)}$}; \node at (22,7) {$x_1^{(4)}$}; \node at (19,5) {$x_1^{(3)}$};\node at (18,3) {$x_1^{(2)}$};\node at (16,1) {$x_1^{(1)}$};\node at (4,-1) {$x_1^{(0)}$};
			\draw[ultra thick, red]  (4,2)--(16,2);\draw[ultra thick, red]  (16,4)--(18,4);\draw[ultra thick, red]  (18,6)--(19,6);\draw[ultra thick, red]  (19,8)--(22,8);
			\draw[ultra thick,red] (4,0)--(4,2);\draw[ultra thick,red] (16,2)--(16,4);\draw[ultra thick,red] (18,4)--(18,6);\draw[ultra thick,red] (19,6)--(19,8);\draw[ultra thick,red] (22,8)--(22,10);
			\node at (-1,10) {$q_5$}; \node at (-1,8) {$q_4$}; \node at (-1,6) {$q_3$}; \node at (-1,4) {$q_2$}; \node at (-1,2) {$q_1$};
			\node at (-1,0) {$0$};
			\foreach \i in {0,...,4}{
				\node[draw,circle,inner sep=2pt,fill] at (27+\i, 2*\i) {};}
			\node at (31,7) {$x_0^{(4)}$}; 
			\draw[ultra thick, blue]  (4,10)--(5,10);\draw[ultra thick, blue]  (10,10)--(12,10);\draw[ultra thick, blue]  (15,10)--(17,10); \draw[ultra thick, blue]  (19,10)--(21,10);\draw[ultra thick, blue]  (22,10)--(30,10);
			\draw[ultra thick, blue] (6,8)--(5,10);\draw[ultra thick, blue] (13,8)--(12,10);\draw[ultra thick, blue] (18,8)--(17,10);\draw[ultra thick, blue] (22,8)--(21,10);\draw[ultra thick,ultra thick, blue] (31,8)--(30,10);
			\node at (30,5) {$x_0^{(3)}$}; 
			\draw[ultra thick, blue]  (6,8)--(10,8);\draw[ultra thick, blue]  (13,8)--(15,8);\draw[ultra thick, blue]  (22,8)--(29,8);
			\draw[ultra thick, blue] (11,6)--(10,8);\draw[ultra thick, blue] (16,6)--(15,8);\draw[ultra thick, blue] (19,6)--(18,8);\draw[ultra thick, blue] (30,6)--(29,8);
			\node at (29,3) {$x_0^{(2)}$}; 
			\draw[ultra thick, blue]  (11,6)--(13,6);\draw[ultra thick, blue]  (16,6)--(17,6);\draw[ultra thick, blue]  (19,6)--(28,6);
			\draw[ultra thick, blue] (14,4)--(13,6);\draw[ultra thick, blue] (18,4)--(17,6);\draw[ultra thick, blue] (29,4)--(28,6);
			\node at (28,1) {$x_0^{(1)}$}; 
			\draw[ultra thick, blue]  (14,4)--(15,4);\draw[ultra thick, blue]  (18,4)--(27,4);
			\draw[ultra thick, blue] (16,2)--(15,4);\draw[ultra thick, blue] (28,2)--(27,4);
			\node at (27,-1) {$x_0^{(0)}$}; 
			\draw[ultra thick, blue]  (16,2)--(26,2);
			\draw[ultra thick, blue] (27,0)--(26,2);
		\end{tikzpicture}
		\caption{{Ensemble of non-intersecting red and blue paths used in the proof of Proposition~\ref{prop:detEquality}.}}
		\label{fig:redBluePaths}
	\end{figure}
	
The following corollary re-expresses the determinantal measure~\eqref{eq:DPP} in a form that will be more suitable to our purposes.
\begin{corollary}\label{coro:modifiedDPP}
The determinantal measure~\eqref{eq:DPP} is equal to 
		\begin{equation}\label{eq:modifiedDPP}
			\bbP\big( \sfX_N {\;\big|\; \bm{y}}\big)
=\frac{1}{\tilde Z^{\bm{p},\bm{q}}_{(r,t]}} 
			\left(\prod_{k=1}^{N} \det \Big(  Q_{k}(x^{(k-1)}_{i-1},x^{(k)}_j) \Big)_{i,j=1}^{k}\right)
			\det\Big( \Psi^{(N)}_{i} (x^{(N)}_j) \Big)_{i,j=1}^{N},
		\end{equation}
with 
		\begin{equation}\label{eq:modifiedDPP-norm}
			\tilde{Z}^{p,q}_{(r,t]}:=(-1)^{N(N-1)/2}\prod_{i=r+1}^{t}\prod_{j=1}^{N} (1+p_iq_j)\cdot \prod_{j=1}^N q_j^{y_j-j-x_0^{(j-1)}}.
		\end{equation}
	\end{corollary}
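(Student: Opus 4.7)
The corollary is a direct reformulation of the determinantal measure \eqref{eq:DPP} obtained by invoking Proposition \ref{prop:detEquality}, so the plan is short and essentially bookkeeping.

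First, I would extract the sign coming from the $\Psi$-determinant. Pulling the factor $(-1)^{N-i}$ out of the $i$-th row,
\[
\det\Big((-1)^{N-i}\,\Psi^{(N)}_{i}(x^{(N)}_j)\Big)_{i,j=1}^{N}
= \Bigg(\prod_{i=1}^{N}(-1)^{N-i}\Bigg)\det\Big(\Psi^{(N)}_{i}(x^{(N)}_j)\Big)_{i,j=1}^{N}
= (-1)^{N(N-1)/2}\det\Big(\Psi^{(N)}_{i}(x^{(N)}_j)\Big)_{i,j=1}^{N}.
\]

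Next, I would apply Proposition \ref{prop:detEquality} to the product of $Q^\dagger$-determinants. The configuration $\sfX_N$ satisfies $x^{(i+1)}_{j+1}<x^{(i)}_{j}\leq x^{(i+1)}_{j}$ by \eqref{eq:triangArray}, which is precisely the interlacing \eqref{eq:interlacingStrict} (after the relabeling $i\mapsto i-1$, $j\mapsto j-1$); hence both sides of \eqref{eq:detEquality} are nonzero and
\[
\prod_{k=1}^{N}\det\Big(Q^\dagger_{k}(x^{(k-1)}_i,x^{(k)}_j)\Big)_{i,j=1}^{k}
= \prod_{k=1}^{N} q_k^{x_0^{(k-1)}-x_k^{(k-1)}}\prod_{k=1}^{N}\det\Big(Q_{k}(x^{(k-1)}_{i-1},x^{(k)}_j)\Big)_{i,j=1}^{k}.
\]

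Then I would substitute the boundary convention $x_k^{(k-1)}=x^{(0)}_k=x_0-k$ that is in force in \eqref{eq:DPP}, giving
\[
\prod_{k=1}^{N} q_k^{x_0^{(k-1)}-x_k^{(k-1)}}
= \prod_{k=1}^{N} q_k^{x_0^{(k-1)}-x_0+k}.
\]
Plugging this and the sign into \eqref{eq:DPP} and recalling $\widehat Z^{\bm{p},\bm{q}}_{(r,t]}=Z^{\bm{p},\bm{q}}_{(r,t]}\prod_{i=1}^N q_i^{y_i-x_0}$, the prefactor in front of $\prod_k\det Q_k\cdot\det \Psi^{(N)}$ becomes
\[
\frac{(-1)^{N(N-1)/2}\prod_{k=1}^N q_k^{x_0^{(k-1)}-x_0+k}}{Z^{\bm{p},\bm{q}}_{(r,t]}\prod_{i=1}^N q_i^{y_i-x_0}}
= \frac{(-1)^{N(N-1)/2}}{Z^{\bm{p},\bm{q}}_{(r,t]}\prod_{j=1}^N q_j^{y_j-j-x_0^{(j-1)}}}
= \frac{1}{\tilde Z^{\bm{p},\bm{q}}_{(r,t]}},
\]
where the last equality uses that $(-1)^{N(N-1)/2}$ is its own inverse and matches the definition \eqref{eq:modifiedDPP-norm}. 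This yields \eqref{eq:modifiedDPP}.

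There is no serious obstacle: the nontrivial content has already been absorbed into Proposition \ref{prop:detEquality}. The only thing to keep track of is the careful accounting of the $q_k$-powers (which involves the diagonal convention $x^{(k-1)}_k=x_0-k$ on one side and the auxiliary variables $x_0^{(k-1)}$ on the other) and the sign $(-1)^{N(N-1)/2}$ extracted from the $\Psi$-determinant. It is worth remarking along the way that, although the individual determinants $\det(Q_k(x^{(k-1)}_{i-1},x^{(k)}_j))$ depend on the auxiliary parameters $x_0^{(k-1)}$, this dependence is exactly compensated by the factor $\prod_j q_j^{-x_0^{(j-1)}}$ in $\tilde Z^{\bm{p},\bm{q}}_{(r,t]}$, so the measure itself is independent of the choice of $x_0^{(k-1)}$, as already noted for \eqref{eq:DPP}.
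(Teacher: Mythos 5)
Your proof is correct and is precisely the bookkeeping the paper leaves implicit: Corollary~\ref{coro:modifiedDPP} is stated without a separate proof, as it follows directly from Proposition~\ref{prop:detEquality} once one pulls the sign $(-1)^{N(N-1)/2}$ out of the $\Psi$-determinant and tracks the $q_k$-powers via the convention $x^{(k-1)}_k = x^{(0)}_k = x_0 - k$ together with the factor $\prod_k q_k^{x_0^{(k-1)}-x_k^{(k-1)}}$ supplied by~\eqref{eq:detEquality}. You have carried this out accurately, and your closing remark on independence from the auxiliary variables $x_0^{(k-1)}$ matches the paper's own observation immediately after the corollary.
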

	
	The determinantal measure~\eqref{eq:modifiedDPP} is actually independent of the auxiliary variables $\{x_0^{(j-1)}\}_{j=1}^N$. 
	{Reasoning as in the proof of Proposition~\ref{prop:DPPmarginal}, the first row of the matrix $ \big(  Q_{k}(x^{(k-1)}_{i-1},x^{(k)}_j) \big)_{i,j=1}^{k}$ has a common factor $q_k^{-x_0^{(k-1)}}$, for all $1\leq k\leq N$.
	These terms, when factored out of the determinants,} cancel the corresponding terms in the normalizing constant $\tilde{Z}^{p,q}_{(r,t]}$, making the resulting expression independent of $\{x_0^{(j-1)}\}_{j=1}^N$.
	Thus, we may set these auxiliary variables to be $\infty$ and simply define 
	\begin{equation}\label{eq:Q_infinityLim}
		Q_k(x_0^{(k-1)},y):=
		q_k^{y}=\lim_{x\to\infty} q_k^{x}\cdot  Q_k(x,y), \quad y\in \mathbb{Z}.
	\end{equation}
Using these conventions and defining
	\begin{equation}\label{eq:modifiedDPP-norm2}
		\bar{Z}^{p,q}_{(r,t]} :=(-1)^{N(N-1)/2}\prod_{j=1}^{N}\prod_{i=r+1}^{t}(1+q_jp_i)\cdot \prod_{j=1}^N
		q_j^{y_j-j},
	\end{equation}
	the determinantal measure~\eqref{eq:modifiedDPP} may be written in the form
	\begin{equation}\label{eq:modifiedDPP2}
		\bbP\big( \sfX_N {\;\big|\; \bm{y}}\big)
		= \frac{1}{\bar Z^{\bm{p},\bm{q}}_{(r,t]}} 
		\prod_{k=1}^{N} \det \Big(  Q_{k}(x^{(k-1)}_{i-1},x^{(k)}_j) \Big)_{i,j=1}^{k}
		\cdot \det\Big( \Psi^{(N)}_{i} (x^{(N)}_j) \Big)_{i,j=1}^{N}.
	\end{equation}

\subsection{Correlation kernel and biorthogonal functions}

\label{subsec:modified-kernel}
{From now on we will make the additional assumption that $q_1<q_2<\cdots$, throughout this subsection and most of \S\ref{sec:RWformulas}, until when we remove it in the proof of Theorem~\ref{thm:corker_hitting} (see~\S\ref{sec:RWformulas}).
This assumption allows for a \emph{bona fide} composition of the local operators and makes all infinite sums in this subsection and in the next section well defined; it also justifies swapping sums and exchanging sums with contour integrals.
To better understand its need,} recall that in~\eqref{eq:Q_infinityLim} we defined $Q_k(x_0^{(k-1)},y):=q_k^y$, for virtual variable $x_0^{(k-1)}$ regarded as $\infty$. 
This convention might seemingly lead to issues when defining
\begin{equation}\label{eq:Qjk_virtual}
	Q_{[j,k]}(x_0^{(j-1)},x):=  
	Q_j\circ  Q_{[j+1,k]}(x_0^{(j-1)},x)=\sum_{y\in \mathbb{Z}} q_j^{y}\cdot  Q_{[j+1,k]}(y,x)
\end{equation}
for $k>j$: if, for example, $k=j+1$, the sum above equals $\sum_{y>x}q_j^{y}\cdot q_{j+1}^{x-y}$, which diverges for $q_{j+1}\leq q_j$.
However, for $q_1<q_2<\cdots$, \eqref{eq:Qjk_virtual} is well defined.
To see this, recall from~\eqref{eq:Q_integral} that
\[
Q_{[j+1,k]}(x,y)=\oint_{|z|=r} \frac{\mathrm{d}z}{2\pi \i} \frac{z^{y-x+k-j-1}}{\prod_{\ell=j+1}^{k}(q_\ell-z)},\qquad x,y\in \mathbb{Z},
\]
where $0<r<\min\{q_\ell\}_{\ell=j+1}^k$. 
To get a well-defined expression for $Q_{[j,k]}(x_0^{(j-1)},y)$, note that, since $q_j<q_k$ for all $k>j$, we can write
\begin{equation*}
	\sum_{y\in \mathbb{Z}} q_j^{y}\cdot  Q_{[j+1,k]}(y,x)= \sum_{y<0}q_j^{y}\cdot \oint_{|z|=r}\frac{\mathrm{d}z}{2\pi i} \frac{z^{x-y+k-j-1}}{\prod_{\ell=j+1}^k(q_\ell-z)}+\sum_{y\geq 0}q_j^{y}\cdot \oint_{|z|=r'}\frac{\mathrm{d}z}{2\pi i} \frac{z^{x-y+k-j-1}}{\prod_{\ell=j+1}^k(q_\ell-z)},
\end{equation*}
where $r,r'$ are chosen such that $0<r<q_j<r'<\min\{q_\ell\}_{\ell=j+1}^k$.
Both geometric series converge and computing them yields
\begin{align}\label{eq:Qjk_virtual_CI}
	\sum_{y\in \mathbb{Z}} q_j^{y}\cdot  Q_{[j+1,k]}(y,x)&=  \oint_{|z|=r}\frac{\mathrm{d}z}{2\pi i} \frac{z^{x+k-j}}{\prod_{\ell=j}^k(q_\ell-z)}-\oint_{|z|=r'}\frac{\mathrm{d}z}{2\pi i} \frac{z^{x+k-j}}{\prod_{\ell=j}^k(q_\ell-z)}\nonumber\\&= -\oint_{\gamma_{q_j}}\frac{\mathrm{d}z}{2\pi i} \frac{z^{x+k-j}}{\prod_{\ell=j}^k(q_\ell-z)},
\end{align}
where $\gamma_{q_j}$ is any simple closed contour enclosing $q_j$ as the only pole for the integrand.
This computation motivates the definition
\begin{equation}\label{eq:Qjk-convention}
	Q_{[j,k]}(x_0^{(j-1)},x)
	:=-\oint_{\gamma_{q_j}}\frac{\mathrm{d}z}{2\pi i} \frac{z^{x+k-j}}{\prod_{\ell=j}^k(q_\ell-z)}= \frac{q_j^{x+k-j}}{\prod_{\ell=j+1}^{k}(q_\ell-q_j)}
	\qquad \text{for } 1\leq j\leq k,
\end{equation}	 
where $\gamma_{q_j}$ is the contour defined above.
{We also set
	\begin{equation}\label{eq:Qjk-convention0}
		Q_{[j,k]}(x_0^{(j-1)},y):=0\qquad \text{for } 1\leq k<j.
	\end{equation}}
We now summarize some basic properties of $Q_{[j,k]}(x_0^{(j-1)},x)$, which we will use frequently: 
\begin{proposition}\label{prop:properties_virtual}
Assume that $q_1<q_2<\cdots$ and take $Q_{[j,k]}(x_0^{(j-1)},x)$ to be defined as in~\eqref{eq:Qjk-convention}-\eqref{eq:Qjk-convention0}, where $x_0^{(j-1)}$ are virtual variables regarded as $\infty$. 
	\begin{enumerate}
		\item For $k\geq 1$ and $y\in \mathbb{Z}$, we have 
		\begin{equation}\label{eq:properties_virtual1}
			Q_{[k,k]}(x_0^{(k-1)},y) =  q_k^{y}=Q_k(x_0^{(k-1)},y),
		\end{equation}
	which is consistent with \eqref{eq:Q_infinityLim}. 
		\item Given $j,k,n\geq 1$ and $y\in \mathbb{Z}$ with $j<k$ and $j<n$, we have 
		\begin{equation}\label{eq:properties_virtual2}
			Q_{[j,k]}\circ Q_{(k,n]}(x_0^{(j-1)},y)
			:= \sum_{x'\in \mathbb{Z}}Q_{[j,k]}(x_0^{(j-1)},x')\,Q_{(k,n]}(x',y)
			=Q_{[j,n]}(x_0^{(j-1)},y),
		\end{equation}
	where we allow the slight abuse of notation~\eqref{eq:Q_compositionExt} for $k\geq n$.
		\item {For any $j,k\geq 1$}, we have
		\begin{equation}\label{eq:properties_virtual3}
			 Q_{[j,k]}(x_0^{(j-1)},y) = \lim_{x\to \infty} q_j^{x}\cdot Q_{[j,k]}(x,y).
		\end{equation}
	\end{enumerate}
\end{proposition}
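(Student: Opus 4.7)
The plan is to treat the three items in turn, using the explicit closed form in~\eqref{eq:Qjk-convention} together with the contour integral representations from Proposition~\ref{prop:Toeplitz}. Throughout, the standing hypothesis $q_1<q_2<\cdots$ is what allows contours to be placed in the annulus $q_j<|z|<q_{j+1}$, and this is what makes the whole argument work.

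Part (i) is immediate: setting $k=j$ in~\eqref{eq:Qjk-convention} makes the product $\prod_{\ell=j+1}^{k}(q_\ell-q_j)$ empty, yielding $Q_{[k,k]}(x_0^{(k-1)},y)=q_k^y$, which matches~\eqref{eq:Q_infinityLim}. For part (iii) I would split into three cases. If $k=j$, then $q_j^xQ_j(x,y)=q_j^y\ind_{y<x}\to q_j^y$ as $x\to\infty$. If $k>j$, I would start from~\eqref{eq:Q_integral} with $|z|=r<q_j$ and rewrite
\[
q_j^xQ_{[j,k]}(x,y)=\oint_{|z|=r}\frac{\mathrm{d}z}{2\pi\mathrm{i}\,z}\,\Big(\frac{q_j}{z}\Big)^{\!x}\frac{z^{y+k-j+1}}{\prod_{\ell=j}^{k}(q_\ell-z)}.
\]
Deforming the contour to $|z|=r'$ with $q_j<r'<q_{j+1}$ picks up a residue at $z=q_j$ which is exactly the right-hand side of~\eqref{eq:Qjk-convention}; on the deformed contour the factor $(q_j/z)^x$ decays geometrically to $0$, uniformly in $z$, so the remaining integral vanishes in the limit. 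If $k<j$, the form of $Q_\ell^{-1}$ in~\eqref{eq:Qinverse} shows that $Q_{[j,k]}(x,y)$ is supported in $x\leq y\leq x+(j-1-k)$, so for any fixed $y$ the kernel vanishes for all sufficiently large $x$ and the limit is $0$, in agreement with~\eqref{eq:Qjk-convention0}.

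For part (ii), I would substitute the closed form~\eqref{eq:Qjk-convention} into the left-hand side, reducing the problem to computing $\sum_{x'\in\Z}q_j^{x'}Q_{(k,n]}(x',y)$. Inserting representation~\eqref{eq:Q_integral} for $Q_{(k,n]}$ and splitting the $x'$-sum at $x'=0$, I would use two contours $|w|=r'\in(q_j,q_{k+1})$ and $|w|=r''\in(0,q_j)$, on which the respective geometric series converge absolutely (here the strict inequality $q_j<q_{k+1}$ is essential). Evaluating the two geometric series produces, respectively, the factors $w/(w-q_j)$ and $w/(q_j-w)$; combining the two resulting integrals, their difference collapses to a single contour integral around $w=q_j$, whose residue equals $q_j^{y+n-k}/\prod_{\ell=k+1}^{n}(q_\ell-q_j)$. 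After reinstating the prefactor, this is exactly $Q_{[j,n]}(x_0^{(j-1)},y)$. The slight abuse of notation for $k\geq n$ in~\eqref{eq:Q_compositionExt} can be handled either by the operator identity $Q_k\circ Q_k^{-1}=I$ or by repeating the same residue analysis with representation~\eqref{eq:Qinverse_integral}.

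The main obstacle I anticipate is the careful choice of contours in parts (ii) and (iii) and the justification of the sum-integral interchanges and contour deformations. Everything rests on the existence of a thin annulus separating $q_j$ from $q_{j+1}$, which is guaranteed only by the strict inequalities $q_1<q_2<\cdots$; this is precisely what makes the computation~\eqref{eq:Qjk_virtual_CI}, and hence the definition~\eqref{eq:Qjk-convention} itself, well posed.
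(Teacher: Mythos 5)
Your proof is correct and follows essentially the same route as the paper's: part (i) by inspection of~\eqref{eq:Qjk-convention}, part (ii) by replaying the geometric-series/contour-splitting argument behind~\eqref{eq:Qjk_virtual_CI}, and part (iii) by deforming the contour in~\eqref{eq:Q_integral} across the pole at $z=q_j$ (for $j\leq k$) and noting finite support (for $j>k$). The only cosmetic difference is that you treat $k=j$ in part (iii) as a separate elementary case rather than folding it into the contour computation.
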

\begin{proof}
	\begin{enumerate}
		\item This follows immediately from~\eqref{eq:Qjk-convention}.
		\item To prove this, one can mimic~\eqref{eq:Qjk_virtual_CI}, using the integral representation~\eqref{eq:Q_integral} and splitting the sum into two geometric series with contours deformed in such a way that both series converge simultaneously.
		\item We first prove~\eqref{eq:properties_virtual3} for $1\leq j\leq k$, using again the integral representation~\eqref{eq:Q_integral}.
		Since $q_j<q_{j+1}<\cdots$, if we deform the contour $|z|=r$ to be a slightly larger circle $|z|=r'$ with $0<r<q_j<r'<q_{j+1}$, the only extra contribution of the integral comes from the residue at $z=q_j$.
		Hence, we have
		\begin{align*}
			Q_{[j,k]}(x,y)&= \oint_{|z|=r} \frac{\mathrm{d} z}{2\pi \mathrm{i}z} \cdot\frac{z^{y-x+k-j+1}}{\prod_{\ell=j}^{k}(q_\ell -z)}\\
			&= \oint_{|z|=r'} \frac{\mathrm{d} z}{2\pi \mathrm{i}z} \cdot\frac{z^{y-x+k-j+1}}{\prod_{\ell=j}^{k}(q_\ell -z)}+\frac{q_j^{y-x+k-j}}{\prod_{\ell=j+1}^{k}(q_\ell-q_j)}.
		\end{align*}
	Note that 
	\begin{equation*}
		\left\vert q_j^x\cdot \oint_{|z|=r'}\frac{\mathrm{d} z}{2\pi \mathrm{i}z} \cdot\frac{z^{y-x+k-j+1}}{\prod_{\ell=j}^{k}(q_\ell -z)}\right\vert \leq C\cdot \left(\frac{q_j}{r'}\right)^{x}\to 0,\qquad \text{as }x\to\infty,
	\end{equation*}
for some constant $C>0$.
Thus, by~\eqref{eq:Qjk-convention}, we have 
\begin{equation*}
	\lim_{x\to \infty} q_j^{x}\cdot Q_{[j,k]}(x,y)= \frac{q^{y+k-j}_j}{\prod_{\ell=j+1}^{k}(q_\ell-q_j)}=Q_{[j,k]}(x_0^{(j-1)},y).
\end{equation*}
{To prove~\eqref{eq:properties_virtual3} for $j>k$, notice that in this case, using our convention~\eqref{eq:Q_compositionExt}, we have
\begin{equation*}
	Q_{[j,k]}(x,y)= Q_{j-1}^{-1}\circ \cdots \circ Q^{-1}_{k+1}(x,y)=0,
\end{equation*}
whenever $x>y$.
By definition~\eqref{eq:Qjk-convention0}, it then follows that
\[
	\lim_{x\to \infty} q_j^{x}\cdot Q_{[j,k]}(x,y)= 0 =Q_{[j,k]}(x_0^{(j-1)},y),
\]
as desired.} \qedhere
	\end{enumerate}
\end{proof}

The following proposition is rather standard in the theory of determinantal point processes.
However, here we unveil an additional important structure, i.e.\ the triangularity of the correlation matrix $\sf M$, which can be seen from our non-intersecting paths formulation.

Define first, for $n< N$, the following generalisation of the functions $\Psi_k^{(N)}$ from~\eqref{eq:Psi_def}:
\begin{equation}\label{eq:modifiedPsi}
\Psi_k^{(n)}(x):=  Q_{(n,N]}\circ \Psi_{k}^{(N)}(x) = \sum_{z\in \Z} Q_{(n,N]}(x,z) \Psi_{k}^{(N)}(z).
\end{equation}
{We remark that the summation over $z$ is actually within the finite range $y_k-N\leq z < x$; see Figure~\ref{fig:paths} and recall the definition of the operator $Q$.}
	\begin{proposition}[Correlation kernel and Fredholm determinant]\label{prop:modifiedCorKer}
			Let the functions $\Psi^{(n)}_k$ be defined by~\eqref{eq:Psi_def} and~\eqref{eq:modifiedPsi}.
			Let $ Q_{[j,n]}(x_0^{(j-1)},y)$ be defined 
		{by~\eqref{eq:Qjk-convention}-\eqref{eq:Qjk-convention0}}.
		Under the assumption that $q_1<q_2<\cdots$, the determinantal point process~\eqref{eq:modifiedDPP} admits the Fredholm determinant representation
		\begin{align*}
			\bbE\Bigg[ \prod_{1\leq {j\leq i}\leq N} \big(1+g(i,x^{(i)}_j) \big)  \Bigg]
			=\det( I+g K)_{\ell^2(\{1,\dots,N\}\times \Z )}
		\end{align*}
		for any bounded test function $g\colon \{1,\dots,N\}\times \Z\to\R$.
The correlation kernel $K$ is given by
		\begin{align}\label{eq:modifiedCorKer}
			K(m,x;n,x')=- Q_{(m,n]}(x,x')\ind_{\{n>m\}} + 
			\sum_{i,j=1}^N \Psi^{(m)}_i(x)  \, \big[ \sfM^{-1}\big]_{i,j} \, Q_{[j,n]}(x_{0}^{(j-1)},x'),
		\end{align}
		where the matrix $\sfM$ is defined by
		\begin{align}\label{eq:Mmatrix}
			\sfM_{i,j}:= \sum_{z\in \Z}  Q_{[i,N]}(x_{0}^{(i-1)},z) \, \Psi^{(N)}_j(z)\qquad 
			\text{for}\quad 1\leq i,j\leq N.
		\end{align}
Furthermore, $\sfM$ is upper-triangular and invertible.
	\end{proposition}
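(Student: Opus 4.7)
The plan proceeds in two stages. First, I would recognize~\eqref{eq:modifiedDPP2} as an Eynard--Mehta type ensemble with ``virtual'' boundary data at level $0$, and apply the standard machinery for such determinantal point processes (as in \cite[\S2.1]{borodinFerrariPrahoferSasamoto07} or \cite{johansson03}) to produce the Fredholm determinant representation with a biorthogonal correlation kernel. Second, I would prove the structural claim that $\sfM$ is upper triangular, which simultaneously yields invertibility and sets up the analysis of~\S\ref{sec:RWformulas}.

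For the first stage, the key identifications are: the transition kernels between consecutive levels are the $Q_k$; the final-level functions are the $\Psi_i^{(N)}$; and the boundary functions at each level $k$ come from the virtual variables $x_0^{(k-1)}$, defined via~\eqref{eq:Qjk-convention}. Propagating these boundary functions through $Q$-operators gives the $Q_{[j,n]}(x_0^{(j-1)}, \cdot)$ appearing in~\eqref{eq:modifiedCorKer}, while propagating the $\Psi$-functions downward via $Q_{(m,N]}$ gives the $\Psi_i^{(m)}$ of~\eqref{eq:modifiedPsi}. Expanding the $(k\times k)$-determinants in~\eqref{eq:modifiedDPP2} and applying Cauchy--Binet level by level yields a kernel of the claimed form, with $\sfM$ arising as the natural pairing of the two families at level $N$; this is conditional on invertibility of $\sfM$.

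The main obstacle is upper triangularity, which I expect to be the hardest step and where the local-operator picture is essential. By Proposition~\ref{prop:properties_virtual}(iii),
\[
\sfM_{i,j} \;=\; \lim_{x \to \infty} q_i^{\,x} \,\big[ Q_{[i,N]} \circ R^{*}_{(r,t]} \circ Q^{-1}_{(j,N]} \big]\!(x,\, y_j - j).
\]
Since all operators involved are Toeplitz, the composition can be computed at the level of symbols via Proposition~\ref{prop:Toeplitz}; the reciprocal symbols of $Q_\ell$ and $Q_\ell^{-1}$ for $\ell > \max(i,j)$ cancel, leaving $Q_{[i,j]} \circ R^{*}_{(r,t]}$ under the convention~\eqref{eq:Q_compositionExt}. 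For $i>j$, this is a finite composition of \emph{local} operators $Q^{-1}_{j+1},\dots,Q^{-1}_{i-1},R^{*}_{r+1},\dots,R^{*}_t$, each of which shifts the argument by at most one site; equivalently, its symbol is a Laurent polynomial, so the kernel has bounded support in $y-x$. Hence, for $x$ sufficiently large it vanishes at $y = y_j - j$, forcing $\sfM_{i,j} = 0$.

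For the diagonal $i=j$, the same cancellation gives $Q_i \circ R^{*}_{(r,t]}$. A short direct calculation, using $R_{(r,t]}(y, y+k) = e_k(p_{r+1},\dots,p_t)$ for $0 \leq k \leq t-r$, then produces the explicit formula
\[
\sfM_{i,i} \;=\; q_i^{\,y_i - i} \prod_{s=r+1}^{t} (1 + p_s q_i),
\]
which is nonzero by the positivity of the parameters. Therefore $\sfM$ is upper triangular with nonzero diagonal entries, hence invertible, completing the plan.
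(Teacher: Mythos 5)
Your plan is correct. For the Fredholm determinant representation it matches the paper, which also just invokes \cite[Lemma 3.4]{borodinFerrariPrahoferSasamoto07} and \cite[Proposition 2.1]{johansson03}. For the upper triangularity of $\sfM$, you take a genuinely different route: you use the local-operator/path representation, starting from Proposition~\ref{prop:properties_virtual}(iii), cancelling Toeplitz symbols of $Q_\ell$ and $Q_\ell^{-1}$, and observing that for $i>j$ the remaining operator $R^*_{(r,t]}\circ Q^{-1}_{[j+1,i-1]}$ has a Laurent-polynomial symbol, hence a kernel with bounded support in $y-x$, which forces $\sfM_{i,j}=0$ for $x$ large. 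This is exactly the argument the paper relegates to Remark~\ref{rem:M_upperTriang} as a ``more intuitive, pathwise explanation,'' but makes no issue of rigor once one notes that $\Psi^{(N)}_j$ is compactly supported so the sum defining $\sfM_{i,j}$ is finite and the interchange of limit and sum is trivial. The paper's actual proof instead computes $\sfM_{i,j}$ via a double contour integral, collapses it to a single contour integral around $q_i$, and observes that the integrand is analytic at $q_i$ when $i>j$. Both approaches are valid. Yours is more conceptual and parallels the path picture that the paper itself emphasizes; the paper's contour computation is more mechanical but delivers the explicit formula for $\sfM_{i,j}$ for all $i\leq j$ in one stroke, which you recover only on the diagonal. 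Your diagonal formula $\sfM_{i,i}=q_i^{y_i-i}\prod_{s=r+1}^t(1+p_sq_i)$ agrees with the paper.
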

\begin{proof}
		Except for the stated properties of $\sfM$, Proposition~\ref{prop:modifiedCorKer} follows from~\cite[Lemma 3.4]{borodinFerrariPrahoferSasamoto07} and the accompanying remark, see also~\cite[Proposition 2.1]{johansson03}.
		We now check that, under our assumptions that $q_1<q_2<\cdots$, the matrix $\sf M$ is indeed upper-triangular with nonzero diagonal entries, and therefore invertible.
	 By~\eqref{eq:Mmatrix} and~\eqref{eq:Psi_def}, we have 
	\begin{align}
	\label{eq:Mmatrix2}
		\sfM_{i,j} 
		= \sum_{z\in \mathbb{Z}} Q_{[i,N]}(x_0^{(i-1)},z)\cdot\big( R^*_{(r,t]}\circ Q_{(j,N]}^{-1} \big) (z,{y_j-j})
	\end{align}
for $1\leq i,j\leq N$.
{Recalling formula~\eqref{eq:Qjk-convention} for $Q_{[i,N]}(x_0^{(i-1)},z)$ and expressing the Toeplitz operator $R^*_{(r,t]}\circ Q_{(j,N]}^{-1}$ as a contour integral in the usual way (see~\eqref{eq:QinvRstar_CI} in the next section for details)}, we see that 
	\begin{align*}
		\sfM_{i,j} &= \sum_{z\in \mathbb{Z}}\left(-\oint_{\gamma_{q_i}}\frac{\mathrm{d}\xi}{2\pi \mathrm{i}} \frac{\xi^{z+N-i}}{\prod_{\ell=i}^N(q_\ell-\xi)} \right) \oint_{|w|=r} \frac{\mathrm{d} w}{2\pi \mathrm{i}} w^{{y_j-z-N-1}}\prod_{\ell=j+1}^{N}(q_\ell-w)\prod_{\ell=r+1}^{t}(1+p_\ell w)\\
		&= -\oint_{\gamma_{q_i}}\frac{\mathrm{d}\xi}{2\pi \mathrm{i}}\oint_{|w|=r} \frac{\mathrm{d} w}{2\pi \mathrm{i}} \xi^{N-i}w^{{y_j-N-1}}\frac{\prod_{\ell=j+1}^{N}(q_\ell-w)\prod_{\ell=r+1}^{t}(1+p_\ell w)}{\prod_{\ell=i}^N(q_\ell-\xi)}\sum_{z\geq {y_j-N}}\left(\frac{\xi}{w}\right)^{z} \\
		&= -\oint_{\gamma_{q_i}}\frac{\mathrm{d}\xi}{2\pi \mathrm{i}}\oint_{|w|=r} \frac{\mathrm{d} w}{2\pi \mathrm{i}} \xi^{{y_j-i}}\cdot\frac{\prod_{\ell=j+1}^{N}(q_\ell-w)\prod_{\ell=r+1}^{t}(1+p_\ell w)}{\prod_{\ell=i}^N(q_\ell-\xi)}\cdot\frac{1}{w-\xi},
	\end{align*}
	where $r>\max\{|\xi|:\xi\in \gamma_{q_i}\}$, so that the sum over $z$ converges.
	Note that in the second equality we can restrict the sum to $z\geq {y_j-N}$, since the contour integral with respect to $w$ vanishes for $z<{y_j-N}$ due to analyticity.
	The only pole inside the $w$-contour is at $w=\xi$, and calculating its residue yields
	\begin{align*}
		\sfM_{i,j} 
		=-\oint_{\gamma_{q_i}}\frac{\mathrm{d}\xi}{2\pi \mathrm{i}}\xi^{{y_j-i}}\cdot\frac{\prod_{\ell=j+1}^{N}(q_\ell-\xi)\prod_{\ell=r+1}^{t}(1+p_\ell \xi)}{\prod_{\ell=i}^N(q_\ell-\xi)}.
	\end{align*}
	Note now that, for $i>j$, the integrand is analytic at {$q_i$}, hence the integral vanishes and $\sfM_{i,j}=0$.
	On the other hand, when $i=j$, by our assumptions on the parameters we have 
	\begin{align*}
		\sfM_{i,i}=-\oint_{\gamma_{q_i}}\frac{\mathrm{d}\xi}{2\pi \mathrm{i}}\frac{\xi^{{y_i-i}}}{q_i-\xi}\cdot\prod_{\ell=r+1}^{t}(1+p_\ell \xi)=q_i^{{y_i-i}}\cdot\prod_{\ell=r+1}^{t}(1+p_\ell q_i),
	\end{align*}
	which is nonzero.
	We conclude that the matrix $\sfM$ is upper-triangular and invertible.
\end{proof}

\begin{remark}
\label{rem:M_upperTriang}
		Here we provide a more intuitive, pathwise explanation of the fact that the matrix $\sfM$ is upper-triangular.  
		By~\eqref{eq:properties_virtual3}, for $i>j$ we have
		\begin{align*}
			\sfM_{i,j} 
			&:= \sum_{z\in \Z}  Q_{[i,N]}(x_0^{(i-1)},z) \, \Psi^{N}_j(z) \\
			&= \sum_{z\in \Z}\lim_{x\to \infty}q_i^{x}\cdot \big( Q_{i}\circ\cdots\circ Q_N\big)(x,z)\big( R^*_{(r,t]}\circ Q_N^{-1}\circ\cdots\circ Q_{j+1}^{-1} \big) (z,{y_j-j})\\
			&= \lim_{x\to \infty}q_i^{x}\cdot \big( R^*_{(r,t]}\circ Q^{-1}_{[j+1,i-1]} \big) (x,{y_j-j}),
		\end{align*}
		where in the last equality we used the fact that all the operators commute.
		Viewing the operators as associated to paths, $R^*$-operators take at most one step to the left at the time, whereas $Q^{-1}$-operators take at most one step to the right at the time.
		Thus, if $i>j$, for any $x$ sufficiently large, the point ${y_j-j}$ cannot be reached from $x$ by applying the operator $R_{(r,t]}^*\circ  Q^{-1}_{[j+1,i-1]}$, hence $R_{(r,t]}^*\circ  Q^{-1}_{[j+1,i-1]}(x,{y_j-j})=0$.
		This shows that $\sfM_{i,j}=0$ for $i>j$.
\end{remark}

	{We now derive a simplified expression for the correlation kernel in terms of biorthogonal functions}.
	Define
		\begin{align}\label{eq:Phi_def}
			\Phi^{(n)}_i(x):=\sum_{j=1}^n \big[ \sfM^{-1}\big]_{i,j} \, Q_{[j,n]}(x_{0}^{(j-1)},x)
			{=\sum_{j=i}^n \big[ \sfM^{-1}\big]_{i,j} \, Q_{[j,n]}(x_{0}^{(j-1)},x)}
			,\qquad x\in \Z,
		\end{align}
where the latter equality is due to the fact that $\sfM$ {(and hence $\sfM^{-1}$)} is upper-triangular.
	\begin{proposition}\label{prop:modifiedCorKer_simpl}
		The kernel $K$ in~\eqref{eq:modifiedCorKer} can be written as
		\begin{align}\label{eq:corKer_biorthogonal}
			K(m,x;n,x')=- Q_{(m,n]}(x,x')\ind_{\{n>m\}} + 
			\sum_{i=1}^n \Psi^{(m)}_i(x)   \, \Phi^{(n)}_i(x').
		\end{align}
		Moreover, for $1\leq i,j\leq n$ and $n=1,\dots,N$, the following biorthogonality relation holds:
		\begin{align}
			\sum_{x\in\Z} \Psi^{(n)}_i(x) \, \Phi^{(n)}_j(x) &= \delta_{i,j}, \label{eq:biorthogonalRelation}
		\end{align}
		where $\delta_{i,j}$ is the Kronecker delta.
	\end{proposition}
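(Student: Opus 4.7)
The plan is to derive both statements from the upper-triangularity of $\sfM$ established in Proposition~\ref{prop:modifiedCorKer} together with the semigroup identity for virtual-variable operators in Proposition~\ref{prop:properties_virtual}. For the identity~\eqref{eq:corKer_biorthogonal}, I would start from the kernel~\eqref{eq:modifiedCorKer} and show that the double sum $\sum_{i,j=1}^{N} \Psi^{(m)}_i(x)\, [\sfM^{-1}]_{i,j}\, Q_{[j,n]}(x_0^{(j-1)},x')$ actually truncates to $1 \leq i,j \leq n$. On the one hand, by the convention~\eqref{eq:Qjk-convention0} we have $Q_{[j,n]}(x_0^{(j-1)},x') = 0$ for $j > n$, killing those terms. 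On the other hand, since $\sfM$ is upper-triangular so is $\sfM^{-1}$, hence $[\sfM^{-1}]_{i,j} = 0$ whenever $i > j$; in particular the remaining terms with $j \leq n < i$ vanish. The double sum then collapses to $\sum_{i=1}^n \Psi^{(m)}_i(x) \sum_{j=1}^n [\sfM^{-1}]_{i,j}\, Q_{[j,n]}(x_0^{(j-1)},x')$, which equals $\sum_{i=1}^n \Psi^{(m)}_i(x)\, \Phi^{(n)}_i(x')$ by the definition~\eqref{eq:Phi_def}.

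For the biorthogonality~\eqref{eq:biorthogonalRelation}, I would unfold the definitions and compute the sum over $x$ directly. Using $\Phi^{(n)}_j(x) = \sum_{k=j}^n [\sfM^{-1}]_{j,k}\, Q_{[k,n]}(x_0^{(k-1)},x)$ and $\Psi^{(n)}_i(x) = (Q_{(n,N]} \circ \Psi^{(N)}_i)(x)$, an interchange of summations gives
\begin{equation*}
\sum_{x\in\Z} \Psi^{(n)}_i(x)\, \Phi^{(n)}_j(x) = \sum_{k=j}^n [\sfM^{-1}]_{j,k} \sum_{z\in\Z} \Psi^{(N)}_i(z)\, \bigl(Q_{[k,n]} \circ Q_{(n,N]}\bigr)(x_0^{(k-1)},z).
\end{equation*}
By the composition property~\eqref{eq:properties_virtual2} (with the convention $Q_{(n,n]} = I$ handling the boundary case $k = n$), the bracketed operator equals $Q_{[k,N]}(x_0^{(k-1)},z)$, so the sum over $z$ against $\Psi^{(N)}_i(z)$ is precisely $\sfM_{k,i}$ by~\eqref{eq:Mmatrix}.

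It then remains to evaluate $\sum_{k=j}^n [\sfM^{-1}]_{j,k}\, \sfM_{k,i}$. For $1 \leq i \leq n$ one can freely extend the range to $k = 1, \dots, N$: the terms with $k < j$ vanish because $[\sfM^{-1}]_{j,k} = 0$, and those with $k > n \geq i$ vanish because $\sfM_{k,i} = 0$, both by upper-triangularity. The extended sum is simply the $(j,i)$-entry of $\sfM^{-1}\sfM = I$, which equals $\delta_{i,j}$, as required. The only delicate point in the whole argument is justifying the Fubini-type interchange used to reduce the inner sums to $\sfM_{k,i}$; this is where the standing assumption $q_1 < q_2 < \cdots$ is essential, since it underpins both the absolute convergence of the relevant geometric series and the applicability of Proposition~\ref{prop:properties_virtual}.
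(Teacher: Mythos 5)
Your proof is correct and follows essentially the same argument as the paper: truncating the double sum in~\eqref{eq:modifiedCorKer} via upper-triangularity of $\sfM^{-1}$ together with $Q_{[j,n]}(x_0^{(j-1)},\cdot)=0$ for $j>n$, and then reducing the biorthogonality sum to $\sum_k [\sfM^{-1}]_{j,k}\sfM_{k,i}$ through the semigroup identity~\eqref{eq:properties_virtual2}. The only cosmetic difference is that you extend the $k$-range to $1,\dots,N$ at the end (using $\sfM_{k,i}=0$ for $k>n\geq i$), whereas the paper does it at the outset (using $Q_{[k,n]}=0$ for $k>n$), but the two are interchangeable and the substance is identical.
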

	\begin{proof}				
		Since $\sfM$ is upper-triangular and, by~\eqref{eq:Qjk-convention0}, $Q_{[j,n]}(x_0^{(j-1)},x) =0$ whenever $j>n$, the summation in~\eqref{eq:modifiedCorKer} over $i,j$ can be restricted to $1\leq i\leq j\leq n$, yielding
		\begin{align*}
			K(m,x;n,x')=-Q_{(m,n]}(x,x')\ind_{\{n>m\}} + 
			\sum_{i=1}^n \Psi^{(m)}_i(x)  \, \sum_{j=i}^n \big[ \sfM^{-1}\big]_{i,j} \, Q_{[j,n]}(x_0^{(j-1)},x').
		\end{align*}
		Therefore, \eqref{eq:corKer_biorthogonal} follows from definition~\eqref{eq:Phi_def}.
		
		To prove the biorthogonality relation~\eqref{eq:biorthogonalRelation}, recalling the definitions~\eqref{eq:modifiedPsi} and~\eqref{eq:Phi_def} of $\Psi^{(n)}_i(x)$ and $\Phi^{(n)}_j(x)$, we compute
		\begin{align*}
			\sum_{x\in\Z} \Psi^{(n)}_i(x) \, \Phi^{(n)}_j(x) 
			&=\sum_{x\in\Z} \left\{ \sum_{z\in\Z}   Q_{(n,N]}(x,z)\Psi^{(N)}_i(z)\right\}
			\left\{  \sum_{k=1}^N \big[\sfM^{-1}\big]_{j,k}  \,  Q_{[k,n]}(x_{0}^{(k-1)},x) \right\}  \\
			&= \sum_{k=1}^N \big[\sfM^{-1}\big]_{j,k} \sum_{z\in\Z} 
			\Psi^{(N)}_i(z) \sum_{x\in\Z } Q_{(n,N]}(x,z) \,  Q_{[k,n]}(x^{(k-1)}_0,x)\\
			&\stackrel{\eqref{eq:properties_virtual2}}{=} \sum_{k=1}^N \big[\sfM^{-1}\big]_{j,k} \sum_{z\in\Z}   \Psi^{(N)}_i(z) \,  Q_{[k,N]}(x^{(k-1)}_0,z)\\
			&\stackrel{\eqref{eq:Mmatrix}}{=} \sum_{k=1}^N \big[\sfM^{-1}\big]_{j,k} \cdot \sfM_{k,i} =\delta_{i,j} .
		\end{align*}
		{Note that the exchange of the summations over $x$ and $z$ is justified by absolute convergence, due to our working assumption $q_1<q_2<\cdots$; see the discussion at the beginning of this subsection.}
	\end{proof}

	\section{{Boundary value problem and random walk hitting times}}
	\label{sec:RWformulas}
	
	The goal of this section is to prove our main result, Theorem~\ref{thm:corker_hitting}.
	Towards this, in~\S\ref{subsec:FreholmPreliminaries} we establish some equivalent formulations of the contour integrals $\mathcal{S}$ and $\bar{\mathcal{S}}$ in terms of local operators.
	 Next, in~\S\ref{subsec:boundaryProblem}, we establish a relationship between the functions $\Phi^{(n)}_i(x)$, implicitly defined in~\eqref{eq:Phi_def}, and a terminal-boundary value problem for a discrete heat equation.
	In~\S\ref{subsec:RWhitting_preliminaries} and~\S\ref{subsec:kernel_hitting}, we express the solution to this problem in terms of random walk hitting probabilities.
	We will first do so under the additional assumption that $q_1<q_2<\cdots$, and then extend the result to general parameters through an analytic continuation argument, thus completing the proof of Theorem~\ref{thm:corker_hitting}.
	
	A boundary value problem of this kind and its connection to random walk hitting problems were first formulated in~\cite{matetskiQuastelRemenik21}.
	However, {our approach emphasizes} the role of local operators and their path interpretation; this might shed some additional light on the nature of the boundary value problem itself.
	{Furthermore}, our main technical tool, Proposition~\ref{prop:generalG}, requires a completely different proof compared to~\cite{matetskiQuastelRemenik21}.
	The reason is that, in the case of inhomogeneous rates, the kernel is not a polynomial in the spatial variables, while polynomiality is crucial in the proof of the random walk hitting formulas given by~\cite{matetskiQuastelRemenik21}.
	In~\S\ref{subsec:proofRWformula}, we will build a subtle induction argument to prove Proposition~\ref{prop:generalG}.
	
	As a preliminary notational remark, notice that, in~\S\ref{sec:dynamics}-\ref{sec:path-constr}, the vector $\bm{y}$ encoding the initial configuration satisfied the weak inequalities $y_1\geq \dots\geq y_N$, according to the original $\drsk$ dynamics of~\S\ref{sec:dynamics}.
	On the other hand, in this section, we will always assume that $\bm{y}$ satisfies the strict inequalities $y_1> \dots> y_N$, matching more closely the $\dtasep$ initial configuration and the notation of Theorem~\ref{thm:corker_hitting}.
	To translate formulas from the notation of previous sections, it will suffice to replace each $y_k$ with $y_k+k$ for all $1\leq k\leq N$.

	\subsection{Preliminaries towards the Fredholm determinant}
	\label{subsec:FreholmPreliminaries}

Moving towards the Fredholm determinant formula of Theorem~\ref{thm:corker_hitting}, here we prove an alternative representation of $\mathcal{S}$ and $\bar{\mathcal{S}}$ involving local operators.
This will give a natural connection between the path constructions of~\S\ref{sec:path-constr} and the random walk hitting times.

For $1\leq j\leq k$, we define the operators $\bar{Q}_{[j,k]}$ by
\begin{equation}\label{eq:Qbar}
\begin{split}
	\bar{Q}_{[j,k]}(x,y)
	&{:= Q_{[j,k]}(x,y)+(-1)^{k-j}	Q^\dagger_{[j,k]}(x,y)} \\
	&=\begin{cases}
		\left( Q_{j}\circ\cdots\circ Q_k\right)(x,y)\quad &x>y,\\
		(-1)^{k-j}\left(Q^{\dagger}_{j}\circ\cdots\circ Q^{\dagger}_k\right)(x,y)\quad &x\leq y,
	\end{cases}
\end{split}
\end{equation}
where $x,y\in \mathbb{Z}$.
It is then straightforward to check that 
\begin{equation*}
	Q_j^{-1}\circ\bar{Q}_{[j,k]}= \bar{Q}_{[j+1,k]}
	\qquad\text{and}\qquad
	\bar{Q}_{[j,k]}\circ  Q_{k}^{-1}= \bar{Q}_{[j,k-1]},
\end{equation*}
whenever $j<k$.
{However, when $j=k$, we have
\begin{equation}
\label{eq:Qbar_j=k}
\bar{Q}_{[k,k]}(x,y)
= Q_{k}(x,y)+Q^\dagger_{k}(x,y)
= q_k^{y-x}
\qquad \text{for all } x,y\in\Z,
\end{equation}
hence we deduce from~\eqref{eq:Qinverse} that}
\begin{equation*}
	Q_k^{-1}\circ\bar{Q}_{[k,k]}=\bar{Q}_{[k,k]}\circ  Q_{k}^{-1}= 0.
\end{equation*}
Note also that $\bar{Q}_{[j,k]}\circ\bar{Q}_{[k+1,\ell]}$ {may not be} well defined.
{The operator $\bar{Q}_{[j,k]}$ is Toeplitz, but its symbol is divergent on the whole complex plane.
However, recalling~\eqref{eq:Qdagger_integral} and~\eqref{eq:Q_integral}}, we can still express it as a contour integral:
\begin{equation}\label{eq:Qbar_CI}
	\begin{aligned}
		\bar{Q}_{[j,k]}(x,y)&=\oint_{|z|=r} \frac{\mathrm{d}z}{2\pi \mathrm{i}} \frac{z^{y-x+k-j}}{\prod_{\ell=j}^{k}(q_\ell-z)}-\oint_{|z|=R} \frac{\mathrm{d}z}{2\pi \mathrm{i}} \frac{z^{y-x+k-j}}{\prod_{\ell=j}^{k}(q_\ell-z)}\\
		&=-\oint_{\Gamma_{\bm{q}}} \frac{\mathrm{d}z}{2\pi \mathrm{i}} \frac{z^{y-x+k-j}}{\prod_{\ell=j}^{k}(q_\ell-z)},
	\end{aligned}
\end{equation}
where $0<r<q_\ell<R$ for all $j\leq \ell\leq k$, and $\Gamma_{\bm{q}}$ is any simple closed contour enclosing $q_\ell$ for all $j\leq \ell\leq k$ but {not 0}.

\begin{proposition}\label{prop:S-Sbar_localOp}
The operators $\mathcal{S}$ and $\bar{\mathcal{S}}$ defined in~\eqref{eq:Sintegral} and~\eqref{eq:Sbarintegral} can be rewritten as
	\begin{align}
		&\mathcal{S}_{[j,k],(r,t]}(x,y)= \left( {Q_{[j,k]}^{-1}}\circ R_{(r,t]}^*\right) (x,y),\label{eq:S_localOp}\\ 
		&\bar{\mathcal{S}}_{[j,k],(r,t]}(x,y)= \prod_{\ell=j}^{k}(q_\ell-1)\cdot\left(\bar{Q}_{[j,k]}\circ (R_{(r,t]}^*)^{-1}\right)(x,y), \label{eq:Sbar_localOp}
	\end{align}
{where the second formula holds under the hypothesis that $p_iq_\ell<1$ for all $r<i\leq t$ and $j\leq \ell\leq k$ (this condition is equivalent to the convergence of the right-hand side of~\eqref{eq:Sbar_localOp}).}
\end{proposition}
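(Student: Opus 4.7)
My plan is to reduce both identities to the symbol calculus of Proposition~\ref{prop:Toeplitz}. The relevant symbols are read off from~\eqref{eq:Qinverse_symbol} as $\phi_{Q_{[j,k]}^{-1}}(z)=\prod_{\ell=j}^{k}(q_\ell-z)/z^{k-j+1}$ on $|z|>0$, and computed directly from $R_i^*(x,y)=\ind_{y=x}+p_i\ind_{y=x-1}$ as the polynomial $\phi_{R_{(r,t]}^*}(z)=\prod_{i=r+1}^{t}(1+p_iz)$ on $\C$. For~\eqref{eq:S_localOp} the argument is then immediate: by Proposition~\ref{prop:Toeplitz}(ii) the composition $Q_{[j,k]}^{-1}\circ R_{(r,t]}^*$ has symbol equal to the product of the two above, analytic on $|z|>0$, and applying the inversion formula~\eqref{eq:invertingGF} on any small circle identified with $\Gamma_0$ reproduces exactly the integrand defining $\mathcal{S}_{[j,k],(r,t]}$.

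The identity~\eqref{eq:Sbar_localOp} is more delicate because $\bar Q_{[j,k]}$ has no globally convergent symbol. I would split $\bar Q_{[j,k]}=Q_{[j,k]}+(-1)^{k-j}Q_{[j,k]}^{\dagger}$ as in~\eqref{eq:Qbar}. Using~\eqref{eq:Q_symbol}--\eqref{eq:Qdagger_symbol}, the two pieces are Toeplitz with symbols $+z^{k-j+1}/\prod_\ell(q_\ell-z)$ on $|z|<\min_\ell q_\ell$ and $-z^{k-j+1}/\prod_\ell(q_\ell-z)$ on $|z|>\max_\ell q_\ell$, respectively. The symbol of $(R_{(r,t]}^*)^{-1}$ equals $1/\prod_i(1+p_iz)$ and is analytic on $|z|<1/\max_i p_i$; the hypothesis $p_iq_\ell<1$ is precisely what guarantees that both annuli intersect this disk non-trivially, allowing Proposition~\ref{prop:Toeplitz}(ii) to apply to each piece separately. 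The two resulting contour integrals share the common integrand $z^{y-x+k-j+1}/[2\pi iz\,\prod_\ell(q_\ell-z)\prod_i(1+p_iz)]$ but come with opposite signs, evaluated on circles $|z|=r<\min_\ell q_\ell$ and $|z|=R$ with $\max_\ell q_\ell<R<1/\max_i p_i$. Summing and deforming these two contours into each other, the difference picks up only the residues at the $q_\ell$'s: the pole at $0$ remains inside $|z|=r$, and the poles $-1/p_i$ remain outside $|z|=R$ thanks precisely to the hypothesis. This yields $-\oint_{\Gamma_{\bm q}}$ of the integrand, and multiplication by $\prod_\ell(q_\ell-1)$ recovers $\bar{\mathcal{S}}_{[j,k],(r,t]}$.

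No serious obstacle is anticipated; the work is essentially careful symbol bookkeeping, with the hypothesis $p_iq_\ell<1$ entering exactly to ensure that the outer contour for $\bar{\mathcal{S}}$ can be placed strictly inside $|z|<1/\max_i p_i$, so that the residues at $-1/p_i$ do not contaminate the calculation.
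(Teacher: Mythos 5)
Your proof is correct and follows essentially the same route as the paper's: for~\eqref{eq:S_localOp} the argument is verbatim the paper's symbol computation, and for~\eqref{eq:Sbar_localOp} both proofs reduce to the same contour picture, with the hypothesis $p_iq_\ell<1$ ensuring that the outer contour $|z|=R$ lies inside the disk of analyticity of $\phi_{(R^*_{(r,t]})^{-1}}$ so that, after deformation, only the poles at the $q_\ell$'s contribute. The only cosmetic difference is that you split $\bar Q_{[j,k]}$ into its $Q$ and $Q^\dagger$ pieces and apply Proposition~\ref{prop:Toeplitz} to each composition separately before merging the two circles into $\Gamma_{\bm q}$, whereas the paper first invokes the single-contour formula~\eqref{eq:Qbar_CI} for $\bar Q_{[j,k]}$ and then composes with $(R^*_{(r,t]})^{-1}$ via a double contour integral and a residue at $z=w$.
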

\begin{proof}
By~\eqref{eq:Qinverse_symbol} and Proposition~\ref{prop:Toeplitz}, the symbol of the (bi-infinite) Toeplitz operator $Q_{[j,k]}^{-1}$ is $\phi_{Q_{[j,k]}^{-1}}(z)=z^{-(k-j+1)} \prod_{\ell=j}^k (q_\ell - z)$, for $|z|>0$.
On the other hand, the Toeplitz operator $R_k^{*}$ has kernel $R_k^*(x,y)=\1_{y=x}+p_k\1_{y=x-1}$ and symbol {$\phi_{{R}_k^{*}}(z)=\sum_{x\in\Z}R_k^*(0,x)z^{-x}=1+p_k z$}.
Therefore, the composition $R_{(r,t]}^{*}$ is also a Toeplitz operator with symbol {$\phi_{{R}_{(r,t]}^{*}}(z)=\prod_{\ell=r+1}^{t}(1+p_\ell z)$ for every $z\in\C$.
It then follows from Proposition~\ref{prop:Toeplitz} that
\begin{equation}
\label{eq:QinvRstar_CI}
\left( Q_{[j,k]}^{-1}\circ R_{(r,t]}^*\right) (x,y)
= \oint_{|z|=r} \frac{\mathrm{d}z}{2\pi \mathrm{i}z} z^{y-x-k+j-1}
\prod_{\ell=j}^{k}(q_\ell -z)\cdot \prod_{\ell=r+1}^{t}(1+p_\ell z),
\end{equation}
for any $r>0$.
This, combined with definition~\eqref{eq:Sintegral}, proves~\eqref{eq:S_localOp}.}

Let us now prove~\eqref{eq:Sbar_localOp}.
By Proposition~\ref{prop:Toeplitz}, we may express the inverse of $R_{(r,t]}^*$ as
\begin{equation*}
	(R_{(r,t]}^*)^{-1}(x,y) = \oint_{|z|=R} \frac{\mathrm{d}z}{2\pi \mathrm{i}z} \frac{z^{y-x}}{\prod_{\ell=r+1}^{t}(1+p_\ell z)}, \qquad x,y\in \mathbb{Z},
\end{equation*}
where $0<R<\min\{p_\ell^{-1}\}$.
Using~\eqref{eq:Qbar_CI} for $\bar{Q}_{[j,k]}$ and noting from the above expression that $(R_{(r,t]}^*)^{-1}(x,y)=0$ if $x<y$, we can formally write
	\begin{align*}
		&\left(\bar{Q}_{[j,k]}\circ (R_{(r,t]}^*)^{-1}\right)(x,y)
		=\sum_{x'\geq y} \bar{Q}_{[j,k]}(x,x') \, (R_{(r,t]}^*)^{-1}(x',y) \\
		&= -\oint_{|z|=R}\frac{\mathrm{d}z}{2\pi \mathrm{i}z}\oint_{\Gamma_{\bm{q}}}\frac{\mathrm{d}w}{2\pi \mathrm{i}} \frac{z^y w^{-x+k-j}}{\prod_{\ell=j}^{k}(q_\ell-w)\cdot \prod_{\ell=r+1}^t (1+p_\ell z)}\sum_{x'\geq y} \left(\frac{w}{z}\right)^{x'}.
	\end{align*}
In order for $\big(\bar{Q}_{[j,k]}\circ (R_{(r,t]}^*)^{-1}\big)(x,y)$ to converge, i.e.\ for the geometric series $\sum_{x'\geq y} (w/z)^{x'}$ to converge, we need to take $\Gamma_{\bm{q}}$ to be inside the circle $|z|=R$; see Figure~\ref{fig: contourSbar} for an illustration of the contours.
This is only possible if $\max_{j\leq \ell\leq k}\{q_\ell\}<\min_{r<i\leq t}\{p_{i}^{-1}\}$, or equivalently $p_iq_\ell<1$ for all $r<i\leq t$ and $j\leq \ell\leq k$.
{Under this hypothesis, we evaluate the geometric series and compute the only residue of the $z$-contour at $z=w$, thus obtaining
\begin{align*}
\left(\bar{Q}_{[j,k]}\circ (R_{(r,t]}^*)^{-1}\right)(x,y)
&= -\oint_{|z|=R}\frac{\mathrm{d}z}{2\pi \mathrm{i}}\oint_{\Gamma_{\bm{q}}}\frac{\mathrm{d}w}{2\pi \mathrm{i}} \frac{w^{y-x+k-j}}{\prod_{\ell=j}^{k}(q_\ell-w)\cdot \prod_{\ell=r+1}^t (1+p_\ell z)}\frac{1}{z-w} \\
&= -\oint_{\Gamma_{\bm{q}}}\frac{\mathrm{d}w}{2\pi \mathrm{i}} \frac{w^{y-x+k-j}}{\prod_{\ell=j}^{k}(q_\ell-w) \cdot \prod_{\ell=r+1}^t (1+p_\ell w)}.
\end{align*}
This, combined with definition~\eqref{eq:Sbarintegral}, leads to~\eqref{eq:Sbar_localOp}.}
\end{proof}
	
	\begin{figure}
		\begin{tikzpicture}
			\draw[gray, thin] (-7,0)--(4,0);
			\draw[red,fill] (-6,0) circle (0.1cm);
			\draw[red,fill] (-5.5,0) circle (0.1cm);
			\draw[red,fill] (-5,0) circle (0.1cm);
			\draw[red,fill] (-4.5,0) circle (0.1cm);
			\draw[red,fill] (-4,0) circle (0.1cm);
			\draw[blue,fill] (1.5,0) circle (0.1cm);
			\draw[blue,fill] (2,0) circle (0.1cm);
			\draw[blue,fill] (2.5,0) circle (0.1cm);
			\draw[black,fill] (0,0) circle (0.1cm);
			\node[black,text width=0.3cm] at (0.1,-0.4) {\scriptsize $0$};
			\draw[black,thick] (0,0) ellipse (3cm);
			\node[black,text width=0.3cm] at (2,-1.2) {\scriptsize $\Gamma_{\bm{q}}$};
			\draw[black,thick] (2,0) ellipse (0.8cm and 0.8cm); 
			\node[black,text width=1cm] at (1.7,-3) {\scriptsize $|z|= R$};
		\end{tikzpicture}
		\caption{The contours $\Gamma_{\bm{q}}$ and $|z|=R$.
		Here, the red dots represent the points $\{-p_\ell^{-1}\}$ and the blue dots represent the points $\{q_\ell\}$.}
		\label{fig: contourSbar}
	\end{figure}

	\subsection{{Terminal-boundary value problem}}\label{subsec:boundaryProblem}
	Assume that $q_1<q_2<\cdots$. 
	\begin{proposition}\label{prop:modifiedBVP}
		Recall the definitions of $Q_k$ and its inverse in~\eqref{eq:Q}-\eqref{eq:Qinverse}.
		{Recall also that $\bm{y}=(y_1> \dots> y_N)$ is an arbitrary vector encoding the initial configuration of $\dtasep$.}
		For $n\leq N$ and $k\in\{0,\dots,n-1\}$, consider the terminal-boundary value problem
		\begin{empheq}[left=\empheqlbrace]{align}
			&h^n_k(\ell+1,x) = h_k^{n}(\ell,\cdot) \circ  Q^{-1}_{n-\ell} (x) &&x\in\Z, \; \ell <k,  \label{eq:BVP1} \\
			&h^n_k(\ell,y_{n-\ell}) = 0 &&\ell <k,  \label{eq:BVP2} \\
			&h^n_k(k,x) = q^{x-y_{n-k}}_{n-k} && x\in \Z. \label{eq:BVP3}
		\end{empheq}
		Then, for $0\leq \ell\leq k$,  the functions
		\begin{align}\label{eq:h_BVP}
			h^n_k(\ell,x):=\Phi^{(n)}_{n-k}\circ R^*_{(r,t]} \circ  Q^{-1}_{(n-\ell,n]} (x)
		\end{align}
		solve~\eqref{eq:BVP1}-\eqref{eq:BVP3}.
		In particular, we have that 
		\begin{equation}\label{modifiedPhi_hk}
			\Phi^{(n)}_{n-k}(x)= h^n_k(0,\cdot) \circ (R^*_{(r,t]})^{-1}(x).
		\end{equation}
	\end{proposition}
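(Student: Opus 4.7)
The plan is to directly verify the three relations~\eqref{eq:BVP1}--\eqref{eq:BVP3} (and the final identity~\eqref{modifiedPhi_hk}) by substituting the Ansatz~\eqref{eq:h_BVP}, using (a) the commutativity of the Toeplitz operators $Q_i^{\pm 1}$ and $R^*_i$, (b) the explicit expansion~\eqref{eq:Phi_def} of $\Phi^{(n)}_{n-k}$, and (c) the conventions~\eqref{eq:Qjk-convention}--\eqref{eq:Qjk-convention0} for the virtual variables $x_0^{(j-1)}$.

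The recursion~\eqref{eq:BVP1} is immediate: commutativity gives $Q^{-1}_{(n-\ell-1,n]}=Q^{-1}_{(n-\ell,n]}\circ Q^{-1}_{n-\ell}$, and substituting into~\eqref{eq:h_BVP} yields $h^n_k(\ell+1,x)=h^n_k(\ell,\cdot)\circ Q^{-1}_{n-\ell}(x)$. Likewise,~\eqref{modifiedPhi_hk} follows by setting $\ell=0$ so that $Q^{-1}_{(n,n]}=I$ and then composing on the right with $(R^*_{(r,t]})^{-1}$.

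For~\eqref{eq:BVP2} and~\eqref{eq:BVP3}, the key step is the rewriting
\begin{equation*}
h^n_k(\ell,x)=\sum_{j=n-k}^{n-\ell}[\sfM^{-1}]_{n-k,j}\,\bigl(Q_{[j,n-\ell]}\circ R^*_{(r,t]}\bigr)(x_0^{(j-1)},x),
\end{equation*}
obtained by inserting~\eqref{eq:Phi_def} and using the cancellation identity $Q_{[j,n]}\circ Q^{-1}_{(n-\ell,n]}=Q_{[j,n-\ell]}$ (in the extended sense of~\eqref{eq:Q_compositionExt}); terms with $j>n-\ell$ drop out by~\eqref{eq:Qjk-convention0}. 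The very same manipulation applied to the definition~\eqref{eq:Mmatrix} of $\sfM$, together with $\Psi^{(N)}_{n-\ell}(z)=R^*_{(r,t]}\circ Q^{-1}_{(n-\ell,N]}(z,y_{n-\ell})$, identifies $(Q_{[j,n-\ell]}\circ R^*_{(r,t]})(x_0^{(j-1)},y_{n-\ell})=\sfM_{j,n-\ell}$. Hence, for $\ell<k$,
\begin{equation*}
h^n_k(\ell,y_{n-\ell})=\sum_{j}[\sfM^{-1}]_{n-k,j}\,\sfM_{j,n-\ell}=(\sfM^{-1}\sfM)_{n-k,n-\ell}=\delta_{k,\ell}=0,
\end{equation*}
proving~\eqref{eq:BVP2}. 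For $\ell=k$, the convention~\eqref{eq:Qjk-convention0} forces only the $j=n-k$ term to survive, giving $h^n_k(k,x)=[\sfM^{-1}]_{n-k,n-k}\,(Q_{n-k}\circ R^*_{(r,t]})(x_0^{(n-k-1)},x)$; using~\eqref{eq:properties_virtual1} and the eigenfunction relation $q_{n-k}^{\cdot}\circ R^*_{(r,t]}=\bigl(\prod_{i=r+1}^{t}(1+p_iq_{n-k})\bigr)\,q_{n-k}^{\cdot}$, together with the value $\sfM_{n-k,n-k}=q_{n-k}^{y_{n-k}}\prod_{i=r+1}^{t}(1+p_iq_{n-k})$ (translated from the proof of Proposition~\ref{prop:modifiedCorKer} via the shift $y_i\leftrightarrow y_i+i$), the factors cancel and we obtain $h^n_k(k,x)=q_{n-k}^{x-y_{n-k}}$.

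The main technical point is not conceptual but notational: carefully distinguishing the cases $j\leq n-\ell$ and $j>n-\ell$ in the extended operator $Q_{[j,n-\ell]}$ and in its evaluation at the virtual variable $x_0^{(j-1)}$. Once this is in place, the vanishing boundary condition is simply the biorthogonality encoded in $\sfM^{-1}\sfM=I$, and the terminal value is fixed by the single surviving term.
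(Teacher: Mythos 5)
Your proposal is correct and follows essentially the same route as the paper. The verification of \eqref{eq:BVP1} and \eqref{modifiedPhi_hk} by commutativity is identical, and your treatment of \eqref{eq:BVP3} matches the paper's: expand $\Phi^{(n)}_{n-k}$ via \eqref{eq:Phi_def}, kill the $j>n-k$ terms, identify $Q_{[n-k,n]}\circ Q^{-1}_{(n-k,n]}=Q_{n-k}$, and cancel the Toeplitz eigenvalue against $\sfM_{n-k,n-k}$.

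The one genuine variation is in \eqref{eq:BVP2}. The paper invokes the biorthogonality relation \eqref{eq:biorthogonalRelation} as a black box, writing $h^n_k(\ell,y_{n-\ell})=\sum_x \Phi^{(n)}_{n-k}(x)\Psi^{(n)}_{n-\ell}(x)=0$. You instead expand $\Phi^{(n)}_{n-k}$ through $\sfM^{-1}$ directly and recognize $(Q_{[j,n-\ell]}\circ R^*_{(r,t]})(x_0^{(j-1)},y_{n-\ell})=\sfM_{j,n-\ell}$, so that the vanishing is literally $(\sfM^{-1}\sfM)_{n-k,n-\ell}=\delta_{k,\ell}$. This is the same mathematics (the biorthogonality lemma is proved precisely by reducing to $\sfM^{-1}\sfM=I$), but your version is arguably cleaner because the two boundary conditions \eqref{eq:BVP2} and \eqref{eq:BVP3} are then handled by a single uniform rewriting of $h^n_k(\ell,x)$, with \eqref{eq:BVP2} coming from off-diagonal cancellation and \eqref{eq:BVP3} from the surviving diagonal term. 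One small point to be precise about: the step discarding the terms with $j>n-\ell$, and the identification $Q_{[j,N]}\circ Q^{-1}_{(n-\ell,N]}=Q_{[j,n-\ell]}$ evaluated at the virtual variable $x_0^{(j-1)}$, really rest on the limit representation \eqref{eq:properties_virtual3} (as the paper makes explicit, and as is implicit in how the conventions \eqref{eq:Qjk-convention}--\eqref{eq:Qjk-convention0} are set up); simply citing the conventions is a slight shortcut, but the argument is sound.
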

	
	\begin{proof}
		It is clear that $h^n_k$ defined in~\eqref{eq:h_BVP} satisfies~\eqref{eq:BVP1}, since, for $\ell<k$,
		\[
		\begin{aligned}
			 h^n_k(\ell+1,x)
			&:=  \Phi^{(n)}_{n-k}\circ R^*_{(r,t]} \circ \big\{ Q^{-1}_n \circ \cdots \circ  Q^{-1}_{n-\ell}\big\} (x) \\
			&=  \Phi^{(n)}_{n-k}\circ R^*_{(r,t]} \circ \big\{ Q^{-1}_n \circ \cdots \circ  Q^{-1}_{n-\ell+1}\big\} \circ  Q^{-1}_{n-\ell} (x) \\
			&=   h^{n}_k(\ell,\cdot)  \circ  Q^{-1}_{n-\ell} (x).
		\end{aligned}
		\]
		The boundary condition~\eqref{eq:BVP2} follows from the biorthogonality property~\eqref{eq:biorthogonalRelation} and the definition of $\Psi^{(n)}_k$ in~\eqref{eq:Psi_def} and~\eqref{eq:modifiedPsi}: for $\ell<k$,
		\begin{equation*}
		\begin{aligned}
			 h^{n}_k(\ell,y_{n-\ell}) 
			&=\big(\Phi^{(n)}_{n-k}\circ R^*_{(r,t]} \circ  Q^{-1}_{(n-\ell, n]}\big)  (y_{n-\ell}) \\
			&=\sum_{x\in\Z} \Phi^{(n)}_{n-k}(x) \cdot \big(R^*_{(r,t]}\circ  Q^{-1}_{(n-\ell,n]}\big)(x,y_{n-\ell})
			=\sum_{x\in\Z} \Phi^{(n)}_{n-k}(x)\Psi^{(n)}_{n-\ell}(x)
			=0.
		\end{aligned}
		\end{equation*}
		
		Finally, we check the terminal condition~\eqref{eq:BVP3}.
		By the definition~\eqref{eq:Phi_def} of $\Phi^{(n)}_{n-k}$, we have
		\begin{equation}
		\label{eq:h_terminal_proof}
		\begin{aligned}
			 h^n_k(k,x)
			&= \Phi^{(n)}_{n-k}\circ R^*_{(r,t]} \circ  Q^{-1}_{(n-k,n]} (x) \\
			&= \sum_{j=n-k}^{{n}}  \big[\sfM^{-1}\big]_{n-k,j} 
			\sum_{y\in\Z} Q_{[j,n]} (x^{(j-1)}_0,y) \cdot {\left(R^*_{(r,t]} \circ Q^{-1}_{(n-k,n]}\right)(y,x).}
		\end{aligned}
		\end{equation}
		{Using~\eqref{eq:properties_virtual3} and recalling that the Toeplitz operators $Q$, $Q^{-1}$ and $R^*$ all commute with each other, we have, for $j>n-k$,
		\[
		\begin{aligned}
			&\sum_{y\in\Z} Q_{[j,n]} (x^{(j-1)}_0,y) \cdot \left(R^*_{(r,t]} \circ Q^{-1}_{(n-k,n]}\right)(y,x) \\
			= \; &\sum_{y\in\Z}
			\lim_{z\to\infty}q_j^{z} \cdot Q_{[j,n]}(z,y) \cdot \left(R^*_{(r,t]} \circ Q^{-1}_{(n-k,n]}\right)(y,x)
			= \lim_{z\to\infty}q_j^{z} \left(R^*_{(r,t]} \circ Q^{-1}_{[n-k+1,j-1]}\right)(z,x).
		\end{aligned}
		\]
		The same argument of Remark~\ref{rem:M_upperTriang} shows that the latter limit is zero for $j>n-k$.
		Therefore, the only surviving summand in~\eqref{eq:h_terminal_proof} is the one corresponding to $j=n-k$.}
		We thus have
		\begin{equation*}
		\begin{aligned}
			h^n_k(k,x)&=
			\big[ \sfM^{-1} \big]_{n-k,n-k}
			\sum_{y\in\Z} Q_{[n-k,n]} (x^{(n-k-1)}_{0},y) \cdot \left(R^*_{(r,t]} \circ Q^{-1}_{(n-k,n]}\right)(y,x)  \\
			&=\frac{\sum_{y\in\Z} Q_{n-k}(x^{(n-k-1)}_{0},y) \cdot R^*_{(r,t]}(y,x)}{\sum_{y\in\Z} Q_{n-k}(x^{(n-k-1)}_{0},y) 
				\cdot R^*_{(r,t]}(y,y_{n-k})} \\
			&= \frac{q_{n-k}^{x}\cdot\sum_{y\in\Z} q_{n-k}^{y-x} \cdot R^*_{(r,t]}(y,x)}
				{q_{n-k}^{y_{n-k}}\cdot\sum_{y\in\Z}q_{n-k}^{y-y_{n-k}} \cdot R^*_{(r,t]}(y,y_{n-k})}
		\end{aligned}
		\end{equation*}
		In the second equality we used again the commutativity of the operators, the fact that $\big[{\sf M}^{-1}\big]_{n-k,n-k}=({\sf M}_{n-k,n-k})^{-1}$ (as $\sf M$ is upper-triangular), and~\eqref{eq:Mmatrix2}.
		In the third equality we used~\eqref{eq:properties_virtual1}.
		Since $R_{(r,t]}^*$ is a Toeplitz operator, the sum
		\begin{align*}
			\sum_{y\in\Z} q_{n-k}^{y-x} \cdot R^*_{(r,t]}(y,x) 
			= \sum_{y\in \Z} q_{n-k}^{y-x} \cdot R^*_{(r,t]}(y-x,0)
			= \sum_{y\in \Z} q_{n-k}^{y} \cdot R^*_{(r,t]}(y,0)
		\end{align*}
		does not depend on $x$.
		Therefore, in the latest expression of $h^n_k(k,x)$, the two sums appearing in the numerator and denominator cancel each other, and we obtain $h^n_k(k,x)=q_{n-k}^{x-y_{n-k}}$, as desired.
	\end{proof}
	
		{Recalling~\eqref{eq:Qinverse}}, equation~\eqref{eq:BVP1} can be written equivalently as
	\begin{equation}\label{eq:recurrence}
		h_k^{n}(\ell+1,x)= -h_k^{n}(\ell,x)+q_{n-\ell}\cdot h_k^{n}(\ell,x-1).
	\end{equation}
	If we solve the latter recursively in $x$ for any fixed $\ell<k$, using the boundary condition~\eqref{eq:BVP2}, we obtain a cumulative (integral) expression of $h^n_k(\ell,\cdot)$ in terms of  $h^n_k(\ell+1,\cdot)$:
	\begin{equation}\label{eq:onestep}
		h_k^n(\ell,x)=
		\begin{dcases}
			\sum_{y=x+1}^{y_{n-\ell}} q_{n-\ell}^{x-y}\cdot h_k^{n}(\ell+1,y)\quad & x\leq y_{n-\ell},\\
			-\sum_{y=y_{n-\ell}+1}^{x} q_{n-\ell}^{x-y}\cdot h_k^{n}(\ell+1,y)\quad & x\geq y_{n-\ell},
		\end{dcases}
	\end{equation}
	with the convention that the above summations equal zero when their range is empty, i.e., when $x=y_{n-\ell}$.
	
	\begin{proposition}\label{prop:polynomiality}
		If the parameters $q_{\ell}$ are equal to some value $q>1$ for all $\ell$, then {every} solution to the initial-boundary value problem~\eqref{eq:BVP1}-\eqref{eq:BVP3} satisfies the property that $q^{-x} h^n_k(\ell, x)$ is a polynomial {of degree $k-\ell$ in the spatial variable $x$}.
	\end{proposition}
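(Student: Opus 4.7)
The plan is to reduce the problem to a statement about iterated discrete antidifferences of polynomials. Set $g(\ell,x) := q^{-x} h^n_k(\ell,x)$ and translate the terminal-boundary value problem \eqref{eq:BVP1}-\eqref{eq:BVP3} into equivalent conditions for $g$. The terminal condition becomes $g(k,x) = q^{-y_{n-k}}$ for all $x \in \Z$, which is (trivially) a polynomial of degree $0 = k-k$. The boundary condition becomes $g(\ell, y_{n-\ell}) = 0$ for $\ell < k$, and the recursion \eqref{eq:recurrence}, after dividing by $q^x$, simplifies to the backward difference identity
\begin{equation*}
g(\ell+1, x) = g(\ell, x-1) - g(\ell, x).
\end{equation*}

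Next, I would rewrite the cumulative form \eqref{eq:onestep} in terms of $g$. Multiplying out the factor $q^{x-y}$ (which becomes $1$ in the sum) yields, in a uniform way valid for all $x \in \Z$,
\begin{equation*}
g(\ell,x) \;=\; - \sum_{y = y_{n-\ell}+1}^{x} g(\ell+1, y),
\end{equation*}
with the standard convention that the sum is $-\sum_{y=x+1}^{y_{n-\ell}}$ whenever $x < y_{n-\ell}$ and is empty (hence zero) when $x = y_{n-\ell}$. In other words, $g(\ell, \cdot)$ is the unique discrete antiderivative of $-g(\ell+1, \cdot)$ vanishing at $y_{n-\ell}$.

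The proof is then a downward induction on $\ell$, starting from $\ell = k$ and decreasing to $\ell = 0$. The base case is immediate, since $g(k,x)$ is a nonzero constant. For the inductive step, I would use the standard fact that if $p$ is a polynomial of degree exactly $d$, then its discrete antiderivative $x \mapsto \sum_{y=a+1}^{x} p(y)$ is a polynomial of degree exactly $d+1$ in $x$ (this can be seen either by expressing $p$ in the falling factorial basis, where antidifference shifts indices upward by one, or by a direct degree count combined with Faulhaber-type formulas). Applied to $g(\ell+1, \cdot)$, which is polynomial of degree $k-\ell-1$ by induction, this yields that $g(\ell, \cdot)$ is polynomial of degree $k-\ell$, completing the induction.

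I do not anticipate a genuine obstacle here: the only point that needs a brief justification is the degree-preservation of leading coefficients under discrete antidifferentiation, ensuring that the degree is \emph{exactly} $k-\ell$ rather than merely at most $k-\ell$; this follows directly from the fact that the leading coefficient of the antidifference is proportional (via $1/(d+1)$) to the leading coefficient of the integrand. The homogeneity assumption $q_{\ell}\equiv q$ is used crucially to make the exponential factor $q^{-x}$ factor cleanly through the recursion; in the inhomogeneous case this simple reduction fails, which is exactly why Remark~\ref{rem:non-polynomiality} and the more intricate arguments of \S\ref{subsec:proofRWformula} are needed.
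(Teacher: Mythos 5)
Your proof is correct and takes essentially the same route as the paper: a downward induction on $\ell$ using the one-step recursion \eqref{eq:onestep}, starting from the constant at $\ell=k$. The paper keeps the argument terse (writes out $h_k^n(k-1,x)=q^{x-y_{n-k}}(y_{n-(k-1)}-x)$ and then says ``inductively''), while you make the key point clean and explicit by factoring out $q^{-x}$ and recognizing the one-step recursion as a discrete antidifference; this is a nice presentation of the same idea.
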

	\begin{proof}
		This is trivially true for $\ell=k$, as $h^n_k(k,x)={q^{x-y_{n-k}}}$ by the initial condition~\eqref{eq:BVP3}.
		Using the recursion~\eqref{eq:onestep}, we see that $h^n_k(k-1,x)=q^{x-y_{n-k}}\cdot (y_{n-(k-1)}-x)$.
		Using this and, again, the recursion~\eqref{eq:onestep} inductively, we arrive at the claim.
	\end{proof}
	
	\begin{remark}
	\label{rem:non-polynomiality}
		Using the same procedure as in the proof of Proposition~\ref{prop:polynomiality}, one can see that the solutions to the terminal-boundary value problem~\eqref{eq:BVP1}-\eqref{eq:BVP3} {do not have an analogous polynomial property} if the parameters $q_{\ell}$, $1\leq\ell\leq N$, are not identical.
		As mentioned at the beginning of~\S\ref{sec:RWformulas}, we will need to prove Theorem~\ref{thm:corker_hitting} using different methods, compared to~\cite{matetskiQuastelRemenik21}, due to the non-polynomiality of these solutions.
	\end{remark}
	
	\subsection{Random walk hitting {probabilities}}
	\label{subsec:RWhitting_preliminaries}
	
	{
	We will now present some preliminary computations that, although not strictly essential for our final purposes, will motivate and illustrate the representation of $h^n_k$ in terms of random walk hitting probabilities.
	}
	
	When $x\leq y_{n-\ell}$ we can iterate the recursion~\eqref{eq:onestep} to obtain, for $\ell<k$,
	\begin{equation}
	\label{eq:h_hitting}
	\begin{split}
		h^n_k(\ell,x)
		= \sum_{x<x_1\leq y_{n-\ell}} \, \sum_{x_1<x_2\leq y_{n-(\ell+1)}}&\cdots \sum_{x_{k-\ell-1}<x_{k-\ell}\leq y_{n-k+1}} \\
		&q_{n-\ell}^{x-x_1} \,q_{n-(\ell+1)}^{x_1-x_2} \cdots q_{n-(k-1)}^{x_{k-\ell-1}-x_{k-\ell}}  q_{n-k}^{x_{k-\ell}-y_{n-k}}
		\end{split}
	\end{equation}
	Let $S^*$ be an {$n$-step} geometric random walk moving strictly to the right ({more precisely, a sum of independent geometric random variables with inhomogeneous parameters $q_{n}^{-1},\dots,q_1^{-1}$}) with transition probability
	\begin{equation}
	\label{eq:transitionRWstar}
		\mathbb{P}(S^*_\ell=y \;|\; S^*_{\ell-1}=x):=(q_{n-\ell}-1)q_{n-\ell}^{x-y}\1_{y>x}, \qquad 0\leq \ell \leq n-1.
	\end{equation}
	
	 Then for $x\leq y_{n-\ell}$, the one step recurrence~\eqref{eq:onestep} can be written as 
	\begin{equation*}
		h_{k}^{n}(\ell,x) = \frac{1}{q_{n-\ell}-1}\cdot\mathbb{E}_{S^*_{\ell-1}=x}\left[h_{k}^n(\ell+1,S^*_\ell)\1_{\{S^*_\ell\leq y_{n-\ell}\}}\right].
	\end{equation*} 
	{Analogously, writing~\eqref{eq:h_hitting} in terms of the law of the random walk yields}
	\begin{equation}\label{eq:indicatorrep}
		h_{k}^{n}(\ell,x) = \left(\prod_{j=\ell}^{k}\frac{1}{q_{n-j}-1}\right)
		\mathbb{E}_{S^*_{\ell-1}=x}\left[q_{n-k}^{S^*_{k-1}-y_{n-k}}\1_{\{S^*_j\leq y_{n-j},\;\ell\leq j\leq k-1\}}\right].
	\end{equation} 
	For $0\leq \ell\leq k\leq n-1$, we define the hitting time 
	\begin{equation}\label{eq:hittingStar}
		\tau^*_{\ell,n}:= \min\{m\in \{\ell,\cdots,n-1\}: S^*_m> y_{n-m}\}.
	\end{equation}
	\begin{proposition}
	With the above notation, for $x\leq y_{n-\ell}$, we have 
	\begin{equation}\label{eq:hittingtimerep}
		h_{k}^n(\ell,x)= \frac{\mathbb{P}_{S^*_{\ell-1}=x}(\tau^*_{\ell,n}=k)}{\prod_{j=\ell}^{k}({q_{n-j}}-1)}.
	\end{equation}
	\end{proposition}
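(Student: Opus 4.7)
The plan is to take~\eqref{eq:indicatorrep} as the starting point and rewrite the expectation appearing there as the hitting probability $\mathbb{P}(\tau^*_{\ell,n}=k)$. Since both formulas already share the normalizing factor $\prod_{j=\ell}^{k}(q_{n-j}-1)^{-1}$, the task reduces to proving
\[
\mathbb{E}_{S^*_{\ell-1}=x}\bigl[q_{n-k}^{S^*_{k-1}-y_{n-k}}\1_{\{S^*_j\leq y_{n-j},\,\ell\leq j\leq k-1\}}\bigr] = \mathbb{P}_{S^*_{\ell-1}=x}(\tau^*_{\ell,n}=k).
\]
By definition~\eqref{eq:hittingStar}, the right-hand side is the probability of the intersection $\{S^*_j\leq y_{n-j},\,\ell\leq j\leq k-1\}\cap\{S^*_k>y_{n-k}\}$. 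I would therefore condition on $\sigma(S^*_\ell,\dots,S^*_{k-1})$ and invoke the Markov property, reducing the problem to identifying the weight $q_{n-k}^{S^*_{k-1}-y_{n-k}}$ with the conditional probability $\mathbb{P}(S^*_k>y_{n-k}\mid S^*_{k-1})$ on the event where the indicator is nonzero.

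The crucial observation is that on that event we in fact have $S^*_{k-1}\leq y_{n-k+1}<y_{n-k}$, where the strict inequality relies on the standing assumption $y_1>\cdots>y_N$. This means the constraint $S^*_k>y_{n-k}$ is automatically stronger than the constraint $S^*_k>S^*_{k-1}$ enforced by the strictly right-jumping walk, and hence the conditional probability reduces to a clean geometric series computation using the transition~\eqref{eq:transitionRWstar}: for any integer $z<y_{n-k}$,
\[
\mathbb{P}(S^*_k>y_{n-k}\mid S^*_{k-1}=z) = (q_{n-k}-1)\sum_{y>y_{n-k}}q_{n-k}^{z-y} = q_{n-k}^{z-y_{n-k}},
\]
with convergence guaranteed by $q_{n-k}>1$. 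Inserting this into the conditional expectation above yields exactly the left-hand side expectation, which completes the argument.

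I do not anticipate a real obstacle here: this is essentially a one-step Markov/tower-property computation whose only subtlety is the observation that the \emph{strict} decrease of $\bm{y}$ automatically yields $S^*_{k-1}<y_{n-k}$ on the relevant event. That observation is what makes the multiplicative weight $q_{n-k}^{S^*_{k-1}-y_{n-k}}$ coincide precisely with the crossing probability $\mathbb{P}(S^*_k>y_{n-k}\mid S^*_{k-1})$, thereby aligning the expectation in~\eqref{eq:indicatorrep} with $\mathbb{P}(\tau^*_{\ell,n}=k)$ and closing the identification.
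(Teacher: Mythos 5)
Your argument is correct and takes essentially the same route as the paper: both proofs start from~\eqref{eq:indicatorrep}, condition on $\sigma(S^*_\ell,\dots,S^*_{k-1})$, and identify $q_{n-k}^{S^*_{k-1}-y_{n-k}}$ as the one-step crossing probability $\mathbb{P}(S^*_k>y_{n-k}\mid S^*_{k-1})$. The paper reaches this via the telescoping decomposition $\mathbb{P}(\tau^*_{\ell,n}=k)=\mathbb{P}(\tau^*_{\ell,n}\geq k)-\mathbb{P}(\tau^*_{\ell,n}\geq k+1)$ and then computes $1-\mathbb{P}(S^*_k\leq y_{n-k}\mid\cdots)$, which is simply the complementary form of your direct computation. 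One small imprecision to flag: in the boundary case $\ell=k$ the indicator in~\eqref{eq:indicatorrep} is vacuous, so the only control on $S^*_{k-1}=x$ comes from the hypothesis $x\leq y_{n-\ell}=y_{n-k}$, and your claimed strict bound $S^*_{k-1}<y_{n-k}$ can fail when $x=y_{n-k}$. The geometric identity $\mathbb{P}(S^*_k>y_{n-k}\mid S^*_{k-1}=z)=q_{n-k}^{z-y_{n-k}}$ nevertheless holds for every $z\leq y_{n-k}$ (at $z=y_{n-k}$ both sides equal $1$, since the walk jumps strictly to the right), so the argument closes once the inequality in your statement is weakened from strict to weak.
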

	\begin{proof}
	Let us first write
	\begin{align*}
		\mathbb{P}_{S^*_{\ell-1}=x}(\tau^*_{\ell,n}=k)&= \mathbb{P}_{S^*_{\ell-1}=x}(\tau^*_{\ell,n}\geq k)-\mathbb{P}_{S^*_{\ell-1}=x}(\tau^*_{\ell,n}\geq k+1)\\
		&= \mathbb{E}_{S^*_{\ell-1}=x}\left[\1_{\{S^*_j\leq y_{n-j},\; \ell\leq j\leq k-1\}}\right]-\mathbb{E}_{S^*_{\ell-1}=x}\left[\1_{\{S^*_j\leq y_{n-j},\; \ell\leq j\leq k\}}\right].
	\end{align*}
	Here, the indicator $\1_{\{S^*_j\leq y_{n-j},\; \ell\leq j\leq k-1\}}$ is set to be $1$ when $\ell=k$.
	{Rearranging the terms and using standard properties of the conditional expectation, we obtain}
	\begin{align*}
		\mathbb{P}_{S^*_{\ell-1}=x}(\tau^*_{\ell,n}=k) =  \mathbb{E}_{S^*_{\ell-1}=x}\left[\left(1-\mathbb{E}\left[\1_{\{S^*_k\leq y_{n-k}\}} \;\middle|\; {S^*_\ell,\dots,S^*_{k-1}}\right]\right)\1_{\{S^*_j\leq y_{n-j},\;\ell\leq j\leq k-1\}}\right].
	\end{align*}
	Note that 
	\begin{align*}
		\mathbb{E}\left[\1_{\{S^*_k\leq y_{n-k}\}} \;\middle|\; {S^*_\ell,\dots,S^*_{k-1}}\right] = (q_{n-k}-1)\sum_{x=S^*_{k-1}+1}^{y_{n-k}}q_{n-k}^{S^*_{k-1}-x}= 1-q_{n-k}^{S^*_{k-1}-y_{n-k}},
	\end{align*}
	so that
	\begin{align*}
		\mathbb{P}_{S^*_{\ell-1}=x}(\tau^*_{\ell,n}=k) = \mathbb{E}_{S^*_{\ell-1}=x}\left[q_{n-k}^{S^*_{k-1}-y_{n-k}}\1_{\{S^*_j\leq y_{n-j},\; \ell\leq j\leq k-1\}}\right].
	\end{align*}
	Hence, \eqref{eq:hittingtimerep} follows from~\eqref{eq:indicatorrep}.
	\end{proof}

	\subsection{Hitting probability representation for the kernel}\label{subsec:kernel_hitting}
	We will now derive a more explicit representation of the Fredholm determinant kernel~\eqref{eq:corKer_biorthogonal}, which contains the implicit part $\sum_{i=1}^{n}\Psi_i^{(m)}(x)\Phi_i^{(n)}(x')$.
	Using~\eqref{eq:modifiedPsi}, \eqref{eq:Psi_def} and~\eqref{modifiedPhi_hk}, {we write
	\begin{equation*}
			\sum_{i=1}^n \Psi^{(m)}_i(x) \Phi^{(n)}_i(x')
			= \sum_{i=1}^{n} Q_{(m,N]}\circ R_{(r,t]}^*\circ Q_{(i,N]}^{-1}(x,y_i) \sum_{z_2\in\Z} h^n_{n-i}(0,z_2)\cdot(R^*_{(r,t]})^{-1}(z_2,x').
	\end{equation*}
Notice that, by the commutativity of Toeplitz operators,
		\begin{align*}
			Q_{(m,N]}\circ R_{(r,t]}^*\circ Q_{(i,N]}^{-1}(x,y_i) &= Q_{[1,N]}\circ Q_{[1,m]}^{-1}\circ R_{(r,t]}^*\circ Q_{[1,N]}^{-1}\circ Q_{[1,i]}(x,y_i)\\
			&= Q_{[1,m]}^{-1}\circ R_{(r,t]}^*\circ Q_{[1,i]}(x,y_i)\\
			&=  \sum_{z_1\in \mathbb{Z}}(Q_{[1,m]}^{-1}\circ R_{(r,t]}^*)(x,z_1)Q_{[1,i]}(z_1,y_i).
		\end{align*}
		Therefore, we obtain
		\begin{equation}\label{eq: kernel_decomp}
		\sum_{i=1}^n \Psi^{(m)}_i(x) \Phi^{(n)}_i(x')
		= \sum_{z_1,z_2\in\Z}\big(  Q_{[1,m]}^{-1}\circ R^*_{(r,t]} \big)(x,z_1) \cdot G(z_1,z_2) \cdot (R^*_{(r,t]})^{-1}(z_2,x'),
		\end{equation}
		where the function $G$ is defined as
	\begin{equation}\label{eq:Gfunction}
		G(z_1,z_2):= \sum_{i=1}^n Q_{[1,i]}(z_1,y_i) \cdot h_{n-i}^n(0,z_2).
	\end{equation}
}
	By the definition~\eqref{eq:Q} of the $Q$-operators and formula~\eqref{eq:h_hitting}, we have
	\begin{equation}\label{eq:Gfunction_sum}
		\begin{aligned}
			G(z_1,z_2)=\sum_{i=1}^n \, &\sum_{z_1:=x_0>x_1>x_2>\cdots >x_{i-1}>y_i} q_1^{x_1-z_1} \,q_2^{x_2-x_1}
			\cdots q_{i-1}^{x_{i-1}-x_{i-2}} \, q_{i}^{y_i-x_{i-1}} \\
			& \cdot  \sum_{\substack{ z_2=:x_n<x_{n-1}< \cdots <x_i \\
					x_{n-1}\leq y_n,\dots, x_i\leq y_{i+1}}}
			q_i^{x_{i}-y_i} \,q_{i+1}^{x_{i+1}-x_{i}}\cdots q_{n-1}^{x_{n-1}-x_{n-2}} \, q_n^{z_2-x_{n-1}}
		\end{aligned}
	\end{equation}
	for $z_2\leq y_n$.
	Up to a normalizing constant, formula~\eqref{eq:Gfunction_sum} precisely represents the probability that the geometric random walk $S^*$ defined in~\eqref{eq:transitionRWstar}, started from $z_2$, ends at $z_1$ after $n$ steps and enters the region (strictly) to the right of the curve $(y_i)_{1\leq i\leq n}$ in between; see Figure~\ref{fig:boundary} for an illustration.
	More precisely, for $z_2\leq y_n$,
	\begin{equation*}
		G(z_1,z_2)= \frac{\mathbb{P}_{S^*_{-1}=z_2}(S^*_{n-1}=z_1, \tau^*_{0,n}<n)}{\prod_{j=0}^{n-1}(q_{n-j}-1)},
	\end{equation*}
	where $\tau^*_{0,n}$ is defined in~\eqref{eq:hittingStar}.
	
		\begin{figure}
		\begin{tikzpicture}[scale=0.4]
			\foreach \i in {-1,...,10}{
				\draw (-2, \i  ) grid (26, \i );
			}
			\node[draw, red, circle, inner sep=2pt, fill] at (2,-1) {};
			\node[scale=0.8] at (2,-1.8) {$z_2=x_n$}; \node[scale=0.8] at (25.5,10.5) {$z_1=x_0$};
			\node[draw, red, circle, inner sep=2pt, fill] at (24.5,10) {};
			\node[draw, circle, inner sep=2pt, fill] at (7,0) {};
			\node[draw, circle, inner sep=2pt, fill] at (8.5,1) {};
			\node[draw, circle, inner sep=2pt, fill] at (10,2) {};
			\node[draw, circle, inner sep=2pt, fill] at (10.7,3) {};
			\node[draw, circle, inner sep=2pt, fill] at (11.5,4) {};
			\node[draw, circle, inner sep=2pt, fill] at (14,5) {};
			\node[draw, circle, inner sep=2pt, fill] at (16,6) {};
			\node[draw, circle, inner sep=2pt, fill] at (18,7) {};
			\node[draw, circle, inner sep=2pt, fill] at (19,8) {};
			\node[draw, circle, inner sep=2pt, fill] at (20,9) {};
			\node[draw, circle, inner sep=2pt, fill] at (21,10) {};
			\node[scale=0.8] at (7.7,0.4) {$y_n$};
			\node[scale=0.8] at (9.4,1.4) {$y_{n-1}$};
			\node[scale=0.8] at (21.7,10.4) {$y_{1}$};
			\draw[thick, red] (2,-1)--(2,0)--(3,0)--(3,1)--(4.5,1)--(4.5,2)--(5,2)--(6,2)--(6,3)--(8,3)--(8,4)--(9.5,4)--(9.5,5)--(14, 5);
			\draw[ultra thick, red, dashed] (14,5)--(15,5)--(15,6)--(17,6)--(17,7)--(17.5,7)--(17.5,8)--(18.5, 8)--(18.5,9)--(19.5,9)--
			(23, 9)--(23,10)--(24,10) ;
			\node[draw, red, circle, inner sep=1pt, fill] at (3,0) {};  \node[scale=0.8]  at (3.7,-0.5) {$x_{n-1}$}; 
			\node[draw, red, circle, inner sep=1pt, fill] at (4.5,1) {};  \node[scale=0.8]  at (5.2,0.5) {$x_{n-2}$}; 
			\node[draw, red, circle, inner sep=1pt, fill] at (6,2) {};  
			\node[draw, red, circle, inner sep=1pt, fill] at (8,3) {};  
			\node[draw, red, circle, inner sep=1pt, fill] at (9.5,4) {};  \node[scale=0.8]  at (9.7, 3.5) {$x_{i}$}; 
			\node[draw, red, circle, inner sep=1pt, fill] at (15, 5) {};  \node[scale=0.8]  at (15.7, 4.5) {$x_{i-1}$}; 
			\node[draw, red, circle, inner sep=1pt, fill] at (17, 6) {};  \node[scale=0.8]  at (17.7, 5.5) {$x_{i-2}$}; 
			\node[draw, red, circle, inner sep=1pt, fill] at (23, 9) {};  \node[scale=0.8]  at (23.5, 8.5) {$x_{1}$}; 
			\node[scale=0.8]  at (12.5, 4.5) {$y_{i+1}$};   \node[scale=0.8]  at (14.5, 5.5) {$y_{i}$};    \node[scale=0.8]  at (16, 6.6) {$y_{i-1}$}; 
		\end{tikzpicture}
		\caption{Path representation of the function $G(z_1,z_2)$ for $z_2\leq y_n$, as in~\eqref{eq:Gfunction}-\eqref{eq:Gfunction_sum}.
		For $1\leq i \leq n$, the solid red path depicts the path representation of $h^n_{n-i}(0,z_2)$ (see also~\eqref{eq:h_hitting}), while the dashed, red path depicts the path representation of  $(Q_1\circ\cdots\circ Q_i)(z_1,y_i)$. 
		 Concatenating the two paths gives a path of the geometric random walk $S^*$ going from $z_2$ to $z_1$, which enters the region (strictly) to the right of the (discrete) curve 	$(y_i)_{1\leq i\leq n}$ with a first entrance time at some time $1\leq i \leq n$.}
		 \label{fig:boundary}
	\end{figure}
	
	The above expression of $G(z_1,z_2)$ is only valid for $z_2\leq y_n$.
	{For this special case, one only needs to use the first recurrence relation in~\eqref{eq:onestep}.
	The situation for $z_2>y_n$ is more complicated, since in this case one also has to use the second recurrence relation in~\eqref{eq:onestep}.
	Consequently, the expression for $G(z_1,z_2)$ defined in~\eqref{eq:Gfunction} will no longer be a sum of positive terms, but a sum containing both positive and negative terms; accordingly, $G(z_1,z_2)$ will not be a probability (up to normalization).	
	Nevertheless, $G(z_1,z_2)$ still possesses a probabilistic interpretation.}
	In order to extend the probabilistic interpretation of $G(z_1,z_2)$ to all $z_1,z_2\in \mathbb{Z}$, we need to introduce additional notation.
	For $1\leq j\leq k\leq n$, recall the operators $\bar{Q}_{[j,k]}$ introduced in~\eqref{eq:Qbar} and, for convenience, define a renormalized version of them as follows:
	\begin{equation}
	\label{eq:Qhat}
		\hat{Q}_{[j,k]}(x,y):= \Bigg(\prod_{\ell=j}^{k}(q_\ell-1)\Bigg) \bar{Q}_{[j,k]}(x,y).
	\end{equation}
	We now express $G(z_1,z_2)$ in terms of a random walk hitting problem involving the $\hat{Q}$-operators.
	Let $S$ be a geometric random walk {moving strictly to the left} with transition probabilities
	\begin{equation}\label{eq:transitionRW}
		\mathbb{P}(S_\ell=y \;|\; S_{\ell-1}=x):= (q_\ell-1)q_{\ell}^{y-x}\1_{y<x}
		={(q_\ell-1) Q_\ell(x,y)},\qquad \text{for }1\leq \ell\leq n.
	\end{equation}
	Define the hitting time 
	\begin{equation}\label{eq:hittingTime}
		\tau:=\min\{m\in \{0,\dots,n\}\colon S_m>y_{m+1}\},
	\end{equation} where $y_{n+1}:=-\infty$. Then we have 
	\begin{proposition}\label{prop:generalG}
		For any $z_1,z_2\in \mathbb{Z}$,
		\begin{equation}\label{eq:Gfunction_general}
			G(z_1,z_2)=\frac{\mathbb{E}_{S_0=z_1}\left[\hat Q_{[\tau+1,n]}(S_\tau,z_2)\1_{\tau<n}\right]}{\prod_{\ell=1}^{n}(q_\ell-1)}.
		\end{equation} 
	\end{proposition}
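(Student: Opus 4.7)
The plan is to proceed by induction on $n$, the number of particles. The base case $n=1$ reduces to a direct verification: from the definitions $G(z_1,z_2) = Q_1(z_1,y_1)h^1_0(0,z_2) = q_1^{z_2-z_1}\1_{z_1>y_1}$, and $\tau<1$ happens precisely when $z_1>y_1$, in which case $\hat Q_{[1,1]}(z_1,z_2)/(q_1-1)=q_1^{z_2-z_1}$.

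For the inductive step, I would first derive a recurrence expressing $G^{(n)}(z_1,z_2)$ in terms of $G^{(n-1)}(z_1,\cdot)$. The crucial observation is the shift identity $h^n_{n-i}(\ell+1,y)=h^{n-1}_{n-1-i}(\ell,y)$ for $\ell\geq 0$ (where the $(n-1)$-system has parameters $q_1,\dots,q_{n-1}$ and boundary $y_1,\dots,y_{n-1}$), verified by checking that both sides satisfy the same recurrence~\eqref{eq:recurrence}, boundary conditions, and terminal datum. Applying the $\ell=0$ step of~\eqref{eq:onestep} to $h^n_{n-i}(0,z_2)$ defines a signed operator $U_n$ combining both branches of~\eqref{eq:onestep}; together with the shift identity for $i\leq n-1$ and the terminal condition $h^n_0(0,z_2)=q_n^{z_2-y_n}$ for $i=n$, this yields
\begin{equation*}
G^{(n)}(z_1,z_2) = U_n\bigl[G^{(n-1)}(z_1,\cdot)\bigr](z_2) + Q_{[1,n]}(z_1,y_n)\, q_n^{z_2-y_n}.
\end{equation*}

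Next I would establish the algebraic identity, valid for $s>y_n$ and any $z_2\in\Z$,
\begin{equation*}
\bar Q_{[j,n]}(s,z_2) = U_n\bigl[\bar Q_{[j,n-1]}(s,\cdot)\bigr](z_2) + Q_{[j,n]}(s,y_n)\, q_n^{z_2-y_n}.
\end{equation*}
The strategy is to apply the operation $f\mapsto f\circ Q_n^{-1}$ (in the $z_2$ variable) to both sides; this annihilates the last term (since $q_n^{\cdot-y_n}\circ Q_n^{-1}=0$) and produces $\bar Q_{[j,n-1]}(s,\cdot)$ on both sides (using $\bar Q_{[j,n]}\circ Q_n^{-1}=\bar Q_{[j,n-1]}$, and the fact that $U_n$ inverts $\cdot\circ Q_n^{-1}$ with boundary value zero at $y_n$). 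Hence the difference between the two sides lies in $\mathrm{span}\{q_n^{z_2}\}$; evaluating at $z_2=y_n$, where both sides equal $Q_{[j,n]}(s,y_n)$ (using $s>y_n$ for the left-hand side), pins down the constant to zero.

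Finally, I would substitute the induction hypothesis into the recurrence and compare with the target expression $\text{RHS}^n$. Decomposing $G^{(n-1)}$ over its hitting time $\tau'$ in the $(n-1)$-system (with the convention $y_n:=-\infty$ there), for each $m\leq n-2$ the algebraic identity applies (since $S_m>y_{m+1}>y_n$) and converts the $U_n$-image into $\bar Q_{[m+1,n]}(S_m,z_2)$ minus the boundary contribution $Q_{[m+1,n]}(S_m,y_n)\,q_n^{z_2-y_n}$. Matching this against $\text{RHS}^n$ decomposed over the $n$-system hitting time $\tau_n$, and using that $\{\tau'=m\}=\{\tau_n=m\}$ for $m\leq n-2$ while $\{\tau'=n-1\}=\{\tau_n=n-1\}\sqcup\{\tau_n=n\}$, the difference $\text{RHS}^n - U_n G^{(n-1)}$ reduces to $q_n^{z_2-y_n}\cdot A$ with
\begin{equation*}
A = \sum_{m=0}^{n-1}\frac{\E_{S_0=z_1}\bigl[Q_{[m+1,n]}(S_m,y_n)\1_{\tau'=m}\bigr]}{\prod_{\ell=1}^{m}(q_\ell-1)}.
\end{equation*}
The hard part will be recognizing that $A$ equals $Q_{[1,n]}(z_1,y_n)$: every strictly decreasing walk $z_1=s_0>s_1>\cdots>s_n=y_n$ satisfies $s_{n-1}>y_n$ and thus has a well-defined $\tau'\in\{0,\dots,n-1\}$, so summing over $m$ partitions exactly this set of walks. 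This telescoping, together with the subtle inclusion-exclusion cancellation between the signed boundary terms produced by the algebraic identity and the shifted $\tau_n=n-1$ contribution, is the combinatorial heart of the argument.
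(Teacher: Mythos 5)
Your proof is correct, and it takes a genuinely different route from the paper's. The paper introduces a two-parameter family $G^{(n)}_{j,k}$ (with $n$ fixed) and runs a backward double induction over $(j,k)$, verifying the generalized identity~\eqref{eq:Ginduction} by splitting into the cases $z_1\leq y_{m+1}$ versus $z_1>y_{m+1}$, and in the second case further subdividing according to the position of $z_2$ relative to $y_{n-\ell}$ and $z_1$ (Cases~2.1--2.3). You instead induct on $n$ itself, which forces you to relate the $n$-particle to the $(n-1)$-particle system: the key new ingredient is the shift identity $h^n_{n-i}(\ell+1,\cdot)=h^{n-1}_{n-1-i}(\ell,\cdot)$, which holds because both sides satisfy the same recursion~\eqref{eq:BVP1}, boundary~\eqref{eq:BVP2} and terminal~\eqref{eq:BVP3} conditions after reindexing, and the solution is unique. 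This gives the recurrence $G^{(n)}=U_n[G^{(n-1)}]+Q_{[1,n]}(z_1,y_n)\,q_n^{z_2-y_n}$ in terms of a single signed integration operator $U_n$ obtained from~\eqref{eq:onestep}. The paper's Case~2 analysis is then packaged into your ``algebraic identity'' $\bar Q_{[j,n]}(s,z_2)=U_n[\bar Q_{[j,n-1]}(s,\cdot)](z_2)+Q_{[j,n]}(s,y_n)q_n^{z_2-y_n}$ for $s>y_n$, proved more slickly by applying $\cdot\circ Q_n^{-1}$ (which annihilates the last term and collapses $U_n$ and $\bar Q_{[j,n]}$ to $\bar Q_{[j,n-1]}$) and then evaluating at the single point $z_2=y_n$, rather than by explicit three-way case checking. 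The remaining step, identifying $A=Q_{[1,n]}(z_1,y_n)$, is a genuine partition-of-paths argument — as you note, every strictly decreasing walk from $z_1$ to $y_n$ in $n$ steps has a well-defined first-exceedance time $\tau'\in\{0,\dots,n-1\}$ against the shortened curve $(y_1,\dots,y_{n-1},-\infty)$, and $Q_{[n,n]}(S_{n-1},y_n)$ vanishes precisely on the extra event $\{\tau=n\}\subset\{\tau'=n-1\}$, so the bookkeeping is consistent. Your approach isolates the structure into cleaner pieces (one operator identity + one combinatorial identity); the paper's keeps $n$ fixed, avoiding the cross-$n$ shift identity at the cost of a heavier case analysis. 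Both work.
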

		\begin{remark}
	The special case of Proposition~\ref{prop:generalG} when $q_j=q$ for all $j$ was proved in~\cite{matetskiQuastelRemenik21}.
	{As pointed out earlier}, their proof relies crucially on the fact that $q^{-z_2}G(z_1,z_2)$ and $\hat{Q}_{[k,n]}(\cdot,z_2)$ (and hence the right hand side of~\eqref{eq:Gfunction_general}) are both polynomials in $z_2$, so one only needs to check the equality for $z_2$ in an infinite subset of $\mathbb{Z}$ (a convenient choice would then be $z_2\leq y_n$).
	{When the parameters $\{q_j\}$ are distinct,} the polynomiality no longer holds. 
\end{remark}	

The proof of Propositon~\ref{prop:generalG} is one of the main technical {novelties of this article} and will be presented in~\S\ref{subsec:proofRWformula}.
Assuming for the moment Proposition \ref{prop:generalG}, we are ready to prove Theorem~\ref{thm:corker_hitting}.
{The crucial additional information in Theorem~\ref{thm:corker_hitting} is a more explicit expression for the correlation kernel, which, in~\eqref{eq:corKer_biorthogonal}, was given implicitly through a biorthogonal relation.
To prove the result, we will first assume that the parameters $\{q_i\}$ satisfy the condition $q_1<q_2<\cdots$ and use the probabilistic representation~\eqref{eq:Gfunction_general}.
Then, we will remove the restriction on the parameters by using analytic continuation.}
\begin{proof}[Proof of Theorem~\ref{thm:corker_hitting}]
Assume first that $q_1<q_2<\cdots$.
Using~\eqref{eq:Gfunction_general}, \eqref{eq:Qhat}, \eqref{eq:Sbar_localOp} and~\eqref{eq:SbarEpi}, we obtain 
\begin{equation}\label{eq:SbarEpi2}
	\begin{aligned}
	&\sum_{{z\in\Z}} G(x,z)\cdot(R_{(r,t]}^*)^{-1}(z,y)\\
	&= 	\sum_{{z\in\Z}} \frac{\mathbb{E}_{S_0=x}\left[\prod_{\ell=\tau+1}^{n}(q_\ell-1)\cdot \bar{Q}_{[\tau+1,n]}(S_\tau,z)\1_{\tau<n}\right]}{\prod_{\ell=1}^{n}(q_\ell-1)}\cdot (R_{(r,t]}^*)^{-1}(z,y)\\
	&= \frac{\mathbb{E}_{S_0=x}\left[\prod_{\ell=\tau+1}^{n}(q_\ell-1) \left(\sum_{{z\in\Z}}\bar{Q}_{[\tau+1,n]}(S_\tau,z)\cdot(R_{(r,t]}^*)^{-1}(z,y)\right)\1_{\tau<n}\right]}{\prod_{\ell=1}^{n}(q_\ell-1)} \\
	&=\frac{\mathbb{E}_{S_0=x}\left[\bar{\mathcal{S}}_{[\tau+1,n],(r,t]}(S_{\tau},y) \1_{\tau<n}\right]}{\prod_{\ell=1}^{n}(q_\ell-1)}
	=\bar{\mathcal{S}}_{[1,n],(r,t]}^{\mathrm{epi}(\bm{y})}(x,y).
	\end{aligned}
\end{equation}

Then, by~\eqref{eq:corKer_biorthogonal}, \eqref{eq: kernel_decomp}, \eqref{eq:Gfunction}, \eqref{eq:S_localOp} and~\eqref{eq:SbarEpi2}, we have  
\begin{align*}
	K(m,x;n,x')&=-Q_{(m,n]}(x,x')\1_{n>m}+\sum_{i=1}^n\Psi_i^{(m)}(x)\cdot \Phi_i^{(n)}(x')\\
	&=-Q_{(m,n]}(x,x')\1_{n>m}+\!\!\!\sum_{{z_1,z_2\in\Z}} \!\!\! Q_{[1,m]}^{-1}\circ R_{(r,t]}^{*}(x,z_1)\cdot G(z_1,z_2)\cdot (R_{(r,t]}^*)^{-1}(z_2,x')\\
	&= -Q_{(m,n]}(x,x')\1_{n>m}+ \mathcal{S}_{[1,m],(r,t]}\circ \bar{\mathcal{S}}_{[1,n],(r,t]}^{\mathrm{epi}(\bm{y})}(x,x').
\end{align*}

Fix now $1\leq k_1<k_2<\cdots<k_m\leq N$ and $(s_1,\dots,s_m)\in {\mathbb{R}^m}$, as in the statement of Theorem~\ref{thm:corker_hitting}.
Take the starting time $r:=0$ and choose the test function
\begin{equation*}
	g\colon \{1,\dots,N\}\times \Z\to\R, \qquad
	g(k,x):=\begin{cases}
		{-\chi_s(k_i,x)},\quad &\text{if }k=k_i\ \text{for some }1\leq i\leq m,\\
		0&\text{otherwise},
	\end{cases}
\end{equation*}
{recalling that $\chi_s(k_i,x):=\1_{x<s_i}$.}
Recall now that $Y_k(t)=x_k^{(k)}$ is the left-most particle of the $k$-th row in the point process $\sfX_N$ of Proposition~\ref{prop:DPPmarginal} and Corollary~\ref{coro:modifiedDPP}.
Therefore, by Proposition~\ref{prop:modifiedCorKer} and the above expression for the kernel, we have 
\begin{align*}
	&\mathbb{P}\left(\bigcap_{i=1}^m\{Y_{k_i}(t)\geq s_i\} \;\middle|\; Y(0)=\bm{y}\right) = \mathbb{E}\left[\prod_{1\leq j{\leq }i\leq N}(1+g(i,x_j^{(i)}))\right] \\
	&= \det(I+gK)_{\ell^2(\{1,\dots,N\}\times \mathbb{Z})}
	= \det(I-\chi_sK\chi_s)_{\ell^2(\{k_1,\dots,k_m\}\times \mathbb{Z})},
\end{align*}
where $K$ is the kernel~\eqref{eq:corker_hitting}.
This proves Theorem~\ref{thm:corker_hitting} for parameters satisfying the condition $q_1<q_2<\cdots$.

Now we extend the result to parameters $q_1,q_2,\dots$ for the most general hypotheses of the theorem.
On the one hand we can write the left hand side of~\eqref{eq:multipoint} as a sum of transition probabilities over suitable configurations:
\begin{equation*}
	\mathbb{P}\left(\bigcap_{i=1}^m\{Y_{k_i}(t)\geq s_i\} \;\middle|\; Y(0)=\bm{y}\right) =\!\!\!\!\sum_{\substack{\bm{x}\in \sfW_N\colon\\ x_{k_i}{+k_i}\geq s_i, \\ 1\leq i\leq m}} \!\!\!\!\!\! \mathcal{Q}_{0,t}({(y_1+1,\dots,y_N+N),(x_1+1,\dots,x_N+N)} ),
\end{equation*}
where $\mathcal{Q}_{0,t}$ given by~\eqref{eq:Q_threeParts} is clearly analytic in $q_i$ for each $i$.
Note that the right-hand side above is a finite sum, since $\mathcal{Q}_{0,t}(\bm{y},\bm{x})= 0$ if $t<x_i-y_i$ or $x_i-y_i<0$ for some $i$.
Hence, $\mathbb{P}\left(\bigcap_{i=1}^m\{Y_{k_i}(t)\geq s_i\} \;\middle|\; Y(0)=\bm{y}\right)$ is analytic in $q_i$ for each $i$.

On the other hand, the kernels $\mathcal{S}_{[j,k],(r,t]}(x,y)$ and $\bar{\mathcal{S}}_{[j,k],(r,t]}(x,y)$ defined through the contour integral representations~\eqref{eq:Sintegral} and~\eqref{eq:Sbarintegral} are clearly analytic in $q_i$ for each $i$.
Moreover, for the geometric random walk $S$ defined in~\eqref{eq:transitionRW}, the hitting time $\tau$ defined in \eqref{eq:hittingTime} and $S_{\tau}$ have joint distribution given by a finite sum:
\begin{equation}\label{eq:jointlaw_hitting}
	\mathbb{P}_{S_0=z_1}(\tau=k, S_{\tau}= z) = \mathbbm{1}_{z>y_{k+1}}\sum_{\substack{z_1=x_0>x_1>\cdots>x_k=z\\ x_i\leq y_{i+1}, \; 0\leq i\leq k-1}} \, \prod_{i=1}^{k}\left((q_i-1)\cdot q_i^{x_i-x_{i-1}}\right),
\end{equation}
which is also analytic in $q_i$ for all $i$.
Thus the kernel $\bar{\mathcal{S}}^{\mathrm{epi}(\bm{y})}_{[1,n],(0,t]}(x,y)$, given by
\begin{align*}
	\bar{\mathcal{S}}_{[1,n],(0,t]}^{\mathrm{epi}(\bm{y})}(x,y)&:= \frac{ \mathbb{E}_{S_0=x}[\bar{\mathcal{S}}_{[\tau+1,n],(0,t]}(S_\tau,y)\1_{\tau<n}]}{\prod_{\ell=1}^{n}(q_\ell-1)}\\
	&=\frac{{\sum_{k=0}^{n-1}\sum_{z}\mathbb{E}_{S_0=x}[\bar{\mathcal{S}}_{[k+1,n],(0,t]}(z,y)\1_{\tau=k,\, S_k=z}]}}{\prod_{\ell=1}^{n}(q_\ell-1)}\\
	&= \frac{\sum_{k=0}^{n-1}\sum_{z}\bar{\mathcal{S}}_{[k+1,n],(0,t]}(z,y)\cdot\mathbb{P}_{S_0=x}(\tau=k,S_{\tau}=z)}{\prod_{\ell=1}^{n}(q_\ell-1)},
\end{align*}
is analytic in $q_i$ for all $i$ (note that the sum over $z$ is a finite sum, as $\mathbb{P}_{S_0=x}(\tau=k,S_{\tau}=z)=0$ if $z\leq y_{k+1}$ or $z>x-k$). Note that the kernel $\bar{\mathcal{S}}_{[1,n],(0,t]}^{\mathrm{epi}(\bm{y})}(x,y)$ is analytic in each $q_i$ also at $q_i=1$, since the normalizing factor $\prod_{\ell=1}^{n}(q_\ell-1)$ in the denominator cancels out with the same factor appearing in $\bar{\mathcal{S}}_{[k+1,n],(0,t]}(z,y)\cdot\mathbb{P}_{S_0=x}(\tau=k,S_{\tau}=z)$ for any $k$ and $z$ (see \eqref{eq:Sbarintegral} and \eqref{eq:jointlaw_hitting}).
Therefore, we conclude that the kernel $K(m,x;n,x')$ defined in~\eqref{eq:corker_hitting} is analytic in $q_i$ for each $i$, and so is the Fredholm determinant $\det(I-\chi_sK\chi_s)$ associated to it. 
To be more precise, we need absolute convergence of the series expansion for the Fredholm determinant, which is guaranteed by the fact that $\chi_sK\chi_s$ is a trace class operator.
The trace class property can be proved in a similar way as in~\cite[Appendix A and B]{matetskiQuastelRemenik21} (there, uniform bounds on the trace norms are obtained for a family of kernels with respect to certain scaling parameters).
The only modification needed amounts to replacing the weight function $e^{t(z-1/2)}(z-1)^{n}z^{-n}$ that appears in~\cite[(2.28) and (2.29)]{matetskiQuastelRemenik21} with $\prod_{\ell=j}^{k}(q_\ell z^{-1}-1)\cdot \prod_{\ell=r+1}^{t}(1+p_\ell z)$ for our discrete-time inhomogeneous setup.
These weight functions, coming from the contour integral representations of $\mathcal{S}_{[j,k],(r,t]}(x,y)$ and $\bar{\mathcal{S}}_{[j,k],(r,t]}(x,y)$, as shown in~\eqref{eq:Sintegral} and~\eqref{eq:Sbarintegral}, are independent of the entries $x$ and $y$ and remain uniformly bounded on the contours, so one can bound the trace norm almost identically as in~\cite{matetskiQuastelRemenik21}; we omit the details.

Now, for fixed parameters $\{p_i\}$, both sides of~\eqref{eq:multipoint} admit analytic continuation to all $q_j$ satisfying $0<q_j<\min\{p_i^{-1}\}$ for all $j$ and they agree for $q_1<q_2<\cdots$, hence they must agree for all $0<q_j<\min\{p_i^{-1}\}$, not necessarily ordered. 
\end{proof}

	\begin{remark}\label{rem:qless1}
	We have already commented in the introduction about the assumption $p_i q_j<1$, which is innocent due to a particle-hole duality.
	The second assumption of the theorem, i.e.\ $q_j>1$ for all $j$, is also innocent, as it can be removed by replacing $p_i$ with $\tilde{p}_i:=q p_i$, and $q_i$ with $\tilde{q}_i:=q_i/q$, {for some choice of $q>0$. 
	Tuning $q$, one can recover any $N$-tuple $(\tilde{q}_1,\dots,\tilde{q}_N)$ of positive parameters.}
	On the other hand, this will not change the jumping rates, {since $\tilde{p}_i \tilde{q}_j = p_iq_j$.
	Therefore}, the Fredholm determinant on the right hand side of~\eqref{eq:multipoint} does not depend on the choice of $q$.
	{This can also be seen from the fact that,} for any two choices of the renormalizing constants $q$ and $q'$, the corresponding kernels $K_{q}$ and $K_{q'}$ are off by a conjugation, which does not affect the Fredholm determinant:
	\begin{equation*}
		K_{q'}(m,x;n,x')= \left(\frac{q'}{q}\right)^{x-x'}K_{q}(m,x;n,x').
	\end{equation*} 
\end{remark}
	
	{
	\begin{example}[Step initial configuration]
		The simplest case in which the random walk hitting kernel $\bar{\mathcal{S}}_{[1,n],(0,t]}^{\mathrm{epi}(\bm{y})}$ can be explicitly written out is the step initial configuration, i.e.\ $y_i=-i$ for $i=1,\dots,N$.
		In this case, if the random walk $S$ starts at $S_0=x\leq y_1$, then it will never hit the strict epigraph of the curve $(i,y_{i+1})_{0\leq i\leq N-1}$.
		This happens because the random walk $S$ moves strictly to the left, hence
		\begin{equation*}
			S_k\leq S_0-k=x-k\leq y_1-k=y_{k+1}
		\end{equation*} 
		for all $k\geq 0$.
		Therefore, in this case, we have $\1_{\tau<n}=\1_{\tau=0}$ and
		\begin{equation}
			\bar{\mathcal{S}}_{[1,n],(0,t]}^{\mathrm{epi}(\bm{y})}(x,y)= \frac{\1_{x>y_1} \bar{\mathcal{S}}_{[1,n],(0,t]}(x,y)}{\prod_{\ell=1}^{n}(q_\ell-1)}.
		\end{equation}
		The correlation kernel for the step initial configuration takes, thus, the form 
		\begin{equation}
			K(m,x;n,x')=- Q_{(m,n]}\1_{n>m} + \sum_{z\geq 0}\mathcal{S}_{[1,m],(0,t]}(x,z)\cdot\frac{\bar{\mathcal{S}}_{[1,n],(0,t]}(z,x')}{\prod_{\ell=1}^{n}(q_\ell-1)}.
		\end{equation}
		Using the contour integral representations~\eqref{eq:Sintegral}-\eqref{eq:Sbarintegral}, it is easy to check that 
		\[
		\begin{aligned}
			K(&m,x;n,x')=- Q_{(m,n]}\1_{n>m} 
			+ \oint_{\Gamma_0}\frac{\mathrm{d}z}{2\pi \mathrm{i}}\oint_{\Gamma_{\bm{q}}}\frac{\mathrm{d}w}{2\pi \mathrm{i}} \frac{1}{z-w} \frac{z^{-x-1}\prod_{\ell=1}^m(q_\ell z^{-1}-1)}{w^{-x'}\prod_{\ell=1}^n(q_\ell w^{-1}-1)} \prod_{\ell=1}^{t}\frac{1+p_\ell z}{1+p_\ell w} ,
		\end{aligned}
		\]
		where $\Gamma_0$ and $\Gamma_{\bm{q}}$ are contours as in Theorem~\ref{thm:corker_hitting}, with the additional property that $|z|<|w|$ for all $z\in\Gamma_0$ and $w\in\Gamma_{\bm{q}}$.
	\end{example}
	}

\subsection{Proof of Proposition~\ref{prop:generalG}} \label{subsec:proofRWformula}

		In this subsection we prove Proposition~\ref{prop:generalG} by induction.
		For induction purposes, we define the following more general kernels:
		\begin{equation}\label{eq:Ggeneral}
			G_{j,k}^{(n)}(z_1,z_2):= \sum_{i=j+1}^{n-k}  Q_{[j+1,i]}(z_1,y_i)\cdot h_{n-i}^n(k,z_2),
		\end{equation}
		where $0\leq k\leq n-1$ and $0\leq j\leq n-k-1$.
		Then, by~\eqref{eq:Gfunction}, we have
		\begin{equation*}
			G(z_1,z_2) = G^{(n)}_{0,0}(z_1,z_2).
		\end{equation*}
		We will prove the following generalization of Proposition~\ref{prop:generalG}:
		\begin{equation}\label{eq:Ginduction}
			G_{j,k}^{(n)}(z_1,z_2) = \frac{\mathbb{E}_{S_{j}=z_1}\left[\hat{Q}_{[\tau^{j}+1,n-k]}(S_{\tau^{j}},z_2)\1_{\tau^{j}<n-k}\right]}{\prod_{\ell=j+1}^{n-k}(q_\ell-1)},
		\end{equation}
		for all $z_1,z_2\in \mathbb{Z}$, $0\leq k\leq n-1$ and $0\leq j\leq n-k-1$, where $S$ is the geometric random walk defined in~\eqref{eq:transitionRW} and
		\begin{equation}
		\label{eq:hittingTime_general}
			\tau^{j}:= \min\{m\in \{j,\dots,n\}: S_m>y_{m+1}\}.
		\end{equation}
		Notice that $\tau^0=\tau$, with $\tau$ defined in~\eqref{eq:hittingTime}.
		To prove~\eqref{eq:Ginduction}, we use a backward induction on $k$ and $j$.
		
		{For any $0\leq k\leq n-1$ and $j=n-k-1$, we have 
		\[
		\begin{aligned}
			G_{j,k}^{(n)}(z_1,z_2)&= Q_{n-k}(z_1,y_{n-k})\cdot h_{k}^{n}(k,z_2) \\
			&= \1_{z_1>y_{n-k}}\cdot q_{n-k}^{z_2-z_1} \\
			&= \P_{S_{n-k-1}=z_1}(\tau^{n-k-1}=n-k-1)\cdot \bar{Q}_{[n-k,n-k]}(z_1,z_2) \\
			&= \frac{\mathbb{E}_{S_{n-k-1}=z_1}\left[\hat{Q}_{[\tau^{n-k-1}+1,n-k]} (S_{\tau^{n-k-1}},z_2)\1_{\tau^{n-k-1}<n-k}\right]}{q_{n-k}-1},
		\end{aligned}
		\]
		{where the first equality follows from~\eqref{eq:Ggeneral}, the second equality from~\eqref{eq:Q} and~\eqref{eq:BVP3}, the third from~\eqref{eq:hittingTime_general} and~\eqref{eq:Qbar_j=k}, and the fourth from~\eqref{eq:Qhat}.}
		This proves~\eqref{eq:Ginduction} for $0\leq k\leq n-1$ and $j=n-k-1$.
		In particular, \eqref{eq:Ginduction} is proven for $k=n-1$ and $0\leq j\leq n-k-1$.}

		{Assume now by induction that, for some $0\leq \ell\leq n-2$, \eqref{eq:Ginduction} holds for all $k= \ell+1$ and $0\leq j\leq n-\ell-2$.
		We will show that~\eqref{eq:Ginduction} holds for $k=\ell$ and for all $0\leq j\leq n-\ell-1$, proceeding with a backward induction on $j$.
		We have already proven above the base case $k=\ell$ and $j=n-\ell-1$.}
		Assume that, for some $0\leq m\leq n-\ell-2$, \eqref{eq:Ginduction} holds for $k=\ell$ and $j= m+1$.
		We need to show~\eqref{eq:Ginduction} for $k=\ell$ and $j=m$ and, to do so, we will consider {various} cases separately.
		
		\medskip
		\noindent\textbf{Case 1}: $z_1\leq y_{m+1}$.
		In this case we have $Q_{m+1}(z_1,y_{m+1})=0$, hence by~\eqref{eq:Ggeneral} we have
		\begin{equation*}
		\begin{aligned}
			G_{m,\ell}^{(n)}(z_1,z_2)
			&=  \sum_{i=m+2}^{n-\ell}  Q_{[m+1,i]}(z_1,y_i)\cdot h_{n-i}^n(\ell,z_2)\\
			&= \sum_{y<z_1} Q_{m+1}(z_1,y)\left(\sum_{i=m+2}^{n-\ell} Q_{[m+2,i]}(y,y_i)\cdot h_{n-i}^n(\ell,z_2)\right) \\
			&= \sum_{y<z_1} \frac{\mathbb{P}_{S_{m}=z_1}[S_{m+1}=y]}{q_{m+1}-1} \cdot \frac{\mathbb{E}_{S_{m+1}=y}\left[\hat{Q}_{[\tau^{m+1}+1,n-\ell]}(S_{\tau^{m+1}},z_2)\1_{\tau^{m+1}<n-\ell}\right]}{\prod_{j=m+2}^{n-\ell}(q_j-1)},
		\end{aligned}
		\end{equation*}
		where the latter equality follows from~\eqref{eq:transitionRW} and the induction hypothesis (with $k=\ell$ and $j=m+1$).
		{Factoring out the normalization constants and using the Markov property of the random walk $S$, we obtain
		\[
			G_{m,\ell}^{(n)}(z_1,z_2)
			= \frac{\mathbb{E}_{S_{m}=z_1}\left[\hat{Q}_{[\tau^{m+1}+1,n-\ell]}(S_{\tau^{m+1}},z_2)\1_{\tau^{m+1}<n-\ell}\right]}{\prod_{j=m+1}^{n-\ell}(q_j-1)} .
		\]}
		Note now that, for $z_1\leq y_{m+1}$,
		\begin{align*}
			\tau^{m+1}\1_{S_m=z_1} = \tau^{m}\1_{S_m=z_1},
		\end{align*} 
		since the only situation when $\tau^{m}\neq \tau^{m+1}$ is $\tau^{m}=m$, which cannot happen if $S_m=z_1\leq y_{m+1}$.
		{Thus, for $z_1\leq y_{m+1}$, we have 
		\[
			G_{m,\ell}^{(n)}(z_1,z_2)
			= \frac{\mathbb{E}_{S_{m}=z_1}\left[\hat{Q}_{[\tau^{m}+1,n-\ell]}(S_{\tau^{m}},z_2)\1_{\tau^{m}<n-\ell}\right]}{\prod_{j=m+1}^{n-\ell}(q_j-1)} ,
		\]
		which proves~\eqref{eq:Ginduction} for $k=\ell$ and $j=m$ in Case~1.}
		
		\medskip
		\noindent\textbf{Case 2}: $z_1>y_{m+1}$.
		In this case, we have:
		\begin{equation}\label{eq:hitfirst}
		{\text{If}\quad S_m=z_1, \qquad\text{then}\quad \tau^m=m,}
		\end{equation}
		{since $z_1>y_{m+1}$.}
		Therefore, {recalling that $m\leq n-\ell-2$,} the right hand side of~\eqref{eq:Ginduction} for $k=\ell$ and $j=m$ reduces to $\bar Q_{[m+1,n-\ell]}(z_1,z_2)$ and it suffices to prove that
		\begin{equation}\label{eq:Ginduction_reduced}
			G_{m,\ell}^{(n)}(z_1,z_2)= \bar Q_{[m+1,n-\ell]}(z_1,z_2).
		\end{equation}
		We now check~\eqref{eq:Ginduction_reduced} in three distinct subcases, using the recurrence relation~\eqref{eq:onestep}.
		
		\medskip
		\noindent \textbf{Case 2.1}: $z_2\leq y_{n-\ell}$.
		Then, by the first recurrence relation in~\eqref{eq:onestep}, we have
		\begin{equation*}
			h_{n-i}^n(\ell,z_2) = \sum_{y=z_2+1}^{y_{n-\ell}} q_{n-\ell}^{z_2-y}\cdot h_{n-i}^n(\ell+1,y),
		\end{equation*}
		for any $m+1\leq i\leq n-\ell-1$.
		Using the latter equality and~\eqref{eq:BVP3}, we obtain
		\begin{equation}
		\label{eq:Gcase2.1}
		\begin{split}
			&G_{m,\ell}^{(n)}(z_1,z_2) = \sum_{i=m+1}^{n-\ell} Q_{[m+1,i]}(z_1,y_i) \cdot h_{n-i}^n(\ell,z_2) \\
			&= \sum_{i=m+1}^{n-\ell-1} Q_{[m+1,i]}(z_1,y_i) \left(\sum_{y=z_2+1}^{y_{n-\ell}}q_{n-\ell}^{z_2-y}\cdot h_{n-i}^n(\ell+1,y)\right) + Q_{[m+1,n-\ell]}(z_1,y_{n-\ell})\cdot h_{\ell}^n(\ell,z_2) \\
			&=\sum_{y=z_2+1}^{y_{n-\ell}}q_{n-\ell}^{z_2-y}\cdot \left(\sum_{i=m+1}^{n-\ell-1} Q_{[m+1,i]}(z_1,y_i)\cdot h_{n-i}^n(\ell+1,y)\right) + Q_{[m+1,n-\ell]}(z_1,y_{n-\ell})\cdot q_{n-\ell}^{z_2-y_{n-\ell}}.
		\end{split}
		\end{equation}
		We recognize the sum inside the big parentheses in the last line above to be $G_{m,\ell+1}^{(n)}(z_1,y)$.
		Applying the induction hypothesis with $k=\ell+1$ and $j=m\leq n-(\ell+1)-1$, we may rewrite it as
		\begin{align*}
			\sum_{i=m+1}^{n-\ell-1} Q_{[m+1,i]}(z_1,y_i)\cdot h_{n-i}^n(\ell+1,y)
			&= \frac{\mathbb{E}_{S_m=z_1}[\hat{Q}_{[\tau^{m}+1,n-\ell-1]}(S_{\tau^m},y)\1_{\tau^{m}<n-\ell-1}]}{\prod_{j=m+1}^{n-\ell-1}(q_j-1)}\\
			&= \bar{Q}_{[m+1,n-\ell-1]}(z_1,y),
		\end{align*}
		where the latter equality follows from~\eqref{eq:hitfirst} {and the fact that $m\leq n-\ell-2$.
		Notice now that, for all $z_2< y\leq y_{n-\ell}$, by the assumptions corresponding to Case~2 and Case~2.1, we have $z_2 < y\leq y_{n-\ell}\leq y_{m+1}<z_1$.
		Therefore, by~\eqref{eq:Qbar}, we have $\bar{Q}_{[m+1,n-\ell-1]}(z_1,y)= Q_{[m+1,n-\ell-1]}(z_1,y)$ and $\bar{Q}_{[m+1,n-\ell]}(z_1,z_2)= Q_{[m+1,n-\ell]}(z_1,z_2)$.
		It then follows from~\eqref{eq:Gcase2.1} that
		\[
		\begin{aligned}
		G_{m,\ell}^{(n)}(z_1,z_2)
		&=\sum_{y=z_2+1}^{y_{n-\ell}}q_{n-\ell}^{z_2-y}\cdot Q_{[m+1,n-\ell-1]}(z_1,y) + \sum_{y_{n-\ell}<y<z_1}q_{n-\ell}^{z_2-y}\cdot Q_{[m+1,n-\ell-1]}(z_1,y). \\
		&= \sum_{z_2< y<z_1} q_{n-\ell}^{z_2-y}\cdot Q_{[m+1,n-\ell-1]}(z_1,y)
		= Q_{[m+1,n-\ell]}(z_1,z_2)
		= \bar{Q}_{[m+1,n-\ell]}(z_1,z_2),
		\end{aligned}
		\]
		which proves~\eqref{eq:Ginduction_reduced} in Case~2.1.}
		
		\medskip
		\noindent \textbf{Case 2.2}: $y_{n-\ell}<z_2<z_1$.
		The computation is similar to the one in Case~2.1, except that we need to use the second recurrence relation for $h_{n-i}^n(\ell,z_2)$ in~\eqref{eq:onestep}, i.e.\
		\begin{equation*}
			h_{n-i}^{n}(\ell,z_2) = -\sum_{y=y_{n-\ell}+1}^{z_2}q_{n-\ell}^{z_2-y}\cdot h_{n-i}^n(\ell+1,y),
		\end{equation*}
		since $z_2>y_{n-\ell}$.
		{Following a similar computation as in Case~2.1, we arrive at
		\begin{equation}
		\label{eq:Gcase2.2}
		\begin{aligned}
			G_{m,\ell}^{(n)}(z_1,z_2)
			=-\!\!\!\!\sum_{y=y_{n-\ell}+1}^{z_2} \!\!\!\! q_{n-\ell}^{z_2-y}\cdot \bar{Q}_{[m+1,n-\ell-1]}(z_1,y) + Q_{[m+1,n-\ell]}(z_1,y_{n-\ell})\cdot q_{n-\ell}^{z_2-y_{n-\ell}}.
		\end{aligned}
		\end{equation}		
		Notice now that, for all $y_{n-\ell}< y\leq z_2 $, by the assumptions corresponding to Case~2 and Case~2.2, we have $y_{n-\ell}< y\leq z_2<z_1$.
		Therefore, by~\eqref{eq:Qbar}, we have $\bar{Q}_{[m+1,n-\ell-1]}(z_1,y)= Q_{[m+1,n-\ell-1]}(z_1,y)$ and $\bar{Q}_{[m+1,n-\ell]}(z_1,z_2)= Q_{[m+1,n-\ell]}(z_1,z_2)$, as in Case~2.1.
		We then deduce that
		\[
		\begin{aligned}
		G_{m,\ell}^{(n)}(z_1,z_2)
		&=-\sum_{y=y_{n-\ell}+1}^{z_2}q_{n-\ell}^{z_2-y}\cdot Q_{[m+1,n-\ell-1]}(z_1,y) + \sum_{y_{n-\ell}<y<z_1}q_{n-\ell}^{z_2-y}\cdot Q_{[m+1,n-\ell-1]}(z_1,y). \\
		&= \sum_{z_2<y<z_1}q_{n-\ell}^{z_2-y}\cdot Q_{[m+1,n-\ell-1]}(z_1,y)
		= Q_{[m+1,n-\ell]}(z_1,z_2)
		= \bar{Q}_{[m+1,n-\ell]}(z_1,z_2),
		\end{aligned}
		\]
		which proves~\eqref{eq:Ginduction_reduced} in Case~2.2.}
		
		\medskip
		\noindent\textbf{Case 2.3}: $z_2\geq z_1$.
		{Given the assumptions corresponding to Case~2 and Case~2.3, we now have $z_2\geq z_1 > y_{m+1}\geq y_{n-\ell}$.
		Therefore, similarly to~Case~2.2, we apply the second recurrence relation for $h_{n-i}^{n}(\ell,z_2)$ in~\eqref{eq:onestep} and arrive at~\eqref{eq:Gcase2.2}.
		However, this time, $\bar{Q}_{[m+1,n-\ell-1]}(z_1,y)$ takes different forms for $z_1>y>y_{n-\ell}$ and $z_2\geq y\geq z_1$.
		We then split the sum over $y$ in~\eqref{eq:Gcase2.2} accordingly and compute
		\begin{align*}
		\begin{split}
			&\sum_{y=y_{n-\ell}+1}^{z_2}q_{n-\ell}^{z_2-y}\cdot \bar{Q}_{[m+1,n-\ell-1]}(z_1,y)\\
			&= \sum_{z_1>y>y_{n-\ell}}q_{n-\ell}^{z_2-y}\cdot  Q_{[m+1,n-\ell-1]}(z_1,y) + (-1)^{n-\ell-m-2}\sum_{z_2\geq y\geq z_1} q_{n-\ell}^{z_2-y}\cdot Q^{\dagger}_{[m+1,n-\ell-1]}(z_1,y) \\
			&= Q_{[m+1,n-\ell]}(z_1,y_{n-\ell}) \cdot q_{n-\ell}^{z_2-y_{n-\ell}} - (-1)^{n-\ell-m-1}Q^{\dagger}_{[m+1,n-\ell]}(z_1,z_2) ,
			\end{split}
		\end{align*}
		where the latter equality is due to~\eqref{eq:Q} and~\eqref{eq:Qdagger}.
		Combining this with~\eqref{eq:Gcase2.2}, after a cancellation, we obtain
		\[
		G^{(n)}_{m,\ell}(z_1,z_2)
		= (-1)^{n-\ell-(m+1)} \cdot Q^{\dagger}_{[m+1,n-\ell]}(z_1,z_2)
		= \bar{Q}_{[m+1,n-\ell]}(z_1,z_2),
		\]
		where the latter equality follows from~\eqref{eq:Qbar} and the assumption $z_2\geq z_1$.
		This proves~\eqref{eq:Ginduction_reduced} in Case~2.3 and, thus, completes the proof of Proposition~\ref{prop:generalG}.}
		
	\vskip 4mm
	\noindent
	{\bf Acknowledgements}.
	We thank Konstantin Matetski, Daniel Remenik and Travis Scrimshaw for helpful comments on an earlier version of this manuscript.
	The work of E.B.\ was supported by the ERC grant 669306.
	The work of Y.L., A.S.\ and N.Z.\ was supported by the EPSRC grant EP/R024456/1.
	N.Z.\ also acknowledges hospitality and support from the Galileo Galilei Institute and from the scientific program ``Randomness, Integrability, and Universality'', during which work on this project was conducted.
	\vskip 2mm
	\noindent
	{\bf Declarations}: The authors have no relevant financial or non-financial interests to disclose. The manuscript has no associated data.

\printbibliography

\end{document}